\numberwithin{equation}{section}
\newcommand{\N}{\mathbb{N}}
\newcommand{\E}{\mathbb{E}}
\renewcommand{\H}{\mathbb H}
\renewcommand{\P}{\mathbb{P}}
\newcommand{\bi}{\mathbf{i}}
\newcommand{\bs}{\mathbf s}
\newcommand{\LL}{\mathcal L}
\renewcommand{\P}{\mathbb P}
\newcommand{\FFF}{\mathcal F}
\newcommand{\<}{\langle}
\renewcommand{\>}{\rangle}
\newcounter{RomanNumber}
\newtheorem{tm}{Theorem}[section]
\newtheorem{df}{Definition}[section]
\newtheorem{lm}{Lemma}[section]
\newtheorem{prop}{Proposition}[section]
\newtheorem{cor}{Corollary}[section]
\newtheorem{rk}{Remark}[section]
\newtheorem{ap}{Assumption}[section]
\begin{document}

\title[Explicit numerical methods for SNLSEs]
{Explicit Numerical Methods for High Dimensional Stochastic Nonlinear Schr\"odinger Equation: Divergence, Regularity and Convergence}

%    Remove any unused author tags.
\author{Jianbo Cui}
\address{Department of Applied Mathematics, The Hong Kong Polytechnic University, Hung Hom, Hong Kong}
\curraddr{}
\email{jianbo.cui@polyu.edu.hk}
\thanks{The research of is partially supported by start-up funds (P0039016) from Hong Kong Polytechnic University and the CAS AMSS-PolyU Joint Laboratory of Applied Mathematics.}
%    author one information

%    author one information

%    author one information

%    author two information
\subjclass[2010]{Primary 60H35; Secondary  35Q55, 60H15, 65M12, 65C30, 65P10}

\keywords{stochastic nonlinear Schr\"odinger equation, explicit numerical method, divergence, regularity estimate, tail estimate, strong convergence.}

\date{\today}

\dedicatory{}

\begin{abstract}
This paper focuses on the construction and analysis of explicit numerical methods of high dimensional stochastic nonlinear Schr\"odinger equations (SNLSEs). 
We first prove that the classical explicit numerical methods are unstable and suffer from the numerical  divergence phenomenon.  Then we propose a kind of explicit splitting numerical methods and prove that the structure-preserving splitting strategy is able to enhance the numerical stability.  Furthermore, we establish the regularity analysis and strong convergence analysis of the proposed schemes for SNLSEs based on two key ingredients.  One ingredient is proving new regularity estimates
of SNLSEs by constructing a logarithmic auxiliary functional and exploiting the Bourgain space.  Another one is providing a dedicated error decomposition formula and a novel truncated stochastic Gronwall's lemma, which relies on the tail estimates of underlying stochastic processes.
In particular, our result answers the strong convergence problem of numerical methods for 2D SNLSEs emerged from [C. Chen, J. Hong and A. Prohl, Stoch. Partial Differ. Equ. Anal. Comput. 4 (2016), no. 2, 274–318] and  [J. Cui and J. Hong,  SIAM J. Numer. Anal. 56 (2018), no. 4, 2045–2069].
\end{abstract}

\maketitle

%\tableofcontents

\section{Introduction}
\label{int}
Consider the following SNLSEs which model the propagation of nonlinear dispersive waves in an inhomogeneous or random media (see e.g. \cite{MR1425880,BCI95,FKLT01} and references therein): 
\begin{align}\label{SNLS}
d u&=\bi \Delta u dt+ f(u) dt-\frac 12 \alpha u dt+g(u) dW(t),\\\nonumber 
u(0)&=\varPsi,
\end{align}
where $ f(\xi)=\bi \lambda |\xi|^{2\sigma}\xi$ with $\sigma>0$ and the parameter $\lambda=1$ and $-1$ corresponding to the focusing case and defocusing case in physics, respectively. Here the spatial domain $\mathcal O$ is either a bounded Lipschitz domain in $\mathbb R^d, d\le 2$ equipped with a suitable boundary condition (such as the homogeneous Dirichlet or Neumann boundary condition) or a compact Riemannian manifold of $d\ge 2$ without boundary. 
The operator $g$ is either the Nemytskii operator of the constant function, which is related to the additive noise case, or the Nemytskii operator of $\bi \xi$ which corresponds to the multiplicative noise case. The diffusion term $g(u) dW(t)$ represents the fluctuation effect of a physical process in the complex media \cite{BD03}, where $\{W(t)\}_{t\ge 0}$ is an $L^2(\mathcal O;\mathbb C)$-valued $Q$-Wiener process on a stochastic basis $(\Omega,\mathcal F, \{\mathcal F_t\}_{t\ge 0},\mathbb P)$. This implies that there exists an orthonormal basis $\{e_j\}_{j\in \mathbb N}$ of   $L^2(\mathcal O;\mathbb C)$ and a sequence of mutually independent Brownian motions $\{\beta_j\}_{j\in \mathbb N}$ such that $W(t,x)=\sum\limits_{j} Q^{\frac 12}e_j(x)\beta_j(t).$
The real-valued function $\alpha(x), x\in\mathbb R$, which measures the damping effect during the propagation of waves over long distance, is set to be smooth enough and may depend on the covariance operator of $W$ \cite{CH17}. 
 
SNLSEs have been investigated both theoretically and numerically in the recent decades.
For the well-posedness in $L^2(\mathcal O;\mathbb C)$ and $H^1(\mathcal O;\mathbb C)$ and  the global and asymptotic behaviors of SNLSEs, we refer to \cite{BD03,MR3232027,MR3980316,CHS18b,CS21} and references therein.  Among them, the numerical approximation turns out to be a useful and important tool (see, e.g., \cite{DD02a, MR4333509}) since it is impossible to find the analytic solution of  Eq. \eqref{SNLS} in general. 
For SNLSEs with Lipschitz and smooth coefficients, there exist fruitful numerical results  on the stability and strong convergence (see, e.g., \cite{BD06,AC18}), and on the structure-preserving properties and longtime dynamic behaviors (see, e.g., \cite{JWH13,CH16}). 
However, the basic mathematical mechanism, such as the stability and strong convergence, of numerical methods for SNLSEs with non-monotone coefficients, like \eqref{SNLS}, has not been completely understood. To deal with the strong nonlinearity in SNLSEs, 
many authors use the stopping time techniques and truncated SNLSEs to consider the convergence rates of numerical methods in probability or in pathwise sense (see, e.g., \cite{BD06,Liu13,CHP16}) which is weaker than the strong one.  Some progress has been achieved by studying exponential integrability of  exact and numerical solutions (see, e.g., \cite{CHL16b,CH17,CHLZ19,BC20}).  For 1D stochastic cubic Schr\"odinger equation, the authors in \cite{CHL16b,CH17,CHLZ19} derive the optimal strong and weak convergence rates of a kind of temporal splitting Crank-Nicolson schemes and their full discretizations. 
Nevertheless, the convergence problem of numerical methods for SNLSEs with general polynomial nonlinearity in higher dimensions remains open (see, e.g., \cite{CHP16,Liu13,CH17}), which is one main motivation of this work. 
  
Another motivation lies on the divergence phenomenon and instability of explicit numerical methods for stochastic differential equations (SDEs) with superlinear coefficients \cite{MR2795791}. It has been also shown that for stochastic partial differential equations (SPDEs) of parabolic type with polynomial nonlinearities, the moments of exponential and linear-implicit Euler method are divergent \cite{BHJKLS19}.       
Naturally, we are inspired to ask whether the divergence phenomenon of explicit numerical schemes also exists for SPDEs of hyperbolic type.  This finding may be used to explain why the structure-preserving strategy, the truncated or tamed technique, as well as the adaptive method, are needed and important in designing numerical schemes for SPDEs of hyperbolic type with superlinear coefficients. 
%When $g(\xi)=I,$ the driving Q-Wiener process could be complex-valued. When $g(\xi)=\bi \xi$, the driving Q-Wiener process is restricted to be real-valued.  
%We would to like to remark that all the strong convergence results without sharp convergence rate still hold when the driving Q-Wiener process is complex-valued in multiplicative nose.    

In this paper, we show the divergence of several explicit temporal numerical schemes, which include the classical exponential Euler method, for Eq. \eqref{SNLS}, based on the property that the double-exponent moment of the Wiener process is infinite  (see Section 3). To overcome the divergence issue, we use the structure-preserving idea to construct a kind of explicit numerical methods by some suitable spatial discretizations. 
Via the Lie--Trotter splitting technique, we propose the following explicit structure-preserving splitting scheme, which is unconditionally stable in $L^{p}(\Omega;\mathbb H), p\in \mathbb R$ (see Proposition \ref{Sta1}),
\begin{align*}
u_{n+1}^M&=\Phi_{S,n}^M(\delta t, \Phi_{D,n}^{M}(\delta t, u_n^M)),
\end{align*}
where $\Phi_{D,n}^{M},\Phi_{S,n}^{M}, t\in [t_n,t_{n+1}]$ are the phase flows of the following subsystems 
\begin{align*}
&dv_{D,n}^M(t)=\bi \Delta  v_{D,n}^M (t)dt, \; v_{D,n}^M(t_n)= u^M_{n}, u_{S,n}(t_n)=v_{D,n}^M(t_n), \\
&du_{S,n}^M(t)=P^M\Big(\bi \lambda |u_{S,n}^M(t_n)|^{2\sigma}u_{S,n}^M(t)-\frac 12\alpha u_{S,n}^M(t)\Big)dt+ P^M g(u_{S,n}^M(t)) dW(t).
\end{align*}  
Here $u_0=\varPsi$, $\delta t$ is the time stepsize, $\delta_n W:=W(t_{n+1})-W(t_n), n\le N-1, N \delta t=T, $ $M$ is the parameter of the spectral Galerkin projection operator $P^M$.   
One can also use the truncated strategy to construct stable explicit numerical schemes, such as the nonlinearity-truncated exponential Euler method,
{\small 
\begin{align*}
u_{n+1}^M=S(\delta t) P^M u_n^M+\mathbb I_{\{\|u_{n}^M\|_{\mathbb H^{\kappa}}\le R \}}S(\delta t) P^M (f(u_n^M)-\frac 12 \alpha u_n^M)\delta t +S(\delta t) P^M g(u_{n}^M)\delta_n W,
\end{align*}
}where $S(t)=\exp(\bi \Delta t)$, $R$ is the truncated number and $\kappa \ge 0$ is the Sobolev index in the truncated function. We would like to remark that the spatial discretization can be also chosen as  the finite element methods, the finite difference method, as well as other methods with suitable inverse inequalities.

In the second part of this work, we are interested in the strong convergence of the proposed numerical methods for Eq. \eqref{SNLS}. At the outset of the convergence problem, there is no a higher Sobolev regularity estimate, i.e., $\mathbb H^{\bs}$-estimate with $\bs>1$, of both exact  and numerical solutions  for Eq. \eqref{SNLS} in $d\ge 2$.  (see, e.g.,  \cite[Remarks 2 and 3]{CHP16}).  Up to now, the existing results on higher regularity estimates for the exact or numerical solution of SNLSEs are only limited to 1D case. To solve the regularity problems, we introduce the logarithmic Sobolev inequality to construct a new Lyapunov functional and establish the higher regularity estimate for 2D SNLSEs. For SNLSEs on a compact Riemannian manifold of $d\ge 2$, by imposing additional assumptions on $W$, we exploit the Bourgain space and its nonlinear estimate (see, e.g., \cite{BGT05}) to show the desirable higher regularity almost surely (see Section 2). With regard to the numerical solution, we prove that there always exists a relationship between $\delta t$ and the eigenvalue  $\lambda_M$ of the Laplacian operator such that the splitting numerical method could inherit the regularity of SNLSEs (see Section 4).  

Another bottleneck problem in strong convergence analysis for SPDEs with non-monotone coefficients, including Eq. \eqref{SNLS}, is the lack of a systematic way to transform the pointwise nonlinear error estimate into the desirable strong error estimate. Several attempts have been made on this topic in the recent years. For instance, the approach based on the exponential integrability and stochastic Gronwall's inequalities of the exact and numerical solutions shows the ability to deal with a large class of non-monotone stochastic ordinary differential equations \cite{HJ14}. In contrast, several restrictions on the dimension and nonlinearity have to be imposed for non-monotone SPDEs via this approach.  To tackle these issues, we provide a new error decomposition formula which relies on the tail estimates of the underlying stochastic processes (see Section 5).  These tail estimates are naturally obtained via the established higher regularity of numerical and exact solutions. As a consequence, we prove the strong convergence of the proposed numerical methods for SNLSEs, including 1D SNLSs (Theorem \ref{tm-1d}), 2D stochastic cubic  Schr\"odinger equation (Theorem \ref{tm-2d}), and higher dimensional cubic Schr\"odinger equations with random coefficients (Theorem \ref{3d-tm}).
 In particular, our result gives a positive answer to the open problem on the strong convergence of 2D SNLSEs emerged from \cite{CHP16,Liu13,CH17}.  
The proposed approach also has a potential to be extended to deal with the convergence of numerical schemes for other non-monotone SPDEs, such as the stochastic nonlinear wave equation, stochastic Korteweg--De Vries equation and stochastic Burgers equation, and this will be investigated in the future.

To conclude, the main contributions of this paper are summarized as follows:
\begin{itemize}
\item  We show the divergence of explicit numerical methods for SNLSEs, and prove that 
the splitting strategy is able to enhance the stability and regularity of the numerical solution.

\item  We provide a generic approach to study the strong convergence of  numerical methods for SDEs with non-monotone coefficients based on the tail estimates and a truncated stochastic Gronwall's lemma.

\item  We establish the convergence analysis of the proposed explicit splitting numerical methods via proving new $\mathbb H^2$-regularity of stochastic nonlinear Schr\"odinger equation in high dimensions.
\end{itemize}

%The symplectic method for stochastic Hamiltonian PDEs has been studied (see e.g. \cite{}) for several years. Despite fruitful results on preserving the symplectic and multi-symplectic structure (see e.g. \cite{}),  the strong convergence of symplectic methods for stochastic Hamiltonian PDEs with non-global Lipschitz coefficient is still unclear even though there exists some convergence results on energy-preserving methods recently (\cite{}). 

\section{SNLSEs: Higher regularity estimates and tail estimates}
\label{high-reg-tail}

In this section, we present the basic notations and definitions, and some new properties for SNLSEs, including the higher Sobolev regularity estimates and tail estimates of 1D SNLSEs with general polynomial nonlinearities, $\mathbb H^{2}$-regularity estimates and tail estimates for 2D stochastic cubic  Schr\"odinger equation and  defocusing cubic Schr\"odinger equations with random coefficients on a compact Riemannian manifold. Throughout this paper, we use $C$ to denote a generic constant which is independent of $\delta t, M, R$ and may differ from line to line.

\subsection{Preliminaries} 
In this part, we introduce some frequently used notations and assumptions in the study of SNLSEs. The norm of $\mathbb H:=L^2(\mathcal O;\mathbb C)$ is denoted by  $\|\cdot\|$ and  the inner product $\<\cdot, \cdot\>$ is defined by $\<v,w\>:=Re \int_{\mathcal O} \bar v(x) w(x) dx$ for $v,w\in  \mathbb H.$ We denote $L^p:=L^p(\mathcal O;\mathbb C)$ and $W^{k,p}:=W^{k,p}(\mathcal O;\mathbb C), k\in \mathbb R, p\ge 1.$ When $p=2,$ $W^{k,2}$ is denoted by $H^{k}.$ We denote  the interpolation Sobolev space of the corresponding Laplacian operator on $\mathcal O$ by $\mathbb H^{\gamma}, \gamma \in \mathbb R$ and the corresponding eigenvalue by $\{\lambda_i\}_{i\in \mathbb N}.$ 
An operator $\Psi \in \mathcal L_2^{\bs}$ with $\bs\in \mathbb N$ if  $\|Q^{\frac 12}\|_{\mathcal L_2^{\bs}}:=\sum_{i}\|Q^{\frac 12}e_i\|^2_{\mathbb H^{\bs}}<\infty,$
where $\{e_i\}_{i\in \mathbb N}$ is any orthonormal basis of $\mathbb H$. For simplicity, we assume that $\sup\limits_{i}\|e_i\|_{L^{\infty}}<\infty.$ 
The length of the time interval is a positive number $T>0$. 

Let us briefly recall the previous well-posedness result of SNLSEs before we state our main assumptions. 
The mild solution of Eq. \eqref{SNLS} is defined by the following integral equation
\begin{align*}
u(t)&=S(t)u_0+\int_{0}^tS(t-s)f(u(s))ds-\int_{0}^tS(t-s)\frac 12 \alpha u(s)ds\\
&\quad+\int_{0}^tS(t-s)g (u(s))dW(s), \; a.s.,
\end{align*}
where $S(t):=\exp(\bi \Delta t)$ is the group generated by $\bi \Delta$. 
For the global well-posedness of the mild solution of Eq. \eqref{SNLS}, we refer to, e.g., \cite{BD03}, in the subcritical case, i.e., $\sigma<\frac {2}{(d-2)^+}$ if $\lambda=-1$ and $\sigma<\frac {2}d$ if $\lambda=1$. It is also known that in the additive noise case $(g(\xi)=1)$, the mass and energy conservation laws fail and that in  multiplicative noise $(g(\xi)=\bi \xi),$ the mass conservation law holds if $W(\cdot)$ is $L^2(\mathcal O;\mathbb R)$-valued and $\alpha=\sum\limits_{i} |Q^{\frac 12}e_i|^2$  (see e.g. \cite{BD03}).  We are also interested in the multiplicative focusing critical case ($g(\xi)=\bi \xi, \lambda=1, \sigma=\frac 2d$)  where the initial value is required to satisfy $\|\varPsi\|<C_{Thr},$ where $C_{Thr}$ is the $L^2$-norm of the ground state solution of the elliptic equation $\Delta v-v+v^{2\sigma+1}=0$ (see e.g. \cite{MR691044,CHS18b}). 
Among these works, the evolution of the energy $$H(w):=\frac 12 \|\nabla w\|^2+\frac 1{2\sigma+2}\|w\|_{L^{2\sigma+2}}^{2\sigma+2}$$ plays a key role.

We will assume that the initial value $\varPsi$ is deterministic unless it is necessary to avoid confusion.
To simplify the presentation, we impose the following conditions on the diffusion term and nonlinearity which will be frequently used in this section and section 4.

\begin{ap}\label{add}
Let $g(\xi)=1$, $\alpha=0$, $\sigma\in \mathbb N^+$, $\varPsi\in \mathbb H^{\bs}$ and $Q^{\frac 12}\in \mathcal L_2^{\bs}$ for some $\bs\in \mathbb N$. 
Suppose that one of the following condition holds,
\begin{enumerate}[(i)]
\item when $d=1$, it holds that $\sigma=1$ if $\lambda=1$ and $\sigma\in \mathbb N^+$ if $\lambda=-1.$
\item when $d=2$, it holds that $\sigma=1$ and $\lambda=-1$.
\end{enumerate}
\end{ap}

\begin{ap}\label{mul}
Let $g(\xi)=\bi \xi,$ $\alpha=\sum_{i} |Q^{\frac 12}e_i|^2$, $\sigma\in \mathbb N^+,$ $\varPsi\in \mathbb H^{\bs}$, $W(\cdot)$ be $L^2(\mathcal O;\mathbb R)$-valued and $Q^{\frac 12}\in \mathcal L_2^{\bs}$ satisfy $\sum_{i} \|Q^{\frac 12}e_i\|^2_{W^{\bs,\infty}}<\infty$ for some $\bs\in \mathbb N$.
Suppose that one of the following condition holds, 
\begin{enumerate}[(i)] 
\item when $d=1$, it holds that $\sigma\le 2,$
\item when $d=2$, it holds that $\sigma=1.$
\end{enumerate} 
Furthermore, if $\sigma=\frac 2 d, \lambda=1,$ we in addition assume that 
$\|\varPsi\|< C_{Thr}.$
\end{ap}

With a slight modification, one could follow our approach and investigate the additive noise case with linear damping effect $\alpha\neq 0$ and the multiplicative noise case with the complex-valued Wiener process. Our assumptions $\sigma\in \mathbb N^+$ could be also extended to general positive real number, i.e., $\sigma>0$ if $d=1$ and $\sigma>\frac 12$ if $d=2$, but there still exist some limitations on the upper bound of $\sigma$ since the global $\mathbb H^2$-regularity estimate of SNLSEs is still unclear when $\sigma$ is large.  
In the following, we collect the higher regularity results for 1D SNLSEs and  present some new results for higher dimensional SNLSEs.

\subsection{1D stochastic nonlinear Schr\"odinger equation}

In this part, we show the regularity results of 1D SNLSEs with general polynomial nonlinearities.  This higher regularity result could be obtained by studying the auxiliary functional $V(v)=\|(-\Delta )^{\frac \bs 2}v\|^2-\lambda \<(-\Delta)^{\bs-1} v, |v|^{2\sigma}v\>$ (see, e.g., \cite[Section 2]{CHL16b}), and thus we omitted its proof. Part of them has been reported in \cite{CHL16b}.

\begin{prop}\label{1d-prop}
Let $T>0$, $d=1$, $\bs \in \mathbb N^+$, $\varPsi\in \mathbb H^{\bs}, \sigma \in \mathbb N^+$, $Q^{\frac 12}\in \mathcal L_2^\bs$. 
Suppose that $\sigma\in (0,2),\lambda=1$ or $\sigma\in (0,\infty),\lambda=-1$ if $g(\xi)=1$ and $\alpha=0$, and that $\sigma\in (0,2], \lambda=1$ or  $\sigma\in (0,\infty),\lambda=-1$ if $g(\xi)=\bi \xi, \alpha=\sum\limits_{i} |Q^{\frac 12}e_i|^2$ and $W(\cdot)$ is $L^2(\mathcal O;\mathbb R)$-valued.
Furthermore, if $g(\xi)=\bi \xi$, $\lambda=1, \sigma=2,$ in addition assume that $\|\varPsi\|<C_{Thr}.$
There exists a unique mild solution of \eqref{SNLS} satisfying 
\begin{align*}
\E \Big[ \sup_{s\in [0,T]} \|u(s)\|_{\mathbb H^{\bs}}^{p}\Big]\le C(T,Q,\varPsi,\lambda,\sigma,p).
\end{align*}  
\end{prop}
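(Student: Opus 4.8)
The plan is to control the full $\mathbb H^{\bs}$-norm by applying It\^o's formula to the modified functional $V$ and to proceed by induction on $\bs$, with the energy identity ($\bs=1$) serving as the base case. First I would record the low-order a priori bounds supplied by the cited well-posedness theory: the mass $\|u(t)\|$ is controlled (conserved in the multiplicative case under the stated choice $\alpha=\sum_i|Q^{\frac12}e_i|^2$, and bounded in the additive case), and the energy $H(u(t))$ gives an $\mathbb H^1$-bound. In the defocusing case $\lambda=-1$ the energy is coercive and this is immediate; in the focusing case $\lambda=1$ one invokes the Gagliardo--Nirenberg inequality together with the subcritical restriction $\sigma<2$, respectively the threshold $\|\varPsi\|<C_{Thr}$ when $\sigma=2$, to prevent the potential term from overwhelming the kinetic term, yielding $\E[\sup_{s\le T}\|u(s)\|_{\mathbb H^1}^p]\le C$.

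Next I would apply It\^o's formula to $t\mapsto V(u(t))$ along the solution. The decisive feature is that the leading dispersive contribution cancels: differentiating $\|(-\Delta)^{\bs/2}v\|^2$ and pairing the gradient $2(-\Delta)^{\bs}u$ with $\bi\Delta u$ yields, up to a real constant, $Re\,\bi\int_{\mathcal O}\overline{(-\Delta)^{\bs}u}\,\Delta u\,dx$, which vanishes since $\int_{\mathcal O}\overline{(-\Delta)^{\bs}u}\,\Delta u\,dx=-\|(-\Delta)^{\frac{\bs+1}{2}}u\|^2$ is real. The functional $V$ is engineered so that the top-order nonlinear interaction cancels as well: the contribution of $f(u)=\bi\lambda|u|^{2\sigma}u$ against $2(-\Delta)^{\bs}u$ is matched by that of $\bi\Delta u$ against the derivative of the correction term $-\lambda\langle(-\Delta)^{\bs-1}v,|v|^{2\sigma}v\rangle$, after integration by parts. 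Here the hypothesis $\sigma\in\mathbb N^+$ is convenient, since $|v|^{2\sigma}v$ is then a genuine polynomial in $v,\bar v$, so that the Leibniz expansion of $(-\Delta)^{\bs-1}(|u|^{2\sigma}u)$ contains only finitely many terms whose highest-derivative factor is of order exactly $\bs$ and is paired off by the cancellation, leaving a remainder in which each factor carries at most $\bs-1$ derivatives.

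What remains after these cancellations is a drift of the form $\frac{d}{dt}V(u)=R(u)$, where $R(u)$ is an integral of products of derivatives of $u$ of order at most $\bs-1$, together with the It\^o correction from the noise and, in the multiplicative case, the damping term $-\frac12\alpha u$, which combine so as to respect the $L^2$-structure. The plan is to bound $|R(u)|$ by $C(1+V(u))$ using the inductive $\mathbb H^{\bs-1}$-estimate, the $\mathbb H^1$-bound above, and the one-dimensional embedding $H^1\hookrightarrow L^\infty$ to absorb the lowest-order polynomial factors. For the stochastic integral I would raise to the $p$-th power, take the supremum in time, and apply the Burkholder--Davis--Gundy inequality, estimating the resulting quadratic variation again by a time integral of $1+V(u)$ (using $Q^{\frac12}\in\mathcal L_2^{\bs}$). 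A Gronwall argument then yields $\E[\sup_{s\le T}(1+V(u(s)))^{\frac p2}]\le C$, and finally I would verify that $V(v)$ is comparable to $\|v\|_{\mathbb H^{\bs}}^2$ modulo lower-order norms already under control, which converts the bound on $V$ into the claimed estimate on $\E[\sup_{s\le T}\|u(s)\|_{\mathbb H^{\bs}}^p]$.

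The main obstacle will be the top-order cancellation in the second step: one must check that the correction term in $V$ is chosen so that \emph{all} order-$\bs$ contributions of the nonlinearity cancel, and that the leftover $R(u)$ is genuinely of lower order and polynomially bounded by $V$; this is precisely where the integrality of $\sigma$, the dimension $d=1$, and the subcriticality/threshold conditions enter. The coercivity needed to pass from $V$ back to the $\mathbb H^{\bs}$-norm in the focusing case is the secondary delicate point, again handled by Gagliardo--Nirenberg under the stated restrictions on $\sigma$ and $\varPsi$.
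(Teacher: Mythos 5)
Your proposal follows essentially the same route the paper indicates: the paper omits the proof precisely because it reduces to studying the auxiliary functional $V(v)=\|(-\Delta)^{\frac \bs 2}v\|^2-\lambda \<(-\Delta)^{\bs-1}v,|v|^{2\sigma}v\>$ from \cite[Section 2]{CHL16b}, which is exactly the modified energy, top-order cancellation, induction on $\bs$, Burkholder--Gronwall, and Gagliardo--Nirenberg coercivity argument you describe. Your outline is correct and consistent with the intended proof.
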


\iffalse
In particular, when $g(\xi)=\bi \xi$, $\bs \ge 1$,  there exists $\alpha_1(T,Q,\|\varPsi\|,\lambda,\sigma)$ such that
\begin{align*}
\|u(t)\|=\|\varPsi\|, \; \E \Big[ \sup_{t\in [0,T]}  \exp(e^{-\alpha_1 t}H(u(t)))\Big] & \le C(T,Q,\varPsi,\lambda,\sigma).
\end{align*}
\fi

Similar to the deterministic case \cite{MR2002047}, it is not hard to check that to obtain $\mathbb H^2$-regularity, one needs to take $\sigma\ge \frac 12, \sigma\in \mathbb R^+$. 
Proposition \ref{1d-prop} implies the following useful tail estimates of the mild solution under different norms. 

\begin{cor}\label{1d-cor}
Under the condition of Proposition \ref{1d-prop}, it holds that for a large $R>0,$
\begin{align}\label{exp-l2}
\mathbb P\big(\sup_{s\in [0,T]} \|u(s)\| \ge R\big)
&\le 
C(T,Q,\varPsi,\lambda,\sigma)\exp(-\eta R^2)
\end{align}
with some $\eta=\eta(T,Q,\varPsi,\lambda,\sigma)>0$, 
and 
\begin{align}\label{poly-hs}
\mathbb P\big(\sup_{s\in [0,T]} \|u(s)\|_{\mathbb H^{\bs}} \ge R_1\big)
&\le 
C(T,Q,\varPsi,\lambda,\sigma,p_1)R_1^{-p_1}, \; \forall \; p_1\in \mathbb N^+.
\end{align}
Furthermore, if $g(\xi)=\bi \xi$, then for large $R_2,R_3>0,$
there exist $\eta_1=\eta_1(T,Q,\varPsi,\lambda,\sigma)$, $\eta_2=\eta_2(T,Q,\varPsi,\lambda,\sigma)>0$ such that 
\begin{align}\label{exp-h1}
\mathbb P\big(\sup_{s\in [0,T]}  \|u(s)\|_{\mathbb H^{1}}\ge R_2 \big)
&\le C(T,Q,\varPsi,\lambda,\sigma)\exp(-\eta_1 R_2^2),
\end{align}
and that 
\begin{align}\label{exp-lin}
\mathbb P\big(\sup_{s\in [0,T]}  \|u(s)\|_{L^{\infty}} \ge R_3 \big)
&\le C(T,Q,\varPsi,\lambda,\sigma)\exp(-\eta_2 R_3^4).
\end{align}
\end{cor}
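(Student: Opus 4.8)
The plan is to organize the four bounds by the type of tail they express. The polynomial estimate \eqref{poly-hs} is the softest and follows directly from the moment bound in Proposition \ref{1d-prop} via Markov's inequality: for any $p_1\in\mathbb N^+$,
\begin{align*}
\mathbb P\big(\sup_{s\in[0,T]}\|u(s)\|_{\mathbb H^{\bs}}\ge R_1\big)\le R_1^{-p_1}\,\mathbb E\Big[\sup_{s\in[0,T]}\|u(s)\|_{\mathbb H^{\bs}}^{p_1}\Big]\le C(T,Q,\varPsi,\lambda,\sigma,p_1)\,R_1^{-p_1}.
\end{align*}
By contrast, the three exponential-type bounds \eqref{exp-l2}, \eqref{exp-h1} and \eqref{exp-lin} cannot be extracted from Proposition \ref{1d-prop} alone, since the dependence of its constants on $p$ is not tracked; each of them requires a genuine exponential-integrability estimate, which I would prove separately.

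For \eqref{exp-l2} I would split according to the noise. In the multiplicative case $g(\xi)=\bi\xi$ the mass is conserved, $\|u(t)\|=\|\varPsi\|$ almost surely, so the event is empty once $R>\|\varPsi\|$ and the bound holds trivially. In the additive case $g(\xi)=1$, $\alpha=0$, the Schr\"odinger drift is conservative, $\<u,\bi\Delta u\>=0$ and $\<u,f(u)\>=0$, and It\^o's formula yields the closed mass identity
\begin{align*}
\|u(t)\|^2=\|\varPsi\|^2+\tr(Q)\,t+N(t),\qquad N(t):=2\int_0^t\<u(s),dW(s)\>,
\end{align*}
whose martingale part satisfies $d\<N\>_t\le 4\,\tr(Q)\,\|u(t)\|^2\,dt$. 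I would then combine an exponential-supermartingale (Novikov-type) argument with a Doob or Burkholder--Davis--Gundy maximal inequality to obtain $\mathbb E[\exp(\theta\sup_{s\in[0,T]}\|u(s)\|^2)]<\infty$ for $\theta>0$ small enough, and Markov's inequality applied to $\exp(\theta\|u\|^2)$ produces the Gaussian tail \eqref{exp-l2}.

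For \eqref{exp-h1} and \eqref{exp-lin}, which are stated only for $g(\xi)=\bi\xi$, I would combine mass conservation with the exponential integrability of the energy $H(u)=\tfrac12\|\nabla u\|^2+\tfrac1{2\sigma+2}\|u\|_{L^{2\sigma+2}}^{2\sigma+2}$ available from the analysis underlying Proposition \ref{1d-prop} (see \cite{CHL16b,CH17}), namely $\mathbb E[\sup_{s\in[0,T]}\exp(e^{-\alpha_1 s}H(u(s)))]\le C$ for some $\alpha_1>0$. Since $H(u)\ge\tfrac12\|\nabla u\|^2$ and $\|u\|^2=\|\varPsi\|^2$, this bounds $\mathbb E[\exp(\theta\sup_{s}\|u(s)\|_{\mathbb H^1}^2)]$ for small $\theta$, and Markov's inequality gives \eqref{exp-h1}. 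For \eqref{exp-lin} I would insert the one-dimensional Agmon--Gagliardo--Nirenberg inequality $\|u\|_{L^\infty}^2\le C\|u\|\,\|u\|_{\mathbb H^1}$; using $\|u\|=\|\varPsi\|$ this becomes $\|u\|_{L^\infty}^4\le C\|\varPsi\|^2\|u\|_{\mathbb H^1}^2$, so that
\begin{align*}
\mathbb P\big(\sup_{s\in[0,T]}\|u(s)\|_{L^\infty}\ge R_3\big)\le\mathbb P\Big(\sup_{s\in[0,T]}\|u(s)\|_{\mathbb H^1}^2\ge\frac{R_3^4}{C\|\varPsi\|^2}\Big),
\end{align*}
and \eqref{exp-h1} turns the right-hand side into $C\exp(-\eta_2 R_3^4)$, which accounts for the fourth power in the exponent.

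I expect the crux to be the additive-noise exponential moment behind \eqref{exp-l2}: because $\<N\>$ is controlled only in terms of $\|u\|^2$ itself, the direct It\^o estimate for $\mathbb E[\exp(\theta\|u\|^2)]$ couples to $\mathbb E[\exp(2\theta\|u\|^2)]$ and does not close on its own, so choosing $\theta$ small and passing from the pointwise-in-time bound to the bound on the supremum over $[0,T]$ through the maximal inequality is the delicate point. Once this exponential integrability and the accompanying energy estimate are secured, the remaining ingredients---mass conservation, coercivity of $H$, and the Gagliardo--Nirenberg interpolation---are routine.
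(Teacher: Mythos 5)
Your proposal is correct and follows essentially the same route as the paper: Chebyshev/Markov for the polynomial tail, mass conservation in the multiplicative case, exponential integrability of the mass functional (which the paper obtains by citing Lemma 3.1 of \cite{CHL16b} rather than re-deriving the supermartingale argument) for \eqref{exp-l2}, exponential integrability of the energy plus mass conservation for \eqref{exp-h1}, and the Gagliardo--Nirenberg inequality $\|u\|_{L^{\infty}}\le C\|\nabla u\|^{1/2}\|u\|^{1/2}$ to reduce \eqref{exp-lin} to \eqref{exp-h1}. The delicate point you flag---closing the exponential moment of the mass in the additive case---is precisely what the cited lemma handles, so your sketch matches the paper's proof in substance.
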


\begin{proof}
The tail estimate  \eqref{exp-l2} in the additive noise case could be obtain by applying \cite[Lemma 3.1]{CHL16b} to the mass functional $\|u(t)\|^2.$ In the multiplicative noise case, the mass conservation law immediately implies \eqref{exp-l2}. The Chebyshev inequality, together with Proposition \ref{1d-prop}, leads to \eqref{poly-hs}.
Applying \cite[Lemma 3.1]{CHL16b} to the energy functional $H(u(t))$ and using the the mass conservation law, one can obtain the exponential integrability $\E\Big[\sup\limits_{t\in [0,T]}\exp(e^{-\alpha_1 t} \|u(t)\|^2)\Big] <\infty$ for some $\alpha_1=\alpha_1(\Psi,Q,\lambda,\sigma,T).$   
Using the arguments in the proof of \cite[Corollary 4.1]{CHLZ19},  we obtain \eqref{exp-h1} where  $\eta_1=e^{-\alpha_1 T}$.
By using the Gagliardo--Nirenberg interpolation inequality,  there exists $C'>0$ such that $\|u\|_{L^{\infty}}\le C' \|\nabla u\|^{\frac 12} \|u\|^{\frac 12}.$ It follows that 
\begin{align*}
\mathbb P\Big(\sup_{s\in [0,T]}  \|u(s)\|_{L^{\infty}} \ge R_3 \Big)
&\le \mathbb P\Big(\sup_{s\in [0,T]} \|\nabla u(s)\| \ge \frac {R_3^2}{(C')^2\|u(0)\|} \Big)\\
&\le C(T,Q,u_0,\lambda) \exp\Big(-\frac {\eta_1}{(C')^4\|u(0)\|^2} R_3^{4}\Big),
\end{align*}
which completes the proof of \eqref{exp-lin}.
\end{proof}

\subsection{2D stochastic cubic Schr\"odinger equation}
In this part, we present the higher regularity estimate and tail estimate for 2D stochastic cubic Schr\"odinger equation, which has not been reported in the literature. 
To study the higher regularity,
our key tool is the following critical Sobolev interpolation inequality whose proof is in the appendix.
\begin{lm}\label{cri-sob}
Let $v\in \mathbb H^2$. It holds that for some $C_0>0,$
\begin{align*}
\|v\|_{L^{\infty}}&\le C_0\|v\|_{\mathbb H^1}\big(1+\sqrt{\log(1+\|v\|_{\mathbb H^2}^2)}\big).
\end{align*}
\end{lm}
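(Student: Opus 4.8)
The plan is to establish the logarithmic Sobolev inequality
\begin{align*}
\|v\|_{L^{\infty}}\le C_0\|v\|_{\mathbb H^1}\big(1+\sqrt{\log(1+\|v\|_{\mathbb H^2}^2)}\big)
\end{align*}
by exploiting the critical embedding $\mathbb H^1=H^1(\mathcal O)\hookrightarrow L^p$ for all finite $p$ in dimension $d=2$, together with a dyadic (Littlewood--Paley) decomposition that quantifies the logarithmic failure of the endpoint embedding $H^1\not\hookrightarrow L^{\infty}$. First I would reduce matters to a frequency decomposition $v=\sum_{j\ge 0}v_j$, where $v_j=\Delta_j v$ is the piece of $v$ with frequencies of size $\sim 2^j$ (defined spectrally through the Laplacian on $\mathcal O$, which is legitimate since we work with the interpolation spaces $\mathbb H^{\gamma}$ and the assumption $\sup_i\|e_i\|_{L^{\infty}}<\infty$ controls the eigenfunctions). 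The key pointwise bound is Bernstein's inequality $\|v_j\|_{L^{\infty}}\lesssim 2^{j}\|v_j\|_{L^2}$ in two dimensions, so that
\begin{align*}
\|v\|_{L^{\infty}}\le \sum_{j\ge 0}\|v_j\|_{L^{\infty}}\lesssim \sum_{j\ge 0}2^{j}\|v_j\|_{L^2}.
\end{align*}

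The heart of the argument is to split this sum at a threshold $J$ to be optimized. For the low-frequency part $j\le J$, I would use Cauchy--Schwarz together with the $\mathbb H^1$ control: writing $2^{j}\|v_j\|_{L^2}=2^{-\epsilon j}\cdot 2^{(1+\epsilon)j}\|v_j\|_{L^2}$ is wasteful at the endpoint, so instead I pair $\|v_j\|_{\mathbb H^1}\sim 2^{j}\|v_j\|_{L^2}$ directly and get $\sum_{j\le J}2^{j}\|v_j\|_{L^2}\lesssim \sqrt{J}\,\|v\|_{\mathbb H^1}$ via Cauchy--Schwarz in $j$ over the $J$ indices, using $\sum_j\|v_j\|_{\mathbb H^1}^2\lesssim\|v\|_{\mathbb H^1}^2$. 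For the high-frequency tail $j>J$, I would borrow one extra derivative from the $\mathbb H^2$-norm: since $2^{j}\|v_j\|_{L^2}=2^{-j}\cdot 2^{2j}\|v_j\|_{L^2}\lesssim 2^{-j}\|v_j\|_{\mathbb H^2}$, summing the geometric factor gives $\sum_{j>J}2^{j}\|v_j\|_{L^2}\lesssim 2^{-J}\|v\|_{\mathbb H^2}$. Combining,
\begin{align*}
\|v\|_{L^{\infty}}\lesssim \sqrt{J}\,\|v\|_{\mathbb H^1}+2^{-J}\|v\|_{\mathbb H^2}.
\end{align*}

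To finish, I would choose $J\sim \log_2\big(1+\|v\|_{\mathbb H^2}/\|v\|_{\mathbb H^1}\big)$ so that the high-frequency tail is absorbed into a multiple of $\|v\|_{\mathbb H^1}$ and the low-frequency contribution becomes $\|v\|_{\mathbb H^1}\sqrt{\log(1+\|v\|_{\mathbb H^2}/\|v\|_{\mathbb H^1})}$; elementary manipulation of the logarithm (replacing the ratio by $1+\|v\|_{\mathbb H^2}^2$ at the cost of the additive $1$ and an adjustment of $C_0$) then yields the stated inequality. The main obstacle I anticipate is the clean handling of the endpoint and the bookkeeping of constants: one must take care that the Cauchy--Schwarz step over the low frequencies produces exactly the $\sqrt{\log}$ factor rather than a full power of the logarithm, and that the choice of $J$ (which must be a nonnegative integer) degrades gracefully in the regime $\|v\|_{\mathbb H^2}\lesssim\|v\|_{\mathbb H^1}$, where the right-hand side should simply reduce to $C_0\|v\|_{\mathbb H^1}$. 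An alternative and perhaps cleaner route, which I would keep in reserve in case the boundary geometry of the Lipschitz domain $\mathcal O$ complicates the Littlewood--Paley calculus, is to invoke the Brezis--Gallouet type inequality directly (the two-dimensional version of the classical $H^2$ logarithmic estimate), whose proof is precisely the dyadic argument above and which is standard for the Laplacian on bounded domains and compact manifolds; I would then only need to verify that the interpolation spaces $\mathbb H^{\gamma}$ defined via the Laplacian coincide with the usual Sobolev scale up to equivalent norms, which follows from the spectral setup introduced in the preliminaries.
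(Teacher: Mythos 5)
Your argument is, at its core, the same one the paper uses: the paper's proof is exactly a high/low frequency splitting of the eigenfunction expansion, bounding $\|v\|_{L^{\infty}}\le\sum_i|\langle v,e_i\rangle|$ via the standing assumption $\sup_i\|e_i\|_{L^{\infty}}<\infty$, applying Cauchy--Schwarz with the weight $1+\sqrt{|\lambda_i|}$ on the modes with $\sqrt{|\lambda_i|}<\kappa$ (Weyl's law $\lambda_m\sim m$ in $d=2$ then produces $\big(\sum_{\lambda_i<\kappa^2}\tfrac1{1+\lambda_i}\big)^{1/2}\sim\sqrt{\log(1+\kappa^2)}$) and with the weight $1+|\lambda_i|$ on the tail (giving $C\|v\|_{\mathbb H^2}(1+\kappa)^{-1}$). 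Your dyadic repackaging with Bernstein's inequality is the same computation block by block, so there is no genuinely different route here.

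There is, however, one step in your write-up that would fail as described: the final "elementary manipulation of the logarithm." Your choice $2^{J}\sim 1+\|v\|_{\mathbb H^2}/\|v\|_{\mathbb H^1}$ proves the scale-invariant Brezis--Gallou\"et bound $\|v\|_{L^{\infty}}\lesssim\|v\|_{\mathbb H^1}\big(1+\sqrt{\log(1+\|v\|_{\mathbb H^2}/\|v\|_{\mathbb H^1})}\big)$, but one cannot in general replace $\log(1+\|v\|_{\mathbb H^2}/\|v\|_{\mathbb H^1})$ by $C\big(1+\log(1+\|v\|_{\mathbb H^2}^2)\big)$: rescaling a fixed profile $v\mapsto\epsilon v$ leaves the ratio unchanged while driving $\log(1+\epsilon^2\|v\|_{\mathbb H^2}^2)$ to zero, so for $\|v\|_{\mathbb H^1}$ small the ratio-logarithm can exceed any multiple of the stated one. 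To arrive at the form of the lemma you must instead cut at $\kappa=\|v\|_{\mathbb H^2}$ (equivalently $2^{J}\sim 1+\|v\|_{\mathbb H^2}$), as the paper does; the low-frequency piece then directly carries $\sqrt{\log(1+\|v\|_{\mathbb H^2}^2)}$, and the high-frequency tail is $\|v\|_{\mathbb H^2}/(1+\|v\|_{\mathbb H^2})\le 1$, an additive $O(1)$ rather than a multiple of $\|v\|_{\mathbb H^1}$. This $O(1)$ remainder is present (and silently absorbed) in the paper's own proof as well, and is harmless in the only place the lemma is used (the estimate of $\widetilde U$ in Proposition \ref{d=2-h2}), but it means the clean multiplicative form should really be read with an extra additive constant; your proposal should adopt the $\kappa=\|v\|_{\mathbb H^2}$ cut-off and record that constant rather than trying to convert the ratio-logarithm.
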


For simplicity, let us assume that $\mathcal O$ is  a bounded domain equipped with homogeneous Dirichlet boundary condition. We introduce a new auxiliary functional $\widetilde U$ defined by 
$$\widetilde U(w)=\log(1+\log(1+\|\Delta w\|^2)).$$ 
We would like to remark when considering the Cauchy problem posed on a bounded domain $\mathcal O$ equipped with  homogeneous Neumann boundary condition or periodic boundary condition, one need to consider $\widetilde U(w)=\log(1+\log(1+\|w\|_{\mathbb H^{2}}^2))$ for studying the higher regularity. 
%When $\frac 12\le \sigma<1,$ or $\|u_0\|$  or $\sum_{i}\|\nabla Q^{\frac 12}e_i\|_{L^{\infty}}^2$ is sufficient small, we could consider the functional  $\widetilde U(u)=\log(1+\|\Delta u\|^2)$.

\begin{prop}\label{d=2-h2}
Let $d=2$ and $T>0$. Suppose that Assumption \ref{add} or \ref{mul} holds with $\bs=2$.
The mild solution $u$ satisfies that for $p\in \mathbb N^+,$
\begin{align*}
\E \Big[\sup_{t\in[0,T]}\widetilde U^p(u(t))\Big]\le C(T,Q,\varPsi,\lambda,p).
\end{align*}
\end{prop}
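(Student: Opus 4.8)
The plan is to apply It\^o's formula to the double-logarithmic functional $\widetilde U(u)=F(X)$ with $X:=\|\Delta u\|^2$ and $F(X)=\log(1+\log(1+X))$, and to use that the derivatives of $F$ are precisely tuned to absorb the (at most) double-exponential growth of $X$. Because the mild solution is a priori only in the energy space, I would run the whole computation on the spectral Galerkin projection $P^M u$, where the finite-dimensional It\^o formula is rigorous, derive a bound on $\E[\sup_{t}\widetilde U^{p}(P^M u)]$ that is uniform in $M$, and then pass to the limit; the finiteness of $\sup_t\widetilde U$ then encodes the almost sure $\mathbb H^2$-regularity without demanding any polynomial moment of $\|\Delta u\|$ itself. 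Throughout I use the $\mathbb H^1$-moment bounds $\E[\sup_{s\le T}\|u(s)\|_{\mathbb H^1}^{q}]<\infty$ for every $q$, which hold in the $2$D defocusing (and sub-threshold focusing, $\|\varPsi\|<C_{Thr}$) cubic setting by the mass and energy estimates via the exponential-integrability arguments behind Corollary \ref{1d-cor}.

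First I would compute $dX=A\,dt+dM$ and exploit two cancellations built into the Hamiltonian structure of \eqref{SNLS}. The linear term $2\<\Delta u,\bi\Delta^2 u\>$ vanishes since $\int\overline{\Delta u}\,\Delta^2 u\,dx=-\|\nabla\Delta u\|^2$ is real. Expanding $\Delta(|u|^2u)$ by the Leibniz rule, the top-order piece $2|u|^2\Delta u$ contributes $2\lambda\,\mathrm{Re}\big(\bi\int|u|^2|\Delta u|^2\big)=0$, so the surviving nonlinear drift obeys
\begin{align*}
|A_{\mathrm{nl}}|\le C\|u\|_{L^\infty}^2\|\Delta u\|^2+C\|u\|_{L^\infty}\|u\|_{\mathbb H^1}\|\Delta u\|^2,
\end{align*}
where I use the $2$D interpolation bound $\|\nabla u\|_{L^4}^2\le C\|u\|_{\mathbb H^1}(\|\Delta u\|+\|u\|_{\mathbb H^1})$. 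In the multiplicative case the same mechanism acts on the noise: the leading term $\<\Delta u,\bi\Delta u\,Q^{\frac12}e_j\>=\mathrm{Re}\big(\bi\int|\Delta u|^2 Q^{\frac12}e_j\big)$ vanishes because $Q^{\frac12}e_j$ is real-valued, so after summing over $j$ with $\sum_j\|Q^{\frac12}e_j\|_{W^{2,\infty}}^2<\infty$ the bracket density satisfies $d\<X\>\le C\|u\|_{\mathbb H^1}^2\|\Delta u\|^2\,dt$; moreover the top-order part of the It\^o correction $\sum_j\|\Delta(uQ^{\frac12}e_j)\|^2$ equals $\int\alpha|\Delta u|^2$, which is exactly cancelled by the top-order part of the damping $-\tfrac12\<\Delta u,\Delta(\alpha u)\>$ thanks to the choice $\alpha=\sum_j|Q^{\frac12}e_j|^2$. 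The additive case ($\alpha=0$, $g\equiv1$) is easier: the correction is the constant $\|Q^{\frac12}\|_{\mathcal L_2^2}^2$ and $d\<X\>\le CX\,dt$. In all cases the linear, damping and correction contributions to $A$ are $\le C(1+\|u\|_{\mathbb H^1}^2)(1+X)$.

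Next I would close the estimate with the critical Sobolev inequality of Lemma \ref{cri-sob}, in the form $\|u\|_{L^\infty}^2\le C\|u\|_{\mathbb H^1}^2\big(1+\log(1+\|u\|_{\mathbb H^2}^2)\big)$ together with $\|u\|_{\mathbb H^2}^2\le C(\|\Delta u\|^2+\|u\|_{\mathbb H^1}^2)$, which turns the nonlinear bound into $|A_{\mathrm{nl}}|\le C(\|u\|_{\mathbb H^1})\,(1+\log(1+X))\,X$. The decisive feature of the double logarithm is that
\begin{align*}
F'(X)=\frac{1}{(1+X)(1+\log(1+X))},\qquad |F''(X)|\le\frac{2}{(1+X)^2(1+\log(1+X))},
\end{align*}
so that $F'(X)A$ and $\tfrac12|F''(X)|\,d\<X\>$ are both dominated by a random constant $C(\|u\|_{\mathbb H^1})$ with finite moments of every order, while $F'(X)^2 d\<X\>\le C(\|u\|_{\mathbb H^1})\,dt$. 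Integrating the It\^o formula for $\widetilde U(u)$, taking the supremum over $t\in[0,T]$, raising to the $p$-th power and taking expectations, the drift and the correction are bounded by $C_p T^{p}\,\E[(1+\sup_{s\le T}\|u(s)\|_{\mathbb H^1}^2)^{p}]$, and the martingale $\int_0^t F'(X)\,dM$ is handled by the Burkholder--Davis--Gundy inequality with bracket $\int_0^T F'(X)^2\,d\<X\>\le C\,T\sup_{s\le T}\|u(s)\|_{\mathbb H^1}^2$. The $\mathbb H^1$-moment bounds then deliver $\E[\sup_{t\in[0,T]}\widetilde U^{p}(u(t))]<\infty$.

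The main obstacle is the critical nature of the $2$D cubic equation at the $\mathbb H^2$ level: in sharp contrast to Proposition \ref{1d-prop}, no polynomial moment of $\|\Delta u\|$ is available, since even deterministically $\|u(t)\|_{\mathbb H^2}$ can grow double-exponentially in $t$. The two genuinely delicate points are therefore (i) securing the cancellations of the top-order linear, nonlinear and noise terms, which hinge on the imaginary factor $\bi$ and would fail outright for a parabolic nonlinearity, and (ii) matching the logarithmic growth of the drift exactly against the decay of $F'$, which dictates the double-logarithmic form of $\widetilde U$ and forces the use of the borderline inequality in Lemma \ref{cri-sob} rather than a plain Sobolev embedding. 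Once these are in place, the Galerkin justification of It\^o's formula and the verification that the additive, multiplicative and damping terms all fit the envelope $A\le C(\|u\|_{\mathbb H^1})(1+\log(1+X))X+C(1+\|u\|_{\mathbb H^1}^2)(1+X)$ are routine.
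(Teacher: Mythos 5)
Your proposal is correct and follows essentially the same route as the paper's proof: It\^o's formula applied to the double-logarithmic functional $\widetilde U$, the cancellation of the top-order linear, nonlinear and noise contributions coming from the factor $\bi$ and the real-valuedness of $Q^{\frac 12}e_j$, the logarithmic Sobolev inequality of Lemma \ref{cri-sob} to absorb $\|u\|_{L^\infty}^2$ into $\|u\|_{\mathbb H^1}^2(1+\log(1+\|\Delta u\|^2))$, and Gronwall combined with the a priori $\mathbb H^1$ moment bound \eqref{pri-ene}. The only cosmetic differences are that the paper applies It\^o directly to $\widetilde U^p$ rather than to $\widetilde U$ followed by a $p$-th power, and that your explicit Galerkin regularization step is left implicit in the paper.
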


\begin{proof}
We only present the proof of the multiplicative noise case since the proof of the additive noise case is similar and simpler.
According to our assumptions, by using the energy evolution (see, e.g., \cite{BD03,CHL16b}), one can obtain the following boundedness of any finite $p$-th moment, 
\begin{align}\label{pri-ene}
\E \Big[\sup_{t\in [0,T]}\|u(t)\|^{2p}_{\mathbb H^1}\Big]\le C(T,Q,\varPsi,\lambda,p).
\end{align} 
Applying It\^o's formula to $\widetilde U^p(u(t)),$ we obtain that 
 \begin{align*}
 &\widetilde U^p(u(t))\\
 &=\widetilde U^{p}(u(0))
+ \int_0^t p 2\widetilde U^{p-1}(u)  \frac 1{1+\log(1+\|\Delta u\|^2)}\frac 1{1+\|\Delta u\|^2}\Big(\<\Delta u, \bi \lambda 2Re(\bar u \Delta u)u \\
&\quad+\bi \lambda 4Re(\bar u \nabla u)\nabla u+\bi \lambda 2 |\nabla u|^2 u\>\Big)ds\\
&-\int_0^t p \widetilde U^{p-1}(u) \frac 1{1+\log(1+\|\Delta u\|^2)} \frac 1{1+\|\Delta u\|^2}\sum_{i\in\mathbb N^+}\Big( \<\Delta u,2 |\nabla  Q^{\frac 12}e_i|^2  u\\
&\quad +2\nabla u \nabla Q^{\frac 12} e_i e_i +\Delta u |Q^{\frac 12}e_i|^2+2u\Delta Q^{\frac 12}e_iQ^{\frac 12}e_i\> \Big)ds\\
&+\int_0^t 2p\widetilde  U^{p-1}(u) \frac 1{1+\log(1+\|\Delta u\|^2)}  \frac 1{1+\|\Delta u\|^2}\< \Delta u,\bi \Delta (udW(s))\>\\
&+\int_0^t p \widetilde U^{p-1}(u) \frac 1{1+\log(1+\|\Delta u\|^2)} \frac 1{1+\|\Delta u\|^2} \sum_{i\in \mathbb N^+}\Big( \|\nabla u \nabla Q^{\frac 12}e_i\|^2+\|\Delta u  Q^{\frac 12}e_i\|^2\\
&\quad +\|u \Delta Q^{\frac 12}e_i\|^2+
2\< u \Delta Q^{\frac 12}e_i, \nabla u \nabla Q^{\frac 12}e_i\>
+2\<\nabla u\nabla Q^{\frac 12}e_i,\Delta u Q^{\frac 12}e_i\>\\
&\quad+2\<u \Delta Q^{\frac 12}e_i,\Delta u Q^{\frac 12}e_i\>\Big)ds\\
&+\int_0^t -2p \widetilde U^{p-1}(u)\frac 1{1+\log(1+\|\Delta u\|^2)} \frac 1{(1+\|\Delta u\|^2)^2} \Big(1+\frac 1{1+\log(1+\|\Delta u\|^2)}\\
&\quad -(p-1)\Big) 
\sum_{i\in \mathbb N^+}\Big(\<\Delta u, \bi \nabla u\nabla Q^{\frac 12}e_i\>+\<\Delta u,\bi u \Delta Q^{\frac 12}e_i\>\Big)^2ds.
 \end{align*}
Taking supreme over $t\in[0,t_1]$ and taking expectation, applying H\"older's and Young's inequalities, using the Gagliardo--Nirenberg interpolation inequality  and  Lemma \ref{cri-sob}, as well as Burkerholder's inequality, we obtain that for a small $\epsilon\in (0,1),$ 
 \begin{align*}
 &\E \Big[\sup_{t\in [0,t_1]}\widetilde U^p(u(t)\Big]\\
 &\le \E [ \widetilde U^{p}(u(0))]+C\int_0^{t_1} \E \Big[\widetilde U^{p-1}(u)\frac 1{1+\log(1+\|\Delta u\|^2)} \Big(1+\|u\|_{L^{\infty}}^2 \\
 &\quad +\|u\|_{L^{\infty}}\|\nabla u\|+\|u\|^2 +\|\nabla u\|^2\Big)\Big]ds\\
 & +
\E \Big[\sup_{t\in [0,t_1]}\Big|\int_0^t 2p \widetilde U^{p-1}(u) \frac 1{1+\log(1+\|\Delta u\|^2)}  \frac 1{1+\|\Delta u\|^2}\< \Delta u,\bi \Delta (udW(s))\>\Big|\Big]\\
 &\le  \E [\widetilde U^{p}(u(0))]+C\int_0^{t_1} \E \Big[\widetilde U^{p-1}(u)\Big(1+\|u\|^2 +\|\nabla u\|^2\Big)\Big]ds\\
 &+C\E \Big[\Big(\int_0^{t_1} \widetilde U^{2p-2}(u) (1+\|u\|^2+\|\nabla u\|^2) ds\Big)^{\frac 12}\Big]\\
 &\le \E [\widetilde U^{p}(u(0))]+\epsilon \E [\sup_{t\in [0,t_1]}\widetilde U^p(u(t)]+C(\epsilon)\int_0^{t_1} \E [\widetilde U^{p}(u)]ds\\
 &+C(\epsilon) \int_0^t \E \Big[ 1+\|u\|^{2p} +\|\nabla u\|^{2p} \Big]ds.
 \end{align*}
The Gronwall's inequality and \eqref{pri-ene} yield that 
 \begin{align*}
 \E \Big[\sup_{t\in [0,T]} \widetilde U^p(u(t)\Big]\le C(T,Q,\varPsi,\lambda,p),
 \end{align*}
 which completes the proof.
\hfill
\end{proof}

Thanks to the above regularity estimate, we are able to present the tail estimate for \eqref{SNLS} with $d=2$.

\begin{cor}\label{2d-cor}
Under the condition of Proposition \ref{d=2-h2}, for large $R,R_1>0,$ \eqref{exp-l2} and \eqref{poly-hs} hold with $\bs=1$. 
Furthermore, for  a large $R_2>0,$ it holds that for any $p_1\ge 1,$
\begin{align}\label{2d-exp-lin}
\mathbb P\big(\sup_{s\in [0,T]}  \|u(s)\|_{L^{\infty}} \ge R_2 \big)
&\le  C(T,Q,\varPsi,\lambda,p_1) [\log (R_2)] ^{-p_1},\\\nonumber
\mathbb P\big(\sup_{s\in [0,T]}  \|u(s)\|_{\mathbb H^2} \ge R_2 \big)
&\le  C(T,Q,\varPsi,\lambda,p_1) [\log(1+\log(1+R_2^2))] ^{-p_1}.
\end{align}
Furthermore, if $g(\xi)=\bi \xi$, then \eqref{exp-h1} holds.
\end{cor}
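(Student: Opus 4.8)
The plan is to read off all four tail estimates from the two moment bounds already in hand — the uniform $\mathbb H^1$-moment estimate \eqref{pri-ene} and the double-logarithmic regularity $\E[\sup_{t\in[0,T]}\widetilde U^p(u(t))]\le C$ of Proposition \ref{d=2-h2} — together with the critical Sobolev inequality of Lemma \ref{cri-sob}. The first two assertions are routine and parallel Corollary \ref{1d-cor}. For \eqref{exp-l2} with $\bs=1$ I would, in the additive case, apply \cite[Lemma 3.1]{CHL16b} to the mass functional $\|u(t)\|^2$, while in the multiplicative case the mass conservation law $\|u(t)\|=\|\varPsi\|$ makes the probability vanish once $R>\|\varPsi\|$. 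For \eqref{poly-hs} with $\bs=1$, Chebyshev's inequality against \eqref{pri-ene} gives $\mathbb P(\sup_s\|u(s)\|_{\mathbb H^1}\ge R_1)\le R_1^{-2q}\,\E[\sup_s\|u(s)\|_{\mathbb H^1}^{2q}]\le C R_1^{-2q}$, and choosing $2q\ge p_1$ yields the claim for every $p_1\in\mathbb N^+$. The exponential $\mathbb H^1$-tail \eqref{exp-h1} in the multiplicative case then follows exactly as in Corollary \ref{1d-cor}: apply \cite[Lemma 3.1]{CHL16b} to the energy $H(u(t))$, combine with mass conservation to get exponential integrability of $\|u(t)\|_{\mathbb H^1}^2$, and conclude via the argument of \cite[Corollary 4.1]{CHLZ19}.

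The $\mathbb H^2$-tail is obtained directly from Proposition \ref{d=2-h2}. Since $\mathcal O$ carries the homogeneous Dirichlet condition, $\|w\|_{\mathbb H^2}$ is comparable to $\|\Delta w\|$, so that $\{\sup_s\|u(s)\|_{\mathbb H^2}\ge R_2\}\subseteq\{\sup_s\widetilde U(u(s))\ge \log(1+\log(1+c^2R_2^2))\}$ for a fixed $c>0$. Chebyshev's inequality against the $p$-th moment of $\sup_s\widetilde U(u(s))$ then produces the bound $C[\log(1+\log(1+c^2R_2^2))]^{-p}$, which for large $R_2$ is dominated by $C[\log(1+\log(1+R_2^2))]^{-p}$; taking $p\ge p_1$ finishes the second line of \eqref{2d-exp-lin}.

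The main obstacle is the $L^\infty$-tail, the first line of \eqref{2d-exp-lin}, where Lemma \ref{cri-sob} and the double-logarithmic structure of $\widetilde U$ must be matched against each other. Applying Lemma \ref{cri-sob} pathwise and taking the supremum over $t\in[0,T]$, I would write $\sup_s\|u(s)\|_{L^\infty}\le C_0 A(1+\sqrt{B})$ with $A:=\sup_s\|u(s)\|_{\mathbb H^1}$ and $B:=\log(1+\sup_s\|u(s)\|_{\mathbb H^2}^2)$. The decisive step is the product splitting
\begin{align*}
\{C_0 A(1+\sqrt B)\ge R_2\}\subseteq \{A\ge c_1 R_2^{1/2}\}\cup\{\sqrt B\ge c_2 R_2^{1/2}\},
\end{align*}
valid for suitable constants $c_1,c_2>0$ depending only on $C_0$. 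On the first event, Chebyshev against \eqref{pri-ene} gives a bound $CR_2^{-q}$ with $q$ arbitrary, which decays faster than any inverse power of $\log R_2$. On the second event, I would rewrite $\{\sqrt B\ge c_2 R_2^{1/2}\}=\{B\ge c_2^2 R_2\}$ and use $\|w\|_{\mathbb H^2}\sim\|\Delta w\|$ to see that $\log(1+\|\Delta u\|^2)=\exp(\widetilde U)-1$ is comparable to $B$, so that $\sup_s\widetilde U(u(s))\ge\log(1+c_2' R_2)$ on this event for some $c_2'>0$; Chebyshev against the $p$-th moment of $\sup_s\widetilde U(u(s))$ then yields $C[\log(1+c_2'R_2)]^{-p}\le C[\log R_2]^{-p}$ for large $R_2$. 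Summing the two contributions, the slowly decaying second term dominates, and since $p$ is arbitrary we obtain $C[\log R_2]^{-p_1}$ for every $p_1\ge 1$. The crux — and the reason the estimate is only $\log$-polynomial rather than exponential — is precisely that the strongest integrable quantity available in $d=2$ is the doubly iterated logarithm $\widetilde U$, so the slow decay of its Chebyshev bound is exactly what survives after passing through the critical interpolation inequality; correctly matching these two scales is the one place where care is genuinely required.
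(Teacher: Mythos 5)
Your proposal is correct and follows essentially the same route as the paper: the first three tails are read off as in Corollary \ref{1d-cor}, and the $L^\infty$- and $\mathbb H^2$-tails come from combining Lemma \ref{cri-sob} with Chebyshev against the moments of $\widetilde U$ from Proposition \ref{d=2-h2}, splitting the event $\{C_0A(1+\sqrt B)\ge R_2\}$ into $\{\|u\|_{\mathbb H^1}\gtrsim R_2^{1/2}\}$ (polynomial decay, negligible) and $\{\log(1+\|\Delta u\|^2)\gtrsim R_2\}$, i.e.\ $\{\widetilde U\gtrsim \log R_2\}$ (the dominant $[\log R_2]^{-p_1}$ term). The only difference is cosmetic: you split the product multiplicatively, while the paper first takes logarithms and splits the resulting sum into two halves — the two decompositions are equivalent.
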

\begin{proof}
The proofs of \eqref{exp-l2}-\eqref{exp-h1} are similar to those in Proposition \ref{1d-prop}. We only prove \eqref{2d-exp-lin}. Applying Lemma \ref{cri-sob}, \eqref{poly-hs}, the Chebyshev inequality and Proposition \ref{d=2-h2}, we obtain that for any $p_1\ge 1,$
\begin{align*}
&\mathbb P\big(\sup_{s\in [0,T]}  \|u(s)\|_{L^{\infty}} \ge R_2 \big)\\
&\le  \mathbb P\Big(\sup_{s\in [0,T]}  \Big[\log (1+\|u(s)\|_{\mathbb H^1}) +\log(1+\log^{\frac 12}(1+\|\Delta u(s)\|^2)) \Big]\ge  \log (R_2)-\log(C) \Big)\\
&\le \mathbb P\Big(\sup_{s\in [0,T]}  \log (1+\|u(s)\|_{\mathbb H^1}) \ge \frac {  \log (R_2)-\log(C)}2\Big) \\
&+ \mathbb P\Big(\sup_{s\in [0,T]} \widetilde U(u(s))  \ge  (\frac {  \log (R_2)-\log(C) }2)^2 \Big)
\\
&\le C(T,Q,\varPsi,\lambda,p_1) [\log (R_2)]^{-p_1}.
\end{align*}
The estimate of $\mathbb P\big(\sup\limits_{s\in [0,T]}  \|u(s)\|_{\mathbb H^2} \ge R_2 \big)$ is similar.
\end{proof}

\iffalse
From the above analysis, it can be seen that the regularity estimate in Proposition \ref{d=2-h2}  and the tail estimate in Corollary \ref{2d-cor} still hold for $\sigma \in (\frac 12,1].$ For general $\sigma>1$, some new Lyapunov functional is needed, and this will be investigate in the future. 
\fi

\subsection{Higher dimensional stochastic cubic Schrodinger equation}

In this part, we suppose that $\mathcal O$ is any compact Riemannian manifold of $d\ge 2$ without boundary. For simplicity, we will only consider the cubic Schrodinger equation with random coefficient, namely,
\begin{align}\label{SCSE}
du=\bi \Delta u dt+f(u)dt+g(u)B(t)dt, \;
u(0)=u_0,
\end{align}
where $\sigma=1$, the random coefficient $B(t,x)=\mathcal V(x)\beta(t), x\in \mathbb R$ with a smooth real-valued potential $\mathcal V$ and a standard Brownian motion $\beta(t).$ Formally speaking, one may think that the driving process is $W(t)=\int_0^t B(r) dr$ in \eqref{SNLS}.
At this moment, our approach can not be used  to deal with \eqref{SNLS} directly since its temporal regularity is not enough to introduce the Bourgain space.  However,  one may follow our approach to show the well-posededness of \eqref{SNLS} driven by $W(t,x)=\beta^H(t)\mathcal V(x),$
 where $\beta^H$ is the centered fractional Brownian motion with the Hurst parameter $H>\frac 12$. 

%$H+b'>1$
 
In order to study the $\mathbb H^2$-regularity, we introduce the following definition $(\mathcal P_{\bs_0})$ which gives a kind of bilinear Strichartz estimate (see e.g., \cite{MR1691575}). 

\begin{df}
Let $\bs_0\in [0,1)$. We say $S(t)=\exp(\bi \Delta t)$ satisfies the property $(\mathcal P_{s_0})$ if for all the dyadic numbers $M,L$, and $v_0,w_0\in \mathbb H$ localized on the dyadic intervals of order $M,L$, i.e.,
\begin{align*}
\mathbb I_{M\le \sqrt{-\Delta}\le 2M}(v_0)=v_0, \; \mathbb I_{L\le \sqrt{-\Delta}\le 2L}(w_0)=w_0,
\end{align*}
it holds that 
\begin{align*}
\|S(t)v_0S(t)w_0\|_{L^2([0,1]\times \mathcal O)}\le C(\min(M,L))^{\bs_0} \|v_0\|\|w_0\|.
\end{align*}
\end{df}

Such definition was used by several authors in the context of the wave equations and the Schr\"odinger equations in deterministic case (see, e.g., \cite{BGT05,MR2428002}). For instance, it has been proved that $(\mathcal P_{\bs_0})$ holds with $\bs_0=(\frac 12)^+$ for $\mathbb S^3$ and $\bs_0=(\frac 34)^+$ for $\mathbb S^2\times \mathbb S^1.$ When $\mathcal O=\mathbb T^2$ and $\mathbb T^3$, $\bs_0=0^+$ and $(\frac 12)^+$ respectively.  For general manifolds with boundary, $\bs_0$ is shown to be $(\frac 34)^+$. 

Beyond the bilinear  Strichartz estimate, we also need to introduce the Bourgain spaces:
\begin{df}  
The Bourgain space $X^{\bs,b}(\mathbb R \times M)$ is the completion of $C_0^{\infty}(\mathbb R; \mathbb H^{\bs})$ under the norm 
\begin{align*}
\|u\|_{X^{s,b}(\mathbb R\times M)}^2
&=\sum_{i} \big\|(\tau+\lambda_k)^{\frac b2} (1+\lambda_k)^{\frac s 2} \widehat{ \<u,e_k\>}(\tau)\big\|^2_{L^2(\mathbb R_{\tau};\mathbb H)}\\
&= \|e^{-\bi t\Delta } u(t,\cdot)\|_{H^{\bs}(\mathbb R_t;\mathbb H^{\bs})}^2
\end{align*}
where $\widehat {(\cdot)}(\tau)$ denotes the Fourier transform with respect to time variable.
For any $T>0$, we denote by $X_{T}^{\bs,b}$ the space of restrictions of the elements of $X^{\bs,b}(\mathbb R \times M)$ endowed with the norm 
\begin{align*}
\|u\|_{X_T^{s,b}}=\inf \{\|\widetilde u\|_{X^{\bs,b}(\mathbb R \times M)}, \widetilde u \; \big|_{[0,T]\times \mathcal O}=u\}.
\end{align*}
\end{df}

The basic properties of the Bourgain space (see e.g. \cite{BGT05}), which will be used in the investigation $\mathbb H^{\bs}$-regularity, is summarized as follows. The Sobolev emebdding theorem holds, i.e., $X_{T}^{\bs,b} \hookrightarrow \mathcal C([0,T];\mathbb H^{\bs})$ and
$X_T^{0,\frac 14}  \hookrightarrow  L^4([0,T]; \mathbb H)$, as well as 
$X^{\bs_2,b_2}(\mathbb R \times M)   \hookrightarrow X^{\bs_1,b_1}_T (\mathbb R \times M) $ for $\bs_1\le \bs_2$ and $b_1\le b_2.$
\iffalse
Second, the condition $(\mathcal P_{\bs_0})$ is equivalent to the following condition (see e.g. \cite[Lemma 2.3]{}):
for any $b>1$ and $v_0,w_0\in X^{s,b}(\mathbb R\times \mathcal O)$ with $$\mathbb I_{M\le \sqrt{-\Delta}\le 2M}(v_0)=v_0, \; \mathbb I_{L\le \sqrt{-\Delta}\le 2L}(w_0)=w_0,$$
it holds that 
\begin{align*}
\|v_0 w_0\|_{L^2(\mathbb R\times \mathcal O)}\le C(\min(M,L))^{s_0} \|v_0\|_{X^{0,b}(\mathbb R\times \mathcal O)} \|w_0\|_{X^{0,b}(\mathbb R\times \mathcal O)}.
\end{align*}
\fi
In the following, we state the nonlinear estimate which is taken from \cite[Proposition 3.3 and Remark 3.4]{MR2428002}.

\begin{lm}\label{lm-non-est}
Under the condition of $(\mathcal P_{\bs_0}),$ let $\bs>\bs_0.$ There exists $(b,b')$ satisfying $0<b'<\frac 12<b, b+b'<1$, and $C>0$ such that for any $v,w,y \in X^{\bs,b}(\mathbb R\times \mathcal O),$
\begin{align*}
\|v \bar w y\|_{X^{\bs,-b'}(\mathbb R\times \mathcal O)}
&\le C\|v\|_{X^{\bs,b}(\mathbb R\times \mathcal O)}\|w\|_{X^{\bs,b}(\mathbb R\times \mathcal O)}\|y\|_{X^{\bs,b}(\mathbb R\times \mathcal O)}.
\end{align*}
In particular, when $v=w=y$, $s\ge 1,$  there exists $(b,b')\in \mathbb R^2$ satisfying $0<b'<\frac 12<b, b+b'<1$ and $C>0$ such that 
\begin{align*}
\||v|^2v\|_{X^{\bs,-b'}(\mathbb R\times \mathcal O)} \le C\|v\|_{X^{\bs,b}(\mathbb R\times \mathcal O)}\|v\|_{X^{1,b}(\mathbb R\times \mathcal O)}^2.
\end{align*}
\end{lm}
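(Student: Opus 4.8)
The plan is to deduce Lemma~\ref{lm-non-est} from the bilinear Strichartz estimate encoded in $(\mathcal P_{\bs_0})$ by a duality-and-dyadic-summation argument, following the scheme of \cite{BGT05,MR2428002}. First I would pass from the purely spatial, frequency-localized bound in $(\mathcal P_{\bs_0})$ to its Bourgain-space counterpart: for $b>\frac12$ and $v_0,w_0$ localized at dyadic frequencies $M,L$ one has $\|v_0 w_0\|_{L^2(\mathbb R\times\mathcal O)}\le C(\min(M,L))^{\bs_0}\|v_0\|_{X^{0,b}(\mathbb R\times\mathcal O)}\|w_0\|_{X^{0,b}(\mathbb R\times\mathcal O)}$. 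This follows by writing each factor as a superposition over the modulation variable $\tau$ of free evolutions $S(t)$ and using $\int\langle\tau\rangle^{-2b}\,d\tau<\infty$, and it supplies the space-time bilinear bound that the $X^{\bs,b}$ framework requires.

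Next, by duality I would write $\|v\bar w y\|_{X^{\bs,-b'}}=\sup\{|\mathrm{Re}\,\langle v\bar w y,\phi\rangle_{L^2(\mathbb R\times\mathcal O)}|:\|\phi\|_{X^{-\bs,b'}}\le1\}$, reducing the trilinear estimate to control of the quadrilinear form $\int v\bar w y\,\bar\phi$. Decomposing each of $v,w,y,\phi$ into Littlewood--Paley pieces $v_N,w_L,y_K,\phi_J$ localized at dyadic spatial frequencies, the task becomes to sum $\sum\int v_N\bar w_L y_K\bar\phi_J$ over all dyadic quadruples. Using that the output frequency of a product is controlled by the sum of the input frequencies, one sees that the two largest among $\{N,L,K,J\}$ must be comparable; I would then pair the four factors two-by-two so that each pair mixes a high and a low frequency, apply Cauchy--Schwarz in $L^2(\mathbb R\times\mathcal O)$, and estimate each $L^2$ product by the Bourgain-space bilinear bound above, gaining a factor $(\min)^{\bs_0}$ from each of the two applications.

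The decisive step is the dyadic summation. The $\langle\nabla\rangle^{\bs}$ weights are placed on the two highest-frequency factors, producing a derivative loss that is a power of the largest frequency, while the two bilinear gains supply compensating powers $(\min)^{\bs_0}$. Because $\bs>\bs_0$ strictly, each ratio (low frequency)$/$(high frequency) enters with a positive power $\bs-\bs_0$, so the residual geometric series over the dyadic spatial frequencies converges and is bounded by $\prod\|\cdot\|_{X^{\bs,b}}$. The temporal weights require care, since the factor carrying $\phi$ has only regularity $b'<\frac12$ in time and the bilinear estimate demands $b>\frac12$; this is handled by the standard case split on which factor realizes the largest modulation, and the admissibility conditions $b>\frac12>b'$ and $b+b'<1$ are precisely what make the resulting $\tau$-integrals converge through a Young-type bound. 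I expect this bookkeeping — realizing a genuine gain $(\text{low}/\text{high})^{\bs-\bs_0}$ in every frequency configuration together with the modulation analysis — to be the main obstacle, and it is exactly where the strict inequality $\bs>\bs_0$ and the constraints on $(b,b')$ are used.

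Finally, for the special case $v=w=y$ with $\bs\ge1$, I would revisit the same sum and assign the full $X^{\bs,b}$ norm only to whichever of the three identical factors realizes the maximal frequency, estimating the other two at regularity $1$ rather than $\bs$. Since $\bs_0<1$, the bilinear gains still dominate the smaller derivative losses carried by those two factors, so the summation closes with the top factor measured in $X^{\bs,b}$ and the remaining two in $X^{1,b}$, yielding $\||v|^2v\|_{X^{\bs,-b'}}\le C\|v\|_{X^{\bs,b}}\|v\|_{X^{1,b}}^2$.
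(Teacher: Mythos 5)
The paper does not actually prove this lemma: it is imported verbatim from \cite[Proposition 3.3 and Remark 3.4]{MR2428002} (building on \cite{BGT05}), so there is no in-paper argument to compare yours against. Your sketch is a faithful reconstruction of the proof in those references --- transfer of $(\mathcal P_{\bs_0})$ to an $X^{0,b}$ bilinear bound, duality, Littlewood--Paley decomposition with high--low pairing, dyadic summation exploiting $\bs>\bs_0$, and, for the cubic case, placing the $X^{\bs,b}$ norm on the highest-frequency factor and $X^{1,b}$ on the other two. The one step you should flesh out is the modulation transfer: the bilinear estimate needs exponent $b>\frac 12$ on both factors while the dual function only carries $b'<\frac 12$, and in the cited proof this is resolved by interpolating the frequency-localized bilinear estimate down to an exponent below $\frac 12$ at the cost of slightly enlarging $\bs_0$, so the surplus $\bs-\bs_0$ is spent partly on that interpolation and partly on the geometric frequency sum; your outline names the issue but does not make this trade-off explicit.
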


Now we are in a position to prove the $\mathbb H^{\bs}$-regularity,  $\bs\ge 2$, in the pathwise sense. 

\begin{prop}\label{3d-prop}
Let  $T>0$,  $\sigma=1$, $d\ge 2$, $\lambda=-1$, the condition $(\mathcal P_{\bs_0})$ hold and $u(0)\in \mathbb H^{\bs}, V\in \mathbb W^{\bs,\infty},  \bs>\bs_0.$
The solution of the Cauchy problem \eqref{SNLS} is locally well-posed in $\mathbb H^{\bs}, a.s.$
Moreover, if $\bs \ge 1,$ then the solution exists globally in $\mathcal C([0,T];\mathbb H^{\bs}), a.s.$ 
\end{prop}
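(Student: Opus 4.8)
The plan is to argue pathwise: for almost every $\omega$ the sample path $\beta(\cdot,\omega)$ lies in $\mathcal C([0,T];\R)$, so the term $g(u)B(t)=\bi\mathcal V\beta(t)u$ becomes a bounded, real, time-dependent linear potential and \eqref{SCSE} reduces to a deterministic defocusing cubic Schr\"odinger equation with potential. Fixing such an $\omega$, I would solve the Duhamel formulation by a contraction mapping in the Bourgain space $X_T^{\bs,b}$ with $(b,b')$ furnished by Lemma \ref{lm-non-est}. The cubic term is estimated by the trilinear bound $\||u|^2u\|_{X^{\bs,-b'}}\le C\|u\|_{X^{\bs,b}}\|u\|_{X^{1,b}}^2$, which crucially costs only one high-regularity factor; the linear potential term $\bi\mathcal V\beta u$ is lower order and is controlled in $X_T^{\bs,-b'}$ by a product estimate using $\mathcal V\in W^{\bs,\infty}$ and $\sup_{[0,T]}|\beta(\cdot,\omega)|<\infty$. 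Closing the fixed point in a ball of $X_\tau^{\bs,b}$ on a short interval of length $\tau$, exploiting the time-localization gain $\tau^{\theta}$ coming from $b+b'<1$, yields local well-posedness in $\mathbb H^{\bs}$, $\bs>\bs_0$, with an existence time depending on $\|u_0\|_{\mathbb H^{\bs}}$, $\|\mathcal V\|_{W^{\bs,\infty}}$ and $\sup_{[0,T]}|\beta|$.

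For the global statement ($\bs\ge1$) I would first record the two conservation-type laws, still pathwise. The real structure of $\bi\mathcal V\beta u$ keeps the mass invariant, $\|u(t)\|=\|u_0\|$, exactly as for the unperturbed equation. For the energy $H(u)=\tfrac12\|\nabla u\|^2+\tfrac14\|u\|_{L^4}^4$, which is the conserved quantity in the defocusing case $\lambda=-1$, a direct computation shows that all the genuinely nonlinear and the purely multiplicative contributions cancel, leaving only $\frac{d}{dt}H(u)=-\beta(t)\,\mathrm{Im}\int_{\mathcal O}u\,\nabla\bar u\cdot\nabla\mathcal V\,dx$, which is bounded by $C|\beta(t)|\,\|u\|\,\|\nabla u\|$. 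Combined with mass conservation and $\|\nabla u\|^2\le2H(u)$, a Gronwall argument gives $\sup_{[0,T]}\|u(t)\|_{\mathbb H^1}\le C(\|u_0\|_{\mathbb H^1},\|\mathcal V\|_{W^{1,\infty}},\sup_{[0,T]}|\beta|)$ almost surely.

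The globalization then rests on the subcritical structure of Lemma \ref{lm-non-est}: because the high norm $\|u\|_{X^{\bs,b}}$ appears only linearly while the other two factors sit at the $\mathbb H^1$ level, the local existence time can be taken to depend only on the now-controlled $\mathbb H^1$ norm. On each subinterval of a fixed length $\tau=\tau(A)$ with $A=\sup_{[0,T]}\|u(t)\|_{\mathbb H^1}$, the $\mathbb H^1$-level estimate closes to give $\|u\|_{X_\tau^{1,b}}\le2CA$, and then the $\mathbb H^{\bs}$-level estimate gives $\|u\|_{X_\tau^{\bs,b}}\le2C\|u(t_0)\|_{\mathbb H^{\bs}}$. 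Iterating over the $\lceil T/\tau\rceil$ subintervals propagates the $\mathbb H^{\bs}$ bound to all of $[0,T]$, so that $u\in\mathcal C([0,T];\mathbb H^{\bs})$ almost surely, via the embedding $X_T^{\bs,b}\hookrightarrow\mathcal C([0,T];\mathbb H^{\bs})$.

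The main obstacle I anticipate is making the Bourgain-space estimates rigorous so that the local time is governed by the $\mathbb H^1$ norm alone, and cleanly handling the nonautonomous potential $\mathcal V\beta(t)$, either by treating it directly as a perturbation in $X_T^{\bs,-b'}$ or by an integrating-factor transform, while respecting the constraints $0<b'<\tfrac12<b$ and $b+b'<1$ that both the trilinear estimate and the time-localization demand. Everything else, namely the pathwise continuity of $\beta$, the two conservation laws, and the iteration count, is routine once these estimates are in place.
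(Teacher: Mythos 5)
Your proposal follows essentially the same route as the paper: a Duhamel fixed-point argument in $X_{T_1}^{\bs,b}$ based on Lemma \ref{lm-non-est}, pathwise mass and energy control of $\sup_{[0,T]}\|u(t)\|_{\mathbb H^1}$ in the defocusing case, and iteration of the local theory exploiting that the trilinear estimate is linear in the high norm so the existence time depends only on the $\mathbb H^1$ data and the noise path. The only (immaterial) difference is that you treat $\mathcal V\beta(t)u$ as a bounded time-dependent potential via $\sup_{[0,T]}|\beta|$, whereas the paper places $B$ in $X^{\bs,\frac 12-\epsilon}_{T}$ using the H\"older continuity of Brownian paths and absorbs that term through the $T_1^{1-b-b'}$ time-localization gain.
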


\begin{proof}
The proof of the well-posedness of the additive noise case is similar to that of \cite[Theorem 3.4]{BD03} and thus is omitted. 
Below, we only present the proof of the multiplicative noise case. 
Let $T_1$ be a sufficient small number which will be determined later.
According to Duhamel's principle, we define a map $\Gamma$ from $X_{T_1}^{\bs,b}$ to itself by
$$\Gamma v= S(t)u(0)+\int_0^t S(t-s) f(v(s))ds+\int_0^t S(t-s) g(v(s))B(s)ds.$$
By using \cite[Proposition 2.11]{BGT05}, Lemma \ref{lm-non-est} and $X_T^{\bs,\frac 14}  \hookrightarrow  L^4([0,T]; \mathbb H^\bs)$, we achieve that 
\begin{align*}
\|S(t) u(0)\|_{X^{\bs,b}_{T}}&\le C\|u(0)\|_{\mathbb H^{\bs}},\\
\Big\|\int_0^t S(t-r) f(v(r)) dr\Big\|_{X^{\bs,b}_{T_1}}&\le CT_1^{1-b-b'} \|v\|_{X^{\bs,b}_{T_1}}^3,\\
\Big\|\int_0^tS(t-r) g(v(r))B(r)  dr\Big\|_{X^{\bs,b}_{T_1}}& \le C T_1^{1-b-b'}  \|v\|_{X^{\bs,b}_{T_1}} \|B\|_{X^{\bs,\frac 14}_{T_1}}. 
\end{align*}
Since $\beta(t)$ is $(\frac 12-\epsilon)$-H\"older continuous, $\epsilon\in (0,\frac 12)$, and $\mathcal V\in \mathbb W^{\bs,\infty}$, we have that $\|B\|_{X^{\bs,\frac 12-\epsilon}_{T}}<\infty, a.s.$
Thus, by taking $T_1\sim \min(\|u(0)\|_{\mathbb H^{\bs}}^{-\frac 2{1-b-b'}},\|B\|_{X^{\bs,\frac 14}_{T}}^{-\frac 1{1-b-b'}})$ and applying the fixed point theorem, it follows that the unique fixed point $u  \in {X^{\bs,b}_{T_1}}$ satisfies $\|u(t)\|_{X^{\bs,b}_{T_1}}\le C\|u(0)\|_{\mathbb H^{\bs}}.$

Next, we show the global existence of the solution when $\bs= 1.$ 
By the above arguments, it follows that there exists $T_1(\|u(0)\|_{\mathbb H^1}, \|B\|_{X^{1,\frac 14}_{T}})$ such that $\|u(t)\|_{X^{1,b}_{T_1}}\le C\|u(0)\|_{\mathbb H^{1}}, \;a.s.$
By applying the chain rule, H\"older's and Young's inequalities, one can obtain the finiteness  of mass and energy in any $p$-moment sense. 
This, together with $\lambda=-1$,  implies that 
\begin{align*}
\sup_{t\in [0,\tau]}\|u(t)\|_{\mathbb H^1} \le C_2(T,\|u(0)\|_{\mathbb H^1}), \; a.s.,
\end{align*}
where $C_2(T,\|u(0)\|_{\mathbb H^1})$ is a random variable independent of $\tau\in [0,T]$. As a consequence, we could extend the local solution to the global solution almost surely. 
When $\bs\ge 1$, applying Lemma \ref{lm-non-est}, it follows that there exists  $$T_1 \sim \min (C_2(T,\|u(0)\|_{\mathbb H^1})^{-\frac 2{1-b-b'}},\|B\|_{X^{1,\frac 14}_{T}}^{-\frac 2{1-b-b'}})$$ such that the solution exists and satisfies $
\|u\|_{X_{T_1}^{\bs,b}}\le C \|u(0)\|_{\mathbb H^{\bs}}.$ 
As $T_1$ depends only on $\|u(0)\|_{\mathbb H^{1}}$ and $\|B\|_{X^{1,\frac 14}_{T}}$, by elementary iterating process, the solution is global. 
\end{proof}

Due to loss of the moment estimate of $\|u(\cdot)\|_{\mathbb H^{\bs}}, \bs >1,$ or $\|u(\cdot)\|_{L^{\infty}}$, we only have the following result without any explicit decay rate on the tail estimate of \eqref{SNLS}.

\begin{cor}\label{3d-cor}
Under the condition of Proposition \ref{3d-prop}, it holds that 
\begin{align*}
\lim_{R_1\to\infty} \mathbb P(\sup_{r\in [0,T]} \|u(r)\|_{\mathbb H^{\bs}}\ge R_1)=0.
\end{align*}
Furthermore, when $\bs> \frac d2,$ it holds that
\begin{align*}
\lim_{R_1\to\infty} \mathbb P(\sup_{r\in [0,T]} \|u(r)\|_{L^{\infty}}\ge R_1)=0.
\end{align*}
\end{cor}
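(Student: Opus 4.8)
The plan is to derive both tail estimates purely from the almost-sure global existence supplied by Proposition \ref{3d-prop}, using no moment bound whatsoever. The starting observation is that when $\bs\ge 1$ we are in the global well-posedness regime, so the solution lies in $\mathcal C([0,T];\mathbb H^{\bs})$ almost surely; by compactness of $[0,T]$ together with the continuity of $r\mapsto u(r)$ in $\mathbb H^{\bs}$, the nonnegative random variable
\[
X:=\sup_{r\in[0,T]}\|u(r)\|_{\mathbb H^{\bs}}
\]
is finite almost surely. For the remaining range $\bs_0<\bs<1$ one instead restricts the global $\mathbb H^1$ solution furnished by the proposition and invokes the embedding $\mathbb H^{1}\hookrightarrow \mathbb H^{\bs}$, which yields the same almost-sure finiteness of $X$ since $\sup_{r\in[0,T]}\|u(r)\|_{\mathbb H^{\bs}}\le \sup_{r\in[0,T]}\|u(r)\|_{\mathbb H^{1}}<\infty, a.s.$

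Next I would record that the events $A_{R_1}:=\{X\ge R_1\}$ decrease as $R_1\uparrow\infty$ and satisfy $\bigcap_{n\in\mathbb N^+}A_{n}=\{X=\infty\}$, which is $\mathbb P$-null by the almost-sure finiteness of $X$. Since $r\mapsto \mathbb P(X\ge r)$ is monotone, continuity from above of the probability measure then gives
\[
\lim_{R_1\to\infty}\mathbb P\big(\sup_{r\in[0,T]}\|u(r)\|_{\mathbb H^{\bs}}\ge R_1\big)=\mathbb P(X=\infty)=0,
\]
which is precisely the first assertion.

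For the $L^{\infty}$ tail, when $\bs>\frac d2$ the Sobolev embedding $\mathbb H^{\bs}\hookrightarrow L^{\infty}(\mathcal O)$ on the compact manifold $\mathcal O$ supplies a constant $C>0$ with $\|u(r)\|_{L^{\infty}}\le C\|u(r)\|_{\mathbb H^{\bs}}$ for every $r$. Consequently $\sup_{r\in[0,T]}\|u(r)\|_{L^{\infty}}\le C X<\infty$ almost surely, and the identical continuity-from-above argument applied to the decreasing family $\{C X\ge R_1\}$ delivers the second limit.

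The point worth emphasizing, rather than any technical difficulty, is the contrast with Corollaries \ref{1d-cor} and \ref{2d-cor}: here no quantitative decay rate can be extracted. The preceding analysis provides only pathwise global bounds and no control on $\E[\sup_{r\in[0,T]}\|u(r)\|_{\mathbb H^{\bs}}^{p}]$ for $\bs>1$ or on $\E[\sup_{r\in[0,T]}\|u(r)\|_{L^{\infty}}^{p}]$, so Chebyshev's inequality is simply unavailable and continuity from above is the only mechanism at hand. This is therefore the sole obstacle, and it reflects a genuine limitation of the Bourgain-space method in this setting rather than a gap to be filled by a sharper estimate.
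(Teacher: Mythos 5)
Your argument is correct and is exactly the (implicit) proof the paper intends: Proposition \ref{3d-prop} gives almost-sure global existence in $\mathcal C([0,T];\mathbb H^{\bs})$, hence $\sup_{r\in[0,T]}\|u(r)\|_{\mathbb H^{\bs}}<\infty$ a.s., and continuity from above of $\mathbb P$ together with the Sobolev embedding $\mathbb H^{\bs}\hookrightarrow L^{\infty}$ for $\bs>\frac d2$ yields both limits; the paper states the corollary without proof precisely because no quantitative moment bound is available. The only minor imprecision is your treatment of $\bs_0<\bs<1$: there the proposition furnishes only a local solution (and no global $\mathbb H^1$ solution to restrict unless $\varPsi\in\mathbb H^1$), so the corollary should be read in the global regime $\bs\ge 1$, where your argument applies verbatim.
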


\section{Divergence phenomenon of explicit numerical methods for SNLSEs}\label{divergence}
Since the solution of SNLSE usually does not have an analytical expression, many researchers have been devoted to approximating it numerically and the structure-preserving numerical method has become one popular choice to discretize SNLSEs. 
In this section, we illustrate why the structure-preserving, truncated and  tamed strategies are needed to approximate SNLSEs via several explicit schemes.   We would like to mention that the divergence phenomenon of explicit  methods for SDEs with polynomial coefficients has been reported in \cite{MR2795791}. But less result is known in the infinite-dimensional case.

Below we prove the the divergence of  a class of explicit schemes for Eq.	\eqref{SNLS}, including 
the exponential Euler scheme
\begin{align}\label{exp-euler}
u_{n+1}^{(1)}=S(\delta t)\Big(u_{n}^{(1)}+\delta  tf(u_n^{(1)})+ g(u_n^{(1)})\delta _nW-\delta  t\alpha u_n^{(1)}\Big),
\end{align}
the linear-implicit modified mid-point scheme
\begin{align}\label{semi1}
u_{n+1}^{(2)}=S_{\delta t}u_{n}^{(2)}+ S_{\delta t} \delta  t f(u_n^{(2)})+ S_{\delta t}g(u_n^{(2)})\delta _nW-\delta  t \mathcal{T}_{\delta t} \alpha u_n^{(2)},
\end{align}
where $S_{\delta t}:=\frac {1+\frac12\bi\Delta\delta t }{1-\frac 12\Delta\bi \delta t}$ and $\mathcal T_{\delta t}:=\frac1{1-\frac 12\Delta\bi\delta t}$,
and the linear-implicit modified Euler scheme
\begin{align}\label{semi2}
u_{n+1}^{(3)}= \widehat T_{\delta t}u_{n}^{(3)} + S_{\delta t} \delta  t f(u_n^{(3)})+  S_{\delta t} g(u_n^{(3)})\delta _nW- \delta  t \widehat {\mathcal T}_{\delta t} \alpha u_n^{(3)},
\end{align}
where $\widehat {\mathcal T}_{\delta t}=\frac 1{1-\Delta\bi\delta t}$.
Here $n\le N, n\in \mathbb N, T=N\delta t.$
The following properties of these semi-groups 
\begin{align}\label{sem-gr}
\|S(\delta t)\|_{\mathcal L(\mathbb H,\mathbb H)}=\|S_{\delta t}\|_{\mathcal L(\mathbb H,\mathbb H)}=1, \ \|\widehat {\mathcal T}_{\delta t}\|_{\mathcal L(\mathbb H,\mathbb H)}\le C_0,  \ \|{\mathcal T}_{\delta t}\|_{\mathcal L(\mathbb H,\mathbb H)}\le C_0
\end{align}
for some $C_0>0$
 will be frequently used in this section.

\begin{lm}\label{div-num}
Let  $T>0,$ $d\ge 1,$ $\varPsi$ be an  $L^{4\sigma+2}$-valued random variable with $\sigma>0$ and $$\P\Big( \min\limits_{x\in \mathcal O}|g(\varPsi(x))|\neq 0\Big)>0.$$ Assume that $Q$ commutes with the Laplacian operator on $\mathcal O$ and $Q^{\frac 12}\in \mathcal L_2^{\bs}$ with $\bs=0$ in the additive noise case and $\bs>\frac d2$ in the multiplicative noise case. 
There exists a constant $c\in (1,\infty)$, $\epsilon\in (0,2\sigma)$ and  
a sequence of non-empty events $\Omega_N \in \FFF$, 
$N\in \N^+$, with $\mathbb P[\Omega_N]\ge c^{(-N)^c}$ and $u_N^{(l)}(\omega)\ge 2^{(2\sigma+1-\epsilon)^{(N-1)}}$ for $\epsilon>0$ small enough and $\omega \in \Omega_N$, $N \in \N^+$.
Moreover, the numerical approximations satisfy 
$\lim_{N\to \infty} \E[\|u_N^{(l)}\|^p]=\infty$, $l=1,2,3$,  for  $p\in [1,\infty)$.
\end{lm}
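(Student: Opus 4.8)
The plan is to transplant the double-exponential blow-up mechanism of \cite{MR2795791} to the infinite-dimensional setting, treating the three schemes \eqref{exp-euler}--\eqref{semi2} simultaneously by means of the uniform semigroup bounds \eqref{sem-gr}. The heart of the matter is that the superlinear drift $f(v)=\bi\lambda|v|^{2\sigma}v$ satisfies, on the bounded domain $\mathcal O$, the $L^2$ lower bound $\|f(v)\|=\|v\|_{L^{4\sigma+2}}^{2\sigma+1}\ge|\mathcal O|^{-\sigma}\|v\|^{2\sigma+1}$, which comes from Hölder's inequality $\|v\|\le|\mathcal O|^{\sigma/(2\sigma+1)}\|v\|_{L^{4\sigma+2}}$. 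Since in each of \eqref{exp-euler}--\eqref{semi2} the semigroup acting on the nonlinear increment is an $L^2$-isometry (by \eqref{sem-gr}) while the remaining linear pieces contribute only $O(\|u_n^{(l)}\|)$, a single reverse-triangle argument will produce a per-step recursion valid for all three schemes at once.

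First I would derive this per-step growth inequality. Fix $l\in\{1,2,3\}$ and set $b_n:=\log_2\|u_n^{(l)}\|$. On an event where the increment is moderate I have a unified bound $\|g(u_n^{(l)})\delta_n W\|\le C'(1+\|u_n^{(l)}\|)$: in the additive case ($\bs=0$) this is just $\|\delta_n W\|\le C'$, and in the multiplicative case, using $\mathbb H^{\bs}\hookrightarrow L^\infty$ for $\bs>\frac d2$, it follows from $\|u_n^{(l)}\delta_n W\|\le C\|u_n^{(l)}\|\,\|\delta_n W\|_{\mathbb H^{\bs}}$ with $\|\delta_n W\|_{\mathbb H^{\bs}}\le C'$. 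The reverse triangle inequality together with \eqref{sem-gr} then gives $\|u_{n+1}^{(l)}\|\ge \delta t\,|\mathcal O|^{-\sigma}\|u_n^{(l)}\|^{2\sigma+1}-C\bigl(1+\|u_n^{(l)}\|\bigr)$, so once $\|u_n^{(l)}\|$ passes a threshold making the nonlinear term dominate one obtains $\|u_{n+1}^{(l)}\|\ge\frac12\delta t|\mathcal O|^{-\sigma}\|u_n^{(l)}\|^{2\sigma+1}$, i.e.\ $b_{n+1}\ge(2\sigma+1)b_n-\log_2 N-C_1$ since $\delta t=T/N$. The $-\log_2 N$ shift is exactly what forces the loss from $2\sigma+1$ to $2\sigma+1-\epsilon$: whenever $b_n\ge\theta_N:=(\log_2 N+C_1)/\epsilon$ one has $b_{n+1}\ge(2\sigma+1-\epsilon)b_n\ge\theta_N$, so the bound self-propagates and $b_N\ge(2\sigma+1-\epsilon)^{N-1}b_1\ge(2\sigma+1-\epsilon)^{N-1}$.

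Next I would build the events and estimate their probability. Set $\Omega_N:=\{\|u_1^{(l)}\|\ge 2^{\theta_N}\}\cap\bigcap_{n=1}^{N-1}\{\|\delta_n W\|_{\mathbb H^{\bs}}\le C'\}$. For the first factor, on the positive-probability event $\{\min_x|g(\varPsi(x))|\ge\kappa\}\cap\{\|\varPsi\|+\delta t\|f(\varPsi)\|\le K\}$ (the second set having probability close to one since $\varPsi\in L^{4\sigma+2}$) the reverse triangle inequality yields $\|u_1^{(l)}\|\ge\kappa\|\delta_0 W\|-K$ for all three schemes (with $K$ enlarged by the factor $C_0$ from \eqref{sem-gr} for $l=3$), so it suffices to make $\|\delta_0 W\|$ large. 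Using the Brownian scaling $\delta_0 W\stackrel{d}{=}N^{-1/2}W(T)$, that $Q$ commutes with the Laplacian (so $W(T)$ has independent Gaussian coordinates $\sqrt{\lambda_k}\beta_k(T)$), and the one-mode Gaussian tail lower bound, I get $\P(\|u_1^{(l)}\|\ge 2^{\theta_N})\ge c_1\exp(-\beta N\,2^{2\theta_N})\ge c_1\exp(-c_2 N^{1+2/\epsilon})$ because $2^{\theta_N}\sim N^{1/\epsilon}$. The increments $\{\delta_n W\}_{n\ge0}$ being independent, and $\E\|\delta_n W\|_{\mathbb H^{\bs}}^2=\delta t\,\|Q^{1/2}\|_{\mathcal L_2^{\bs}}^2\to0$, the control factors each have probability tending to $1$, whence $\P(\Omega_N)\ge c^{-N^c}$ for a suitable $c\in(1,\infty)$ and all large $N$; in particular $\Omega_N\neq\emptyset$.

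Finally, on $\Omega_N$ the induction above gives $\|u_N^{(l)}\|\ge 2^{(2\sigma+1-\epsilon)^{N-1}}$, so
\[
\E\big[\|u_N^{(l)}\|^p\big]\ge 2^{\,p(2\sigma+1-\epsilon)^{N-1}}\,\P(\Omega_N)\ge\exp\big(p\log2\,(2\sigma+1-\epsilon)^{N-1}-N^c\log c\big).
\]
Because $\epsilon\in(0,2\sigma)$ forces $2\sigma+1-\epsilon>1$, the first term in the exponent grows doubly exponentially in $N$ while the subtracted term grows only like a fixed power of $N$, so the right-hand side diverges and $\lim_{N\to\infty}\E[\|u_N^{(l)}\|^p]=\infty$. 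I expect the main obstacle to be the bookkeeping that keeps all three schemes under one recursion while tracking the $\log N$ shift, and in particular calibrating the first-step threshold $2^{\theta_N}\sim N^{1/\epsilon}$ so that it is simultaneously large enough to ignite and sustain the super-geometric growth of $b_n$ and small enough that $\P(\Omega_N)$ decays only like $c^{-N^c}$, which is still beaten by the doubly-exponential moment lower bound. A secondary technical point is ensuring the iterates retain enough $L^{4\sigma+2}$-integrability for $f(u_n^{(l)})$ to be meaningful, which is where the hypotheses $\varPsi\in L^{4\sigma+2}$ and $Q^{1/2}\in\mathcal L_2^{\bs}$ with $\bs>\frac d2$ enter.
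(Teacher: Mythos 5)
Your proposal is correct and follows essentially the same route as the paper's proof: the Hölder lower bound $\|f(v)\|\ge|\mathcal O|^{-\sigma}\|v\|^{2\sigma+1}$, the reverse-triangle per-step recursion combined with the unitarity bounds \eqref{sem-gr}, events built from one large first increment (whose probability is bounded below via a single Gaussian mode, using that $Q$ commutes with the Laplacian) together with controlled later increments, the $\epsilon$-loss from exponent $2\sigma+1$ to $2\sigma+1-\epsilon$ to absorb the $\delta t\sim N^{-1}$ factor via a threshold of order $N^{1/\epsilon}$, and the final comparison of the doubly exponential lower bound against $\mathbb P(\Omega_N)\ge c^{-N^c}$. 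The only deviation is cosmetic: you constrain the later increments by a fixed constant $C'$ (so each control event has probability near one) whereas the paper confines $\|\delta_nW\|_{\mathbb H^{\bs}}$ to $[\tfrac12\delta t,\delta t]$; both choices sustain the induction and yield the required probability lower bound.
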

\begin{proof}
For simplicity, we only present the result of $u^{(1)}_N$ since the proof of the other cases is similar.  We first show the divergence in the multiplicative noise case.
Denote $m(\mathcal O)$  the Lebesgue measure of $\mathcal O.$
Notice that for the polynomial  $\delta tm(\mathcal O)^{-\sigma} \xi ^{2\sigma}-\delta t(\frac 1 2 \|\alpha\|_{L^{\infty}}+1)-1$,  we can always find  $C' \ge  \frac {2\delta t(\frac 1 2 \|\alpha\|_{L^{\infty}}+1)+2}{\delta t} $$ m(\mathcal O)^{\sigma}$ and  $\epsilon\in (0,2\sigma)$ 
 such that for $\xi \ge \max(C', (\frac {2m(\mathcal O)^{\sigma}}{\delta t})^{\epsilon})$, it holds that 
\begin{align*}
\delta t m(\mathcal O)^{-\sigma}\xi ^{2\sigma}-\delta t(\frac 1 2 \|\alpha\|_{L^{\infty}}+1)-1\ge \frac 12 \delta t m(\mathcal O)^{-\sigma} \xi^{2\sigma}\ge  \xi^{2\sigma-\epsilon}.
\end{align*}
For $C_0'=\frac 12 \|\alpha\|_{L^{\infty}}+1$, we define 
\begin{align*}
r_N:=\max\Bigg(2, C_0', \Big(\frac 2{\delta t}+C_0'\Big)^{\frac 1{2\sigma}}, C', (\frac {2m(\mathcal O)^{\sigma}}{\delta t})^{\epsilon} \Bigg)
\end{align*}
and the events $\Omega_N \in \FFF$, $N \in \N^+$ by
\begin{align*}
\Omega_N&:=\bigg\{\omega\in \Omega \big| \|\delta _nW\|_{\mathbb H^{\bs}}\in [\frac 12\delta t,\delta t],  n\in \mathbb Z_{N-1}/\{0\}, \|\delta W_0\| \ge K(r_N+K),\\
&\qquad\quad \min_{x\in\mathcal O} |\varPsi(x)| \ge \frac 1K, \|u_0\|+T\|f(u_0)-\alpha u_0\|\le K \bigg\},
\end{align*} 
where  $K>1$, $\bs=0$ for additive noise and $\bs>\frac d2$ for the multiplicative noise. 

%Since
%\begin{align*}
%\|g(\varPsi)\delta W_0\|\ge \min_{x\in \mathcal O}|\varPsi(x)|\|\delta W_0\|,
%\end{align*}

Now, we use induction arguments to show that $\|u_n(\omega)\|\ge (r_N)^{(2\sigma+1-\epsilon)^{n-1}}$, $n\in \mathbb Z_N/\{0\}$ for given $\omega \in \Omega_N$. 
For $n=1$, the definition of $\Omega_N$ and \eqref{sem-gr} yields that 
\begin{align*}
 \|u_1^{(l)}(\omega)\|
  &\ge \|g(\varPsi)\delta W_0(\omega)\|-\|u_0(\omega)\|-\delta t \|f(u_0(\omega))-\alpha u_0(\omega)\|\\
 &\ge \frac 1K \|\delta W_0(\omega)\|-K\ge r_N\ge C\ge 1.
\end{align*}
 By inductions, assume that $\|u_n^{(l)}(\omega)\|\ge (r_N)^{(2\sigma+1-\epsilon)^{n-1}}$ holds for the first $n$ steps.
The unitarity  of $S(t)$ and H\"older's inequality  yield  that
\begin{align*}
&\|S(\delta t)(f(u)-\frac 12 \alpha)\|\\
&\ge \| |u|^{2\sigma}u\|-\frac 12\|\alpha\|_{L^\infty}\|u\|=\|u\|_{4\sigma+2}^{2\sigma+1}-\frac 12\|\alpha\|_{L^\infty}\|u\|\\
&\ge m(\mathcal O)^{-\sigma}\|u\|^{2\sigma+1}-\frac 12\|\alpha\|_{L^\infty}\|u\|.
\end{align*}
 To prove the estimate in the $(n+1)$-th step, by H\"older's inequality and the Sobolev embedding theorem, we have 
\begin{align*}
\|u^{(l)}_{n+1}(\omega)\|&
\ge \delta t\|f(u^{(l)}_{n}(\omega))-\alpha u_n(\omega)\|
-\|g (u^{(l)}_{n}(\omega))\delta _nW(\omega)\|-\|u_n^{(l)}(\omega)\|\\
&\ge  m(\mathcal O)^{-\sigma}\|u_n^{(l)}(\omega)\|^{2\sigma+1}\delta t-\frac 12\|\alpha\|_{L^\infty}\|u_n^{(l)}(\omega)\|\delta t-\delta t\|u^{(l)}_n(\omega)\|-\|u_n^{(l)}(\omega)\| \\
&\ge \|u^{(l)}_n(\omega)\|\Big(m(\mathcal O)^{-\sigma} \delta t\|u^{(l)}_n(\omega)\|^{2\sigma }-\delta t(\frac 1 2 \|\alpha\|_{L^{\infty}}+1) -1\Big).
\end{align*}
Therefore, by the definition of $r_N$, we get  
\begin{align*}
\|u^{(l)}_{n+1}(\omega)\|&\ge\|u^{(l)}_n(\omega)\|\Big(m(\mathcal O)^{-\sigma}\delta t\|u^{(l)}_n(\omega)\|^{2\sigma }-(\frac 1 2 \|\alpha\|_{L^{\infty}}+1)\delta t-1\Big)\\
&\ge \|u^{(l)}_n(\omega)\|^{2\sigma+1-\epsilon}.
\end{align*} 
Thus we can obtain 
\begin{align*}
\|u_N^{(l)}(\omega)\|\ge\|u^{(l)}_{N-1}(\omega)\|^ { 2\sigma+1-\epsilon}\ge (r_N)^{({2\sigma+1-\epsilon})^N}\ge 2^{({2\sigma+1-\epsilon})^N}
\end{align*}
for  $\omega \in \Omega_N$.

Next we give the lower bound of $\mathbb P(\Omega_N)$. 
Denote $\mathcal V_1:=\P\Big[\min\limits_{x\in \mathcal O}|\varPsi (x)|$ $\ge \frac 1K, \|\varPsi\|+T\|f(\varPsi)-\frac 12\alpha \varPsi\|\le K \Big]>0$.
Using the lower bound of the tail estimate of normal distribution $Z$, i.e., for $x\in [1,\infty)$, $\mathbb P(|Z|\ge x)\ge \frac 14 xe^{-x^2},$  it holds that  for $k\in \N^+$ and $Q^{\frac 12}e_k\neq 0,$
\begin{align}\label{exp-tai}
\P\Big[ T^{-\frac 12}\| W(t)\|\ge x \Big]
\ge\P\Big [T^{-\frac 12} |\beta_k(T)|\ge \|Q^{\frac 12}e_k\|^{-1}x\Big] 
\ge \frac  x{4\|Q^{\frac 12}e_k\| } \exp^{(-\|Q^{\frac 12}e_k\|^{-2}x^2)}.
\end{align}

Applying  \eqref{sem-gr}, \eqref{exp-tai}, the independent increments of the Wiener process and the property that $\frac {W(t)}{N^{\frac 12}}$ has the same distribution of $\delta W_0$, as well as $\mathbb P(|Z|\in [\frac 12 x, x])\ge \frac {x e^{-\frac 12x^2}}4$,  
we achieve that for some $k\in \mathbb N^+$ and $Q^{\frac 12}e_k\neq0,$
\begin{align*}
\mathbb P(\Omega_N)&\ge \mathcal V_1 \mathbb P\Big[\|\delta W_0\|\ge K(r_N+K)\Big]
\Big(\mathbb P\Big[\|\delta W_0\|_{\mathbb H^{\bs}}\in [ \frac 12 \delta t,\delta t]\Big]\Big)^{N-1}\\
&\ge \mathcal V_1 \frac {K(r_N+K)}{4\|Q^{\frac 12}e_k\| \delta t^{\frac 12}}\exp\Big(-\frac {K^2(r_N+K)^2\|Q^{\frac 12}e_k\|^2}{\delta t}-T\|Q^{\frac 12}e_k\|^2_{\mathbb H^{\bs}}\Big)\\
&\quad \times \Big(\frac {\delta t^{\frac 12}}{4 \|Q^{\frac 12}e_k\|_{\mathbb H^{\bs}}}\Big)^N\\
&\ge \frac {\mathcal V_1 (\delta t)^{\frac {N-1}2}e^{-T\|Q^{\frac 12}e_k\|^2_{\mathbb H^{\bs}}} } {(4^{N+1}\|Q^{\frac12}e_k\|_{\mathbb H^{\bs}})^{N}\|Q^{\frac12}e_k\|}
\exp\Big(-\frac {K^2(r_N+K)^2\|Q^{\frac 12}e_k\|^2_{\mathbb H^{\bs}}}{\delta t}\Big).
\end{align*}
Since $N\log N\le N^c$, for  $c \in (1,\infty)$, there exists 
a constant $c \in (1,\infty)$ such that 
\begin{align*}
\mathbb P[\Omega_N]\ge c^{-(N)^c}
\end{align*}
for $N\in \N^+$.
The lower bound of $u^N_n$ on $\Omega_N$ and the lower bound of $\mathbb P(\Omega_N)$ yield that for any $p\ge 1$,
\begin{align*}
\lim_{N\to \infty}\E [\|u^{(l)}_N\|^p]&\ge\lim_{N\to \infty} \Big(\mathbb P(\Omega_N) (r_N)^{p(2\sigma+1-\epsilon)^{N-1}}\Big)\\
&\ge \lim_{N\to \infty} \Big(\mathbb P(\Omega_N) 2^{p(2\sigma+1-\epsilon)^{N-1}}\Big)\\
&\ge \lim_{N\to \infty} \Big(c^{(-N^c)} 2^{p(2\sigma+1-\epsilon)^{N-1}}\Big)=\infty.
\end{align*}
For the additive noise case, the proof is similar. The main difference is that    the definition of $\gamma_N$ 
will rely on the polynomial $\delta tm(\mathcal O)^{-\sigma} \xi ^{2\sigma+1}-\delta t \frac 1 2 \|\alpha\|_{L^{\infty}}\xi-\delta t-\xi.$ We omit the tedious and analogous procedures for simplicity.
\end{proof}

\iffalse
\begin{rk}
Following the arguments in this section, one can also establish the strong  divergence of the full discretization which use explicit temporal numerical schemes \eqref{exp-euler}-\eqref{semi2} and some spatial discretization. In \eqref{semi1} and \eqref{semi2}, one may also replace $S_{\delta t}$ by a rational approximation, such as $\mathcal T_{\delta t}$ or $\widehat {\mathcal T}_{\delta t}$. In this case, the spatial discretization is essentially needed to show the divergence behaviors since $\mathcal T_{\delta t}$ and $\widehat{\mathcal T}_{\delta t}$ do not have a non-trivial lower bound. 
\end{rk}
\fi

\begin{rk}
The proposed approach can also be extended to prove the divergence of explicit schemes for general stochastic Schr\"odinger nonlinear equations whose drift and diffusion coefficients have different growing speeds, i.e.,
there exist $C_1 >0, C_2\ge 1,$ $\beta>1, \beta>\alpha\ge 0$,
\begin{align*}
\max (\|f(v)\|,\|g(v)\|)\ge C_1 \|v\|^{\beta}, \; \text{and}\; \min (\|f(v)\|,\|g(v)\|)\le C_2\|v\|^{\alpha},
\end{align*}
for all $\|x\|\ge C$ with a large enough number $C>0.$
\end{rk}

\section{Explicit numerical methods}
\label{sec-scheme}

Roughly speaking, there exist two popular approaches to treat the divergence problem of numerical methods for SDEs. One can use some stable implicit discretizations such that the $p$-moment of numerical solutions is finite (see, e.g., \cite{MR1949404}). This approach often requires the solvability of underlying implicit schemes. Besides, the overall error estimate of the implementable algorithm involved with Newton's iteration or Picard's iteration is not clear for non-monotone SPDEs in general.  Another approach to overcome the divergence issue is constructing different variants of the explicit schemes, such as the truncated/tamed strategy and the adaptive mesh method (see, e.g., \cite{MR3129758,MR3829168}). 
In this section, we focus on the second approach and will construct and analyze strongly  convergent explicit schemes for SNLSEs.

\subsection{Construction}
Via the Lie--Trotter splitting (see e.g. \cite{CHS21}), \eqref{SNLS} can be split into several stochastic Hamiltonian systems (see e.g. \cite{Liu13,CHLZ19}). Following this structure-preserving strategy, we first propose an explicit splitting scheme which reads 
\begin{align}\label{spl}
u_{n+1}^M&=\Phi_{S,n}^M(\delta t, \Phi_{D,n}^{M}(\delta t, u_n^M)),
\end{align}
where $\Phi_{D,n}^{M},\Phi_{S,n}^{M}, t\in [t_n,t_{n+1}]$ are the phase flows of the following subsystems 
\begin{align}\label{sub-det}
&dv_{D,n}^M(t)=\bi \Delta  v_{D,n}^M (t)dt, \; v_{D,n}^M(t_n)= u^M_{n}, u_{S,n}(t_n)=v_{D,n}^M(t_n), \\\label{sub-sto}
&du_{S,n}^M(t)=P^M\Big(\bi \lambda |u_{S,n}^M(t_n)|^{2\sigma}u_{S,n}^M(t)-\frac 12\alpha u_{S,n}^M(t)\Big)dt+ P^M g(u_{S,n}^M(t)) dW(t),
\end{align}  
respectively. 
Here $n\le N$, $N\delta t=T$, $u_0^M=P^M \varPsi$ and $P^M$ is the spectral Galerkin projection operator with $M\in \mathbb N^+$.
We would like to remark that \eqref{sub-sto} is a linear system and thus it is analytically solvable. One can easily construct different explicit splitting numerical schemes via this splitting idea. 
For instance, splitting \eqref{SNLS} into three parts yields the following scheme
\begin{align*}
u_{n+1}^M=\widehat \Phi_{S,n}^M(\delta t, \Phi_{f,n}^M(\delta t,  \Phi_{D,n}^{M}(\delta t, u_n^M))),
\end{align*} 
where $\widehat \Phi_{S,n}^M,  \Phi_{f,n}^M, \Phi_{D,n}^{M}$ correspond to the subsystems 
\begin{align*}
&d x(t) = \bi P^M \Delta x(t)dt,\\
&d y(t) = \bi P^M(\lambda |y(t_n)|^{2\sigma} y(t)-\frac 12 \alpha y(t))dt, \\
&d z(t) = \bi P^M g(z(t))dW(t).
\end{align*}

Another popular way to construct stable explicit scheme is using  truncated/tamed method.  For example, applying the truncated strategy to the standard exponential Euler schemes leads to  the nonlinearity-truncated fully discrete exponential Euler scheme,
{\small
\begin{align*}
u_{n+1}^M=S(\delta t) P^M u_n+\mathbb I_{\{\|u_{n}^M\|_{\mathbb H^{\kappa}}\le R \}}S(\delta t) P^M (f(u_n^M)-\frac 12 \alpha u_n^M)\delta t +S(\delta t) P^M g(u_{n}^M)\delta _nW,
\end{align*}
}
where the indicator function is defined on $\Omega$. One may change the truncated norm $\mathbb H^{\kappa}$, $\kappa\in \mathbb N^+$, and set the scale of  $R$ to find the suitable relationship of $R, \delta t, M$ such that this scheme enjoys the desirable higher regularity. 

When $g(\xi)=\bi \xi,$ the proposed splitting method \eqref{spl} is a kind of the nonlinearity-truncated scheme due to the property of the spectral Galerkin method (see e.g. \cite[Subsection 4.1]{CHLZ19}) that 
\begin{align}\label{mass-decay}
\|u_{n}^M\|\le \|u_0^M\|, \; a.s.
\end{align}
In the following,  we will focus on \eqref{spl} and present the detailed steps for proving the strong convergence of explicit numerical methods of SNLSEs. Following similar steps, the convergence analysis of other explicit splitting numerical methods and the nonlinearity-truncated fully discrete exponential Euler scheme could be established. For simplicity, we omitted the tedious calculations for these schemes. 

\iffalse
We would like to remark that to ensure the energy stability of the nonlinearity truncated exponential Euler scheme, we need to apply some spatial discretization first. For instance, applying the spatial spectral Galerkin method 
leads to  the  following nonlinearity truncated fully discrete exponential Euler scheme 
\begin{align}\label{non-trun-exEuler-full}
u_{n+1}^M&=S(\delta t)u_n^M+\mathbb I_{\{\|u_{n}\|_{\mathbb H^{\kappa}}\le R \}}^{\Omega}S(\delta t) P^M(f(u_n^M)-\frac 12 \alpha u_n^M)\delta t\\\nonumber
& +S(\delta t)P^M g(u_{n}^M)\delta _nW,
\end{align}
where $P^M$ is the spectral Galerkin projection operator. 
In a local interval $[t_n,t_{n+1}],$ the continuous extension of the nonlinearity-truncated fully discrete exponential Euler scheme can be defined by 
\begin{align*}
du_{E}^M(t)&=\bi \Delta u_{E}^Mdt+\mathbb I_{\{\|u_{E}^M([t]_{\delta t})\|_{\mathbb H^{\kappa}}\le R \}}P^M (f(u_E([t]_{\delta t})^M)-\alpha u_E([t]_{\delta t})^M ) dt\\
&+P^M g(u_E([t]_{\delta t})^M)dW(t).
\end{align*} 
\fi

\iffalse
In order to show the strong convergence of the splitting truncated scheme, we will focus on the following splitting scheme, which use the spectral Galerkin approximation in the first subsystem, 
\begin{align}\label{spl}
v_{n+1}&=\Phi_{D,M}^{\delta t}(u_n)=\exp(\bi \Delta P^M \delta t)P^M u_n,\\\nonumber 
w_{n+1}&=\Phi_{f}^{\delta t}(v_{n+1})=\exp(\bi \lambda \mathbb I^{\Omega}_{\{\|v_{n+1}\|_{\mathbb H^{\kappa}}\le R\}}|v_{n+1}|^{2\sigma}\delta t)v_{n+1},\\\nonumber 
u_{n+1}&=\Phi_{S,n}^{\delta t}(w_{n+1}),\nonumber
\end{align}
\fi
\subsection{Stability and Higher regularity estimate}
The first step to prove the strong convergence of \eqref{spl} lies on the stability and higher regularity estimate. Below, we show that there always exists a relationship between $M$ and $\delta t$ such that \eqref{spl} inherits the higher regularity estimate of the exact solution.

\begin{prop}\label{Sta1}
Let $p\in \mathbb N^+$ and $T>0$. Suppose that either Assumption \ref{add} or Assumption \ref{mul} holds with $\bs=1$.
The scheme \eqref{spl} is unconditionally stable, i.e.,
\begin{align*}
\E\Big[\sup_{n\le N}\|u_n^M\|^{2p}\Big]<\infty,
\end{align*}
and enjoys $\mathbb H^1$-regularity, i.e.,
\begin{align*}
\E\Big[\sup_{n\le N}\|u_n^M\|_{\mathbb H^1}^{2p}\Big]<\infty
\end{align*}
under the condition that 
\begin{align}\label{con-spa-tim}
\lambda_M^{2+2\kappa_1\sigma}\delta t \sim O(1), \; \text{for some} \; \kappa_1>\frac d2.
\end{align}
\end{prop}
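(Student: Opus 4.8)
The plan is to transfer every estimate onto the stochastic substep \eqref{sub-sto}, using that the dispersion substep $\Phi_{D,n}^M(\delta t,\cdot)=S(\delta t)$ is unitary on $\mathbb H$ and commutes with every power of the Laplacian, hence preserves $\|\cdot\|_{\mathbb H^{\gamma}}$ for all $\gamma$. The $L^{2p}$-stability then follows from It\^o's formula applied to $\|u_{S,n}^M(t)\|^{2p}$ on $[t_n,t_{n+1}]$: writing $\rho_n:=|u_n^M|^{2\sigma}$, the frozen drift contributes nothing to the mass because
\begin{align*}
\<u_{S,n}^M,P^M(\bi\lambda\rho_n u_{S,n}^M)\>=\mathrm{Re}\Big(\bi\lambda\int_{\mathcal O}\rho_n|u_{S,n}^M|^2\,dx\Big)=0 .
\end{align*}
In the multiplicative case the It\^o correction of $\bi u\,dW$ cancels $-\tfrac12\alpha u$, so mass is conserved and \eqref{mass-decay} gives $\|u_n^M\|\le\|\varPsi\|$ almost surely; in the additive case only the It\^o term $\|P^MQ^{\frac12}\|_{\mathcal L_2^0}^2\le\|Q^{\frac12}\|_{\mathcal L_2^0}^2$ and the martingale survive, and Burkholder's inequality together with a discrete Gronwall argument yield $\E[\sup_{n\le N}\|u_n^M\|^{2p}]\le C(T,Q,\varPsi,p)$, uniformly in $M$ and $\delta t$.

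For the $\mathbb H^1$-regularity I would control the mass together with the energy $H$, which under Assumption \ref{add} or \ref{mul} dominates $\|\cdot\|_{\mathbb H^1}^2$. The structural observation is that the scheme is the Lie--Trotter composition of the exact flows of the two pieces $H_1(w)=\tfrac12\|\nabla w\|^2$ and $H_2(w)=\tfrac1{2\sigma+2}\|w\|_{L^{2\sigma+2}}^{2\sigma+2}$ of $H=H_1+H_2$: the dispersion substep is the Hamiltonian flow of $H_1$ and conserves it, while the drift part of \eqref{sub-sto} is the nonlinear phase flow $w\mapsto e^{\bi\lambda\rho_n t}w$, which preserves $|u_{S,n}^M|$ pointwise (so that freezing $\rho_n$ is in fact exact) and conserves $H_2$. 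Consequently the per-step energy defect is the telescoping Poisson-bracket expression
\begin{align*}
H(u_{n+1}^M)-H(u_n^M)=\int_0^{\delta t}\big[\{H_1,H_2\}(\phi_2^s)-\{H_1,H_2\}(\phi_1^s)\big]\,ds+\mathcal M_n+\mathcal I_n,
\end{align*}
where $\phi_1^s$ and $\phi_2^s$ are the dispersion and phase sub-flows started, respectively, from $u_n^M$ and from $\Phi_{D,n}^M(\delta t,u_n^M)$, which agree up to $O(\delta t)$, so that the bracket difference produces a genuine $O(\delta t^2)$ local error, while $\mathcal M_n,\mathcal I_n$ are the martingale increment and It\^o correction generated by the noise.

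The crux, and the main obstacle, is to quantify this local energy defect on the Galerkin space. I would bound the derivative of $\{H_1,H_2\}$ along the sub-flows by H\"older's and the Gagliardo--Nirenberg inequalities, and convert the resulting high-order norms of the frozen coefficient $\rho_n=|u_n^M|^{2\sigma}$ into powers of $\lambda_M$ through the inverse inequality $\|v\|_{\mathbb H^{s}}\le\lambda_M^{(s-s')/2}\|v\|_{\mathbb H^{s'}}$ on the range of $P^M$, together with $\mathbb H^{\kappa_1}\hookrightarrow L^\infty$ for $\kappa_1>\tfrac d2$; the derivative loss in $D\{H_1,H_2\}$ combined with the two factors of $\|\rho_n\|_{L^\infty}\lesssim\lambda_M^{\kappa_1\sigma}\|u_n^M\|^{2\sigma}$ is what produces the exponent $2+2\kappa_1\sigma$, giving
\begin{align*}
|H(u_{n+1}^M)-H(u_n^M)|\le C\delta t^2\,\lambda_M^{2+2\kappa_1\sigma}\,\mathcal Q(\|u_n^M\|_{\mathbb H^1})+\mathcal M_n+\mathcal I_n .
\end{align*}
Summing over the $N=T/\delta t$ steps, the deterministic contribution is $\le CT(\lambda_M^{2+2\kappa_1\sigma}\delta t)\sup_n\mathcal Q=O(1)$ precisely by \eqref{con-spa-tim}; the It\^o corrections $\mathcal I_n$ are controlled using $Q^{\frac12}\in\mathcal L_2^1$ (whence the hypothesis $\bs=1$, with a partial cancellation against $\alpha$ in the multiplicative case), and the martingale sum $\sum_n\mathcal M_n$ by Burkholder's inequality. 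A discrete stochastic Gronwall argument, fed with the already established mass bound, then closes the recursion and gives $\E[\sup_{n\le N}\|u_n^M\|_{\mathbb H^1}^{2p}]<\infty$ uniformly in $M$ and $\delta t$. The delicate points are that $\rho_n$ is only as smooth as $u_n^M$ itself, so the estimate must be closed through the inverse inequality rather than the coefficient's regularity, and that the sharp balance $\lambda_M^{2+2\kappa_1\sigma}\delta t\sim O(1)$ is exactly what turns the $O(\delta t^2)$ local defect, accumulated over $O(\delta t^{-1})$ steps, into an $M$-independent bound.
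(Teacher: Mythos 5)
Your overall route coincides with the paper's: mass stability via the unitarity of the dispersion flow plus the decay of the mass along \eqref{sub-sto} (giving \eqref{mass-decay} in the multiplicative case, Burkholder plus Gronwall in the additive case), and $\mathbb H^1$-regularity by tracking the energy $H$ through the two substeps, converting the splitting defect into powers of $\lambda_M$ via the inverse inequality and $\mathbb H^{\kappa_1}\hookrightarrow L^\infty$, and absorbing the accumulation with \eqref{con-spa-tim} and a Gronwall argument. Your Poisson-bracket framing of the defect is a cleaner way to say what the paper computes by hand with It\^o's formula applied to $H(u_{S,n}^M(t))$, namely that the surviving deterministic terms are exactly the cross terms $\<\bi\Delta v_{D,n}^M,\bi f(v_{D,n}^M)\>+\<-\Delta u_{S,n}^M,\bi\lambda|u_{S,n}^M(t_n)|^{2\sigma}u_{S,n}^M\>$.

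There is, however, a genuine gap in the step where you assert that the drift of \eqref{sub-sto} is the pointwise phase rotation $w\mapsto e^{\bi\lambda\rho_n t}w$, that it preserves $|u_{S,n}^M|$ pointwise, that ``freezing $\rho_n$ is in fact exact,'' and hence that $H_2$ is conserved along this substep. None of this survives the Galerkin projection and the noise: \eqref{sub-sto} reads $du_{S,n}^M=P^M(\bi\lambda\rho_n u_{S,n}^M-\tfrac12\alpha u_{S,n}^M)dt+P^Mg(u_{S,n}^M)dW$, and $P^M(\bi\lambda\rho_n u)\neq\bi\lambda\rho_n u$ in general, the damping $-\tfrac12\alpha u$ alters $|u|$, and the stochastic term moves $|u_{S,n}^M(s)|$ away from $|u_{S,n}^M(t_n)|$ during the step. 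Consequently $H_2$ is \emph{not} conserved, and the It\^o expansion of $H$ produces additional terms that your decomposition never sees: the commutator of $f$ with $(I-P^M)$, giving contributions of the type $\|(I-P^M)u_{S,n}^M\alpha\|_{L^{2\sigma+2}}^{2\sigma+2}$ and $\sum_i\|(I-P^M)u_{S,n}^MQ^{\frac12}e_i\|_{L^{2\sigma+2}}^{2\sigma+2}$, and the ``unfreezing'' error $\<P^Mf(u_{S,n}^M(s)),(|u_{S,n}^M(s)|^{2\sigma}-|u_{S,n}^M(t_n)|^{2\sigma})u_{S,n}^M(s)\>$. These are precisely the terms the paper closes with \eqref{smo-spe}, the Gagliardo--Nirenberg inequality $\|w\|_{L^{2\sigma+2}}^{2\sigma+2}\le C\|\nabla w\|^{\sigma d}\|w\|^{2\sigma+2-\sigma d}$, and crucially the restriction $\sigma d\le 2$ from Assumptions \ref{add}--\ref{mul}; your argument never invokes this restriction, which is a sign that the estimate has not actually been closed. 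The fix is routine but not optional: expand $H(u_{S,n}^M(t))$ by It\^o's formula as in the paper, keep these extra terms, and bound them before applying Gronwall. Separately, and more mildly, the passage from $\sup_n\E[H^p(u_n^M)]<\infty$ to $\E[\sup_n\|u_n^M\|_{\mathbb H^1}^{2p}]<\infty$ requires a maximal inequality for the piecewise-defined auxiliary processes (the paper introduces $v_D^M,u_S^M$ in \eqref{glo-aux-def} and cites \cite{CHLZ19}); your single sentence on the discrete stochastic Gronwall argument glosses over this.
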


\begin{proof}
We first present the details of the proof for the multiplicative noise case.
When $g(\xi)=\bi \xi$, since \eqref{sub-det} is a Hamiltonian system, and the mass of \eqref{sub-sto}  decays,  \eqref{mass-decay} holds, which implies the unconditional stability. 
\iffalse
Since the stochastic subsystem of $u_S$ is a  by applying It\^o's formula, we can get that for $p\ge 1$, there exists $C:=C(p,Q)>0$ such that
\begin{align*}
\E \Big[\|u_{n+1}\|^{2p}\Big]\le \exp(C\delta t)\Big(\E[\|w_{n+1}\|^{2p}]+C\delta t\Big).
\end{align*}
Combining the above estimates and using Gronwall's inequality, we conclude that  
\eqref{spl} is unconditional stable in $\mathbb H$, i.e., there exists a constant $C>0,$
\fi
In order to show the $\mathbb H^1$-estimate, it suffices to prove the a priori estimate of the energy functional. This will be done  via a bootstrap argument as follows. 
For simplicity, we only present the details for $p=1$.
Applying the It\^o formula  to $H(u_{S,n}^M(t_{n+1}))$ and using the integration by parts, we get 
{\small
\begin{align*}
&H(u_{S,n}^M(t_{n+1}))\\
&=H(u_n^M)+\int_{t_n}^{t_{n+1}} \<\bi \Delta v_{D,n}^M(s),  \bi f(v_{D,n}^M(s))\>ds+\int_{t_n}^{t_{n+1}}\<-\Delta u_{S,n}^M(s), \bi \lambda |u_{S,n}^M(t_n)|^{2\sigma}u_{S,n}^M(s)\> ds\\
&+\int_{t_n}^{t_{n+1}}\< -\Delta u_{S,n}^M(s), -\frac 12 \alpha u_{S,n}^M(s)\>ds+\int_{t_n}^{t_{n+1}}\<-\Delta u_{S,n}^M(s), g(u^M_{S,n}(s))dW(s)\> \\
&+\int_{t_n}^{t_{n+1}}\frac 12 \sum_{i} \<\nabla g(u^M_{S,n}(s)) Q^{\frac 12}e_i, \nabla   g(u^M_{S,n}(s)) Q^{\frac 12}e_i\> ds\\
&+\int_{t_n}^{t_{n+1}}\<\lambda \bi f(u_{S,n}^M(s)), P^M\Big(\bi \lambda |u_{S,n}^M(t_n)|^{2\sigma}u_{S,n}^M(s)  -\frac 12 \alpha u_{S,n}^M(s)\Big) \>\\
&+\int_{t_n}^{t_{n+1}}\<\lambda \bi f(u_{S,n}^M(s)), P^M g(u^M_{S,n}(s))dW(s)\>\\
&+\int_{t_n}^{t_{n+1}}\frac 12 \sum_{i}\<\bi\lambda |u_{S,n}^M(s)|^{2\sigma} P^M g(u^M_{S,n}(s))Q^{\frac 12}e_i, P^M g(u^M_{S,n}(s))Q^{\frac 12}e_i\>ds\\
&+\int_{t_n}^{t_{n+1}}\frac 12 \sum_{i}\<\bi\lambda |u_{S,n}^M(s)|^{2\sigma-2} 2Re\Big(u_{S,n}^M(s) P^M g(u^M_{S,n}(s))Q^{\frac 12}e_i \Big)  u_{S,n}^M(s),\\
&\quad P^M g(u^M_{S,n}(s))Q^{\frac 12}e_i\>ds.
\end{align*}
}
Using the fact that 
\begin{align}\label{smo-semi}
\|(S(\delta t)-I)w\|\le C\delta t^{\min(\frac {\bs'} 2, 1)} \|w\|_{\mathbb H^{\bs'}}, \; \bs'\in \mathbb N^+,
\end{align}
and applying the Gagliardo--Nirenberg interpolation inequality, $$\|w\|_{L^{4\sigma+2}} \le C\|\nabla w\|^{\frac {\sigma d}{2\sigma+1}} \|w\|^{\frac {2\sigma+1-d}{2\sigma+1}}, d\le 2,$$ as well as the inverse inequality $\|w\|_{\mathbb H^{\bs_1}}\le C\lambda_{M}^{\frac {\bs_1} 2}\|w\|, $ $\forall \; w\in P^M \mathbb H, \;\bs_1\in \mathbb N^+$,
one can show that for $s\in [t_n,t_{n+1}],$
\begin{align}\label{con-est-vdn}
\|v_{D,n}^M(s)-u_n^M\|&\le  C\delta t \lambda_{M}\|u_n^M\| \\\label{con-est-usn}
\|u_{S,n}^M(s)- u_n^M\|&\le C\delta t^{\max(\frac \bs 2,1)}\lambda_{M}^{\frac \bs 2} \|u_n^M\|
+C\Big\|\int_{t_n}^s u_{S,n}^M(r)dW(r)\Big\|\\\nonumber
&+C\int_{t_n}^s \lambda_M^{\frac {\sigma d}2} (1+ \|u_{S,n}^M(r)\|^{2\sigma+1} +\|u_{S,n}^M(t_n)\|^{2\sigma+1})dr. 
\end{align}
According to \eqref{con-est-vdn} and \eqref{con-est-usn}, using the Gagliardo--Nirenberg interpolation inequality, Holder's and Young's inequalities, and the inverse inequality, we have that 
\begin{align*}
&\Big|\int_{t_n}^{t_{n+1}} \<\bi \Delta v_{D,n}^M(s),  \bi f(v_{D,n}^M(s))\> ds+\int_{t_n}^{t_{n+1}}\<-\Delta u_{S,n}^M(s), \bi \lambda |u_{S,n}^M(t_n)|^{2\sigma}u_{S,n}^M(s)\> ds\Big|\\
&\le C \int_{t_n}^{t_{n+1}} \lambda_M \Big(1+\|v_{D,n}^M(s)\|_{L^{4\sigma+2}}^{2\sigma+1}+\|u_{S,n}^M(t_n)\|_{L^{4\sigma+2}}^{2\sigma+1}\Big) \|v_{D,n}^M(s)-u_{S,n}^M(s)\|ds\\
&+C \int_{t_n}^{t_{n+1}} \lambda_M^{1+\sigma \kappa_1} \Big(1+\|v_{D,n}^M(s)\|^{2\sigma+1}+\|u_{S,n}^M(t_n)\|^{2\sigma+1}\Big) \|v_{D,n}^M(s)-u_{S,n}^M(s)\|ds\\
&\le C \lambda_M^{1+\kappa_1\sigma}  \int_{t_n}^{t_{n+1}}\Big(1+\|v_{D,n}^M(s)\|^{2\sigma+1}+\|u_{S,n}^M(t_n)\|^{2\sigma+1}\Big) \Big(\delta t \lambda_{M} \|u_n^M\|\\
&\quad + \Big\|\int_{t_n}^s u_{S,n}^M(r)dW(r)\Big\|+\lambda_M^{\frac {\sigma d}2}\int_{t_n}^s  (1+ \|u_{S,n}^M(r)\|^{2\sigma+1} +\|u_{S,n}^M(t_n)\|^{2\sigma+1})dr\Big).
\end{align*}
Taking expectation on the equation of $H(u_{S,n}^M(t_{n+1}))$, and using H\"older's and Young's inequalities, as well as the Gagliardo--Nirenberg interpolation inequality and 
the property of Galerkin projection operator,
\begin{align}\label{smo-spe}
\|(I-P^M) w\|\le C \lambda_M^{-\frac \bs 2}\|w\|_{\mathbb H^{\bs}},
\end{align}
we obtain that for $\lambda_M^{2+2\kappa_1 \sigma}\delta t\sim \mathcal O(1), \kappa_1>\frac d 2$, 
\begin{align*}
&\E \Big[H(u_{S,n}^M(t_{n+1}))\Big]\\
&\le \E \Big[H(u_n^M)\Big]+C\delta t +C\int_{t_n}^{t_{n+1}} H(u_{S,n}^M(s)) ds\\
&+C\int_{t_n}^{t_{n+1}} \|(I-P^M) u_{S,n}^M(s)\alpha \|_{L^{2\sigma+2}}^{2\sigma+2} ds
\\
&+C\int_{t_n}^{t_{n+1}}\sum_{i} \|(I-P^M) u_{S,n}^M(s)Q^{\frac 12}e_i \|_{L^{2\sigma+2}}^{2\sigma+2} ds\\
&+C\int_{t_n}^{t_{n+1}} \Big|\< P^M  f(u_{S,n}^M(s)),\Big(|u_{S,n}^M(s)|^{2\sigma}-|u_{S,n}^M(t_n)|^{2\sigma}\Big)u_{S,n}(s)\>\Big| ds\\
&\le  \E \Big[H(u_n^M)\Big]+C\delta t +C\int_{t_n}^{t_{n+1}} H(u_{S,n}^M(t_n)) ds\\
&+C\int_{t_n}^{t_{n+1}} \E \Big[ \|\nabla (I-P^M)u^M_{S,n}(s)\|^{\sigma d} \|(I-P^M)u^M_{S,n}(s)\|^{2\sigma+2-\sigma d} \Big]ds\\
&\le \E \Big[H(u_n^M)\Big]+C\delta t +C\int_{t_n}^{t_{n+1}}\E\Big[ H(u_{S,n}^M(t_n)) \Big]ds,
\end{align*}
where we use the condition  $\sigma d\le 2$ in  Assumption \eqref{mul} in the last step.
By Gronwall's inequality, we have $ \sup\limits_{n\le N}\E \Big[H(u_n^M)\Big] <\infty.$
Similarly, applying It\^o formula  to $H^{p}(u_{S,n}^M(t_{n+1}))$ and repeating the above procedures, we have that   
\begin{align}\label{gron-wall}
\sup_{n\le N}\sup_{t\in [t_n,t_{n+1}]}\E\Big[H^p(u_{S,n}^M(t))\Big]<\infty.
\end{align}
One can introduce the global auxiliary processes $v_{D}^M$ and $u_{S}^M$ defined  by 
\begin{align}\label{glo-aux-def}
&v_{D,M}(t)=v_{D,M,n}(t), \; \text{if} \;\;  t\in [t_n,t_{n+1}), \; \text{and}\;\; v_{D,M}(t_{n+1})=u_{n+1},\\\nonumber
&u_S(t)=u_{S,n}(t), \; \text{if} \;\;  t\in (t_n,t_{n+1}], \text{and} \;\; u_{S}(t_{n})= u_{S,n-1}(t_{n}).
\end{align}
It can be seen that $v_{D}^M$ and $u_{S}^M$ are $\mathcal F_t$-adapted and continuous piece-wisely. Following the arguments in the proof of \cite[Corollary 2.1 and Lemma 3.3]{CHLZ19}, it is not hard to show that  
\begin{align}\label{glo-aux-pri}
\E \Big[\sup_{s\in [0,T]}\|v_{D,M}(s)\|_{\mathbb H^{1}}^{2p}\Big]+\E \Big[\sup_{s\in [0,T]}\|u_{S}(s)\|_{\mathbb H^{1}}^{2p}\Big]<\infty.
\end{align}
As a consequence,  it follows that $\E \Big[\sup\limits_{n\le N} \|u_n\|_{\mathbb H^1}^{2p}]<\infty.$
For the additive noise case, the proof is similar and simpler since when applying the It\^o formula, the second derivative of $H(u_{S,n}^M(t))$ can be bounded by the energy itself. 
\end{proof}

\begin{rk}\label{rk-con}
It can be seen that 
the condition $\lambda_M^{2+2\kappa_1\sigma}\delta t\sim \mathcal O(1),$ 
is just for simplicity and could be improved. 
One may add a tamed term or truncated term into the stochastic nonlinear subsystem \eqref{sub-sto} to improve this restrictive condition. Our numerical analysis of the explicit numerical methods is also applicable to other spatial discretizations, like the finite difference methods and finite element methods, as long as the corresponding inverse holds. 
\end{rk}

Next, we show that the scheme \eqref{spl} could inherit the higher regularity estimate in $\mathbb H^{\bs}, \bs \ge 2$, of the exact solution, whose proof is given in the appendix.

\begin{prop}\label{cor-high}
Let $p\in \mathbb N^+$, $T>0$ and \eqref{con-spa-tim} hold. Suppose that either Assumption \ref{add} or Assumption \ref{mul} holds with $\bs\ge 2$. 
Then the numerical solution of  \eqref{spl} satisfies that for $d=1$,
\begin{align}\label{sup-pri-hs-1d}
\E \Big[ \sup_{n\le N} \|u_n^M\|_{\mathbb H^{\bs}}^{2p}\Big]<\infty,
\end{align}
and for $d=2$,
\begin{align}\label{sup-pri-hs-2d}
\E \Big[ \sup_{n\le N} \widetilde U(u_n^M)\Big]<\infty.
\end{align}
\end{prop}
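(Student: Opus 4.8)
The plan is to reproduce, at the level of the two splitting sub-flows, the Lyapunov-functional arguments used for the exact solution in Propositions \ref{1d-prop} and \ref{d=2-h2}, exploiting the fact that the deterministic sub-flow $\Phi_{D,n}^M$ is the restriction of the unitary group $S(t)=\exp(\bi\Delta t)$ to $P^M\mathbb H$ and hence commutes with $\Delta$. In particular $\|(-\Delta)^{\bs/2}v_{D,n}^M(t)\|=\|(-\Delta)^{\bs/2}u_n^M\|$ for $d=1$ and $\|\Delta v_{D,n}^M(t)\|=\|\Delta u_n^M\|$ for $d=2$, so the leading Sobolev seminorm is exactly preserved across each deterministic half-step and all genuine variation of the functional is produced by the stochastic sub-flow \eqref{sub-sto} and by the splitting mismatch between $v_{D,n}^M$ and $u_{S,n}^M$.

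For $d=1$ and $\bs\ge2$ I would argue by induction on $\bs$, the base case $\bs=1$ being Proposition \ref{Sta1}. Assuming the uniform bound of $\E[\sup_{n\le N}\|u_n^M\|_{\mathbb H^{\bs-1}}^{2p}]$, I introduce the numerical analogue of the auxiliary functional $V(w)=\|(-\Delta)^{\bs/2}w\|^2-\lambda\langle(-\Delta)^{\bs-1}w,|w|^{2\sigma}w\rangle$ of \cite{CHL16b} and apply It\^o's formula to $V^p(u_{S,n}^M(t))$ on $[t_n,t_{n+1}]$, tracking separately the change of $V$ along the deterministic half-step. The only dangerous contributions are the nonlinear part of the variation of $\|(-\Delta)^{\bs/2}\cdot\|^2$ along \eqref{sub-sto} and the $\bi\Delta$-part of the variation of the correction term $-\lambda\langle(-\Delta)^{\bs-1}\cdot,|\cdot|^{2\sigma}\cdot\rangle$ along the deterministic half-step; after integration by parts these would combine, if both sub-flows acted on the same state, into the purely top-order pairing $\langle(-\Delta)^{\bs/2}w,\bi|w|^{2\sigma}(-\Delta)^{\bs/2}w\rangle$, which vanishes by the reality of $|w|^{2\sigma}$. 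The actual residue is therefore governed by the increments \eqref{con-est-vdn}--\eqref{con-est-usn} together with lower-order Sobolev norms to which the induction hypothesis applies, the accompanying powers of $\lambda_M$ being absorbed by the CFL balance \eqref{con-spa-tim}. Bounding the remaining drift and diffusion terms by H\"older's, Young's, Gagliardo--Nirenberg and Burkholder's inequalities and the inverse inequality yields $\E[V^p(u_{S,n}^M(t_{n+1}))]\le(1+C\delta t)\E[V^p(u_n^M)]+C\delta t$; a discrete Gronwall argument then gives $\sup_{n\le N}\E[V^p(u_n^M)]<\infty$, and \eqref{sup-pri-hs-1d} follows after passing to the global auxiliary processes \eqref{glo-aux-def} and invoking the maximal estimate exactly as in the last step of Proposition \ref{Sta1}.

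For $d=2$ and $\sigma=1$ the same scheme applies to the log--log functional $\widetilde U(w)=\log(1+\log(1+\|\Delta w\|^2))$, and here the simplification is sharper: since $\|\Delta v_{D,n}^M(t)\|=\|\Delta u_n^M\|$, the deterministic half-step leaves $\widetilde U$ invariant, so it suffices to apply It\^o's formula to $\widetilde U(u_{S,n}^M(t))$ and reproduce the computation of Proposition \ref{d=2-h2}. The top-order term $\langle\Delta u_{S,n}^M,\bi\lambda|u_{S,n}^M(t_n)|^{2}\Delta u_{S,n}^M\rangle$ vanishes identically because the frozen coefficient is real, and the surviving terms carrying $\nabla(|u_{S,n}^M(t_n)|^2)$ and $\Delta(|u_{S,n}^M(t_n)|^2)$ are tamed by the weights $(1+\log(1+\|\Delta u\|^2))^{-1}(1+\|\Delta u\|^2)^{-1}$ and by the critical Sobolev inequality of Lemma \ref{cri-sob} together with the already established $\mathbb H^1$-bound \eqref{glo-aux-pri}; the projection error $(I-P^M)$ and the frozen-coefficient discrepancy are controlled through the inverse inequality, \eqref{smo-spe} and the condition \eqref{con-spa-tim}. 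Gronwall's inequality and the maximal estimate then give \eqref{sup-pri-hs-2d}.

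The main obstacle will be making the cross-half-step cancellation of the top-order nonlinear term quantitative in the $d=1$, $\bs\ge2$ case: one must verify that the residue left after matching the deterministic variation of $\langle(-\Delta)^{\bs-1}\cdot,|\cdot|^{2\sigma}\cdot\rangle$ against the stochastic variation of $\|(-\Delta)^{\bs/2}\cdot\|^2$ can be written through $\mathbb H^{\bs-1}$-norms alone (so that the induction hypothesis closes the estimate) and through increments whose $\lambda_M$-growth is exactly compensated by \eqref{con-spa-tim}. In $d=2$ the analogous difficulty is closing the logarithmic estimate, namely ensuring that every term carrying $\Delta u_{S,n}^M$ at top order is either annihilated by the reality of the frozen coefficient or dominated by the $(1+\|\Delta u\|^2)^{-1}$ weight, so that the right-hand side depends on $\widetilde U$ at most linearly and on $\|u\|_{\mathbb H^1}$ only polynomially, which is precisely what permits the Gronwall closure.
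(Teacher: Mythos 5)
Your proposal is correct and follows essentially the same route as the paper: for $d=1$ the paper likewise invokes the auxiliary functional $V(w)=\|(-\Delta)^{\bs/2}w\|^2-\lambda\langle(-\Delta)^{\bs-1}w,|w|^{2\sigma}w\rangle$ and the argument of \cite[Theorem 2.1]{CHL16b}, and for $d=2$ it likewise observes that the deterministic sub-flow preserves $\|\Delta\cdot\|$, applies It\^o's formula to $\widetilde U^p(u_S^M(t))$, and closes the estimate exactly as in Proposition \ref{d=2-h2} via Lemma \ref{cri-sob}, the $\mathbb H^1$-bound and Gronwall's inequality. The cancellation you identify (the top-order pairing vanishing because the frozen coefficient $|u_{S,n}^M(t_n)|^{2\sigma}$ is real) is precisely the mechanism the paper relies on.
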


One could also derive the regularity of the numerical scheme when $\sigma$  is not an integer and show that under the condition of Corollary \ref{cor-high} with $\sigma\ge \frac 12$,  \eqref{sup-pri-hs-2d} holds. As a consequence, the following tail estimates hold.

\begin{cor}\label{tail-num}
Let the condition of Proposition \ref{cor-high}. For a large enough $R_1\ge 1$ and $p_1\in \mathbb N^+$, it holds that for $d=1,$
\begin{align}\label{tail-est-un}
\mathbb P\Big( \sup\limits_{n\le N} \|u_n^M\|_{\mathbb H^{\bs}} \ge R_1 \Big)\le C_1 R_1^{-p_1},
\end{align}
and that for $d=2$,
\begin{align}\nonumber
&\mathbb P\Big( \sup\limits_{n\le N} \|u_n^M\|_{\mathbb H^{1}} \ge R_1 \Big)\le C_1 R_1^{-p_1},\\\label{tail-est-un-2d}
&\mathbb P\Big( \sup\limits_{n\le N} \|u_n^M\|_{L^{\infty}} \ge R_1 \Big)\le C_1 \log^{-p_1}(R_1).
\end{align}
\end{cor}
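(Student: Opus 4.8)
The plan is to obtain all three tail bounds directly from the moment estimates of Propositions \ref{Sta1} and \ref{cor-high} via Chebyshev's inequality, combined in the two-dimensional $L^\infty$ case with the critical Sobolev interpolation of Lemma \ref{cri-sob}. The argument parallels exactly the passages from Proposition \ref{1d-prop} to Corollary \ref{1d-cor} and from Proposition \ref{d=2-h2} to Corollary \ref{2d-cor}, the only change being that the continuous supremum over $[0,T]$ is replaced by the finite supremum over $n\le N$, which is already built into \eqref{sup-pri-hs-1d}, \eqref{sup-pri-hs-2d} and into the $\mathbb H^1$-bound of Proposition \ref{Sta1}.

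For the polynomial estimates I would argue as follows. When $d=1$, \eqref{sup-pri-hs-1d} supplies $\E[\sup_{n\le N}\|u_n^M\|_{\mathbb H^\bs}^{2p}]<\infty$ for every $p\in\mathbb N^+$, so Chebyshev's inequality gives
\[
\mathbb P\Big(\sup_{n\le N}\|u_n^M\|_{\mathbb H^\bs}\ge R_1\Big)\le R_1^{-2p}\,\E\Big[\sup_{n\le N}\|u_n^M\|_{\mathbb H^\bs}^{2p}\Big],
\]
and choosing $2p\ge p_1$ yields \eqref{tail-est-un}. For $d=2$, the hypotheses of Proposition \ref{cor-high} (which include $\bs\ge 2$) also guarantee the $\mathbb H^1$-moment bound of Proposition \ref{Sta1}, and the identical Chebyshev step produces the first estimate in \eqref{tail-est-un-2d}.

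The substantive point is the $L^\infty$-bound in two dimensions, where the critical nature of $\mathbb H^1\not\hookrightarrow L^\infty$ forces a logarithmic rate. Applying Lemma \ref{cri-sob} to each $u_n^M$, on the event $\{\sup_{n}\|u_n^M\|_{L^\infty}\ge R_1\}$ one has
\[
\sup_{n\le N}\Big[\log(1+\|u_n^M\|_{\mathbb H^1})+\log\big(1+\log^{\frac12}(1+\|u_n^M\|_{\mathbb H^2}^2)\big)\Big]\ge \log R_1-\log C .
\]
Splitting this sum at half its value, the first piece is controlled by Chebyshev applied to the polynomial $\mathbb H^1$-moment bound, which contributes a term of order $R_1^{-p}$ (faster than needed), while the second piece is controlled by Chebyshev applied to $\E[\sup_{n}\widetilde U^{p_1}(u_n^M)]<\infty$, the $p_1$-th-moment analogue of \eqref{sup-pri-hs-2d} obtained by the same computation as in Proposition \ref{cor-high}; since $\log(1+\log^{\frac12}(1+x^2))$ is dominated by a power of $\widetilde U(\cdot)$ evaluated at the corresponding argument, this piece decays like $[\log R_1]^{-p_1}$ and dominates. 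Summing the two contributions gives the claimed rate, and the estimate for $\sup_n\|u_n^M\|_{\mathbb H^2}$ follows from the same decomposition through $\widetilde U$.

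I expect the only real obstacle to be bookkeeping in this last step: correctly matching the target exponent $p_1$ in $[\log R_1]^{-p_1}$ against the power of $\widetilde U$ whose expectation is finite, and verifying that Proposition \ref{cor-high} indeed delivers $\widetilde U^{p_1}$-moment control rather than merely the first moment as literally displayed in \eqref{sup-pri-hs-2d}. Once this higher-moment version is in hand, every remaining manipulation is routine and structurally identical to the exact-solution estimate \eqref{2d-exp-lin}.
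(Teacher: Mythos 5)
Your proposal is correct and follows the same route the paper intends: Corollary \ref{tail-num} is stated as an immediate consequence of Propositions \ref{Sta1} and \ref{cor-high} via Chebyshev's inequality, with the $d=2$, $L^{\infty}$ case handled exactly as in the proof of Corollary \ref{2d-cor} through Lemma \ref{cri-sob}, the splitting of $\log(1+\|u_n^M\|_{\mathbb H^1})+\log(1+\log^{1/2}(1+\|u_n^M\|_{\mathbb H^2}^2))$, and the functional $\widetilde U$. You are also right about the one subtlety: the argument needs $\E[\sup_{n\le N}\widetilde U^{p_1}(u_n^M)]<\infty$ for arbitrary $p_1$, which the appendix proof of Proposition \ref{cor-high} does establish (it bounds $\E[\sup_{t\in[0,T]}\widetilde U^p(u_S^M(t))]$ for every $p\in\mathbb N^+$) even though \eqref{sup-pri-hs-2d} only displays the case $p=1$.
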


To end this section, we present the exponential integrability of the energy functional for 1D stochastic cubic Schr\"odinger equation whose proof is based on \cite[Lemma 3.1]{CHL16b}, \eqref{mass-decay} and the  Gagliardo--Nirenberg interpolation inequality $\|w\|_{L^{\infty}}\le C_0 \|\nabla w\|^{\frac 12} \|w\|^{\frac 12}+C_0' \|w\|.$ We omit the details of this proof for simplicity.

\begin{prop}\label{exp-1d}
Let $T>0$, $d=1$, \eqref{con-spa-tim} and Assumption \ref{mul} hold with $\bs=1$ and $\sigma=1$. Then for  a large enough $R_1\ge 1$ and $p_1\in \mathbb N^+$, the scheme   \eqref{spl} satisfies 
\begin{align*}
\mathbb P\Big(\sup\limits_{n\le N} \|u_n^M\|_{\mathbb H^{1}} \ge R_1\Big)\le C_1 \exp(-\eta_1 R_1^2),
\end{align*}
and 
\begin{align}\label{exp-l}
\mathbb P\Big(\sup\limits_{n\le N} \|u_n^M\|_{L^{\infty}} \ge R_1\Big)\le C_1 \exp(-\eta_1 R_1^4).
\end{align}
\end{prop}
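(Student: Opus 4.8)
The plan is to reproduce, at the level of the splitting scheme, the two–step argument used for the exact solution in Corollary \ref{1d-cor}: first obtain exponential integrability of the energy $H(u_n^M)=\frac12\|\nabla u_n^M\|^2+\frac14\|u_n^M\|_{L^4}^4$, and then convert it into the stated $\mathbb H^1$- and $L^\infty$-tail bounds. Throughout I would work with the global auxiliary process $u_S^M$ from \eqref{glo-aux-def}, which is $\mathcal F_t$-adapted, piecewise continuous, and agrees with $u_n^M$ at the grid points, so that $\sup_{n\le N}$ is dominated by $\sup_{t\in[0,T]}$.

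First I would establish the discrete exponential Lyapunov estimate: there exists $\alpha_1=\alpha_1(T,Q,\varPsi)$ such that
\begin{align*}
\E\Big[\sup_{t\in[0,T]}\exp\big(e^{-\alpha_1 t}H(u_S^M(t))\big)\Big]\le C(T,Q,\varPsi).
\end{align*}
This is the scheme analogue of the energy exponential integrability behind \eqref{exp-h1}. To prove it I would apply \cite[Lemma 3.1]{CHL16b} to $H(u_S^M(t))$, using the It\^o expansion of $H(u_{S,n}^M(t_{n+1}))$ already recorded in the proof of Proposition \ref{Sta1}. The key structural input is that, for $d=1$ and $\sigma=1$, the mass decay \eqref{mass-decay} together with the critical Gagliardo--Nirenberg inequality lets the nonlinear drift and the It\^o correction of $H$ be bounded by $\alpha_1 H$ plus a constant, which is exactly the coercivity condition required by the exponential integrability lemma. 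The splitting errors generated by $P^M$ and by freezing the coefficient $|u_{S,n}^M(t_n)|^{2\sigma}$ are controlled by the one-step estimates \eqref{con-est-vdn}--\eqref{con-est-usn} and absorbed using \eqref{con-spa-tim}, precisely as in Proposition \ref{Sta1}.

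Second, I would deduce the $\mathbb H^1$-tail. Since $\|\nabla u_n^M\|^2\le 2H(u_n^M)$ and $\|u_n^M\|\le\|u_0^M\|$ by \eqref{mass-decay}, one has $\|u_n^M\|_{\mathbb H^1}^2\le \|u_0^M\|^2+2H(u_n^M)$, so the exponential integrability of $H$ yields exponential integrability of $\sup_{n\le N}\|u_n^M\|_{\mathbb H^1}^2$ with rate $\eta_1\sim e^{-\alpha_1 T}$. The exponential Chebyshev inequality then gives $\mathbb P(\sup_{n\le N}\|u_n^M\|_{\mathbb H^1}\ge R_1)\le C_1\exp(-\eta_1 R_1^2)$. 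Finally, for \eqref{exp-l} I would invoke the stated inequality $\|w\|_{L^\infty}\le C_0\|\nabla w\|^{\frac12}\|w\|^{\frac12}+C_0'\|w\|$ together with the mass decay to see that, for $R_1$ large, the event $\{\sup_{n\le N}\|u_n^M\|_{L^\infty}\ge R_1\}$ forces $\sup_{n\le N}\|\nabla u_n^M\|\ge c R_1^2/\|u_0^M\|$; substituting into the $\mathbb H^1$-tail bound produces the quartic rate $\exp(-\eta_1 R_1^4)$, exactly mirroring the passage from \eqref{exp-h1} to \eqref{exp-lin}.

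I expect the main obstacle to be the first step, namely verifying the exponential (rather than merely polynomial-moment) Lyapunov inequality for $H(u_S^M(t))$ in the presence of the discretization errors. The delicate point is that the numerous quadratic-variation and projection-error terms appearing in the It\^o expansion of $H$ must be shown to respect the sign and coercivity structure demanded by \cite[Lemma 3.1]{CHL16b} uniformly in $M$ and $\delta t$ under \eqref{con-spa-tim}; this is where the restriction to the mass-subcritical 1D cubic regime $(d=1,\sigma=1)$, via the decaying mass and the critical Gagliardo--Nirenberg estimate, becomes essential.
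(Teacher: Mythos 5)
Your proposal is correct and follows exactly the route the paper indicates: the paper omits the detailed proof but states that it rests on \cite[Lemma 3.1]{CHL16b} applied to the energy, the mass decay \eqref{mass-decay}, and the Gagliardo--Nirenberg inequality $\|w\|_{L^{\infty}}\le C_0\|\nabla w\|^{\frac 12}\|w\|^{\frac 12}+C_0'\|w\|$, which are precisely the three ingredients you deploy in the same order (exponential integrability of $H$ along the splitting flow, exponential Chebyshev for the $\mathbb H^1$-tail, then Gagliardo--Nirenberg plus mass decay to upgrade to the quartic $L^{\infty}$-tail).
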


\section{Error decomposition and strong convergence}

In this section, we are in a position to present our methodology to analyze the convergence of explicit numerical methods and then to establish strong convergence analysis of \eqref{spl}. 
\iffalse
We would like to remark that for \eqref{non-trun-exEuler} with $\kappa=0$ and $R<\infty$, one may need to introduce an auxiliary process $z$ defined by
\begin{align*}
d  z(t)=\bi \Delta z(t)dt +f(u(t))\mathbb I_{\{\|u_{[t]}^M\|\le R\}}dt +g(u(t))\circ dW(t).
\end{align*} 
Using the regularity estimate in Section \ref{high-reg-tail}, it is not hard to show that $\E \Big[\|z(t)-u(t)\|^{2p}\Big]\le C R^{-p_0}$ for any large $p_0\in \mathbb N^+$. Consequently, the strong error estimate of \eqref{non-trun-exEuler} only depends on $z(t_n)-u_n$, which could be analyzed by the similar steps in the following subsections.
\fi

\subsection{Error decomposition}\label{sub-err-dec}
By iteratively rewriting \eqref{spl} into its integral form, we have that 
\begin{align*}
u_{n+1}^M
&=S(t_{n+1})P^M u_0^M+\int_{0}^{t_{n+1}} S_{N,n+1}^M(s) \bi \lambda |u_S^M(t_n)|^{2\sigma} u_S^M(s) ds\\
&-\int_{0}^{t_{n+1}} S^M_{N,n+1}(s) \frac 12 \alpha u^M_{S}(s) ds
+\int_{0}^{t_{n+1}} S^M_{N,n+1}(s) g(u_{S}^M(s)) dW(s),
\end{align*}
where $S_{N,n+1}^M(s)$ is defined by 
\begin{align*}
S^M_{N,n+1}(s)= \sum_{j=0}^{n-1} I_{[t_j,t_{j+1}]}(s) S(t_n-t_j) P^M+I_{[t_n,t_{n+1}]}(s) P^M.
\end{align*}
Thanks to $\mathbb H^{\bs_1}$-regularity of $u(t)$ in section 2, it follows that 
\begin{align}\label{strong-spe}
\|u(t)-P^M u(t)\|^{2p} \le C \lambda_M^{-\frac {\bs_1}2}\|u(t)\|_{\mathbb H^{\bs_1}}.
\end{align}
Therefore, in order to show the strong convergence of \eqref{spl}, it suffices to estimate the error $P^M u(t_n)-u_m^N$. To simplify the presentation, we only present the detailed steps of the strong convergence analysis of \eqref{spl} in the multiplicative noise case. 
In the following, we denote $[s]$ the integer part of $\frac {s}{\delta t}, s\in [0,T]$ and $[s]_{\delta t}=[s]\delta t.$

\subsubsection{Error decomposition in additive noise case}

The strong convergence analysis of \eqref{spl} in the additive noise case could be established by directly investigating the mild formulas of $P^Mu(t_n)$ and $u_n^M$ which is simpler than the multiplicative noise case. 
Define a sequence of subsets of $\Omega,$
\begin{align}\label{trun-seq}
&\Omega_{R_1}^{n+1}:=\Big\{\sup\limits_{s\in [0,(n+1)\delta t]} \Big(\|u(s)\|_{\mathbb H^{1}}+ \|u_{[s]}^M\|_{\mathbb H^{1}}\Big) \le R_1 \Big\}, \; \text{if } d=1, \; \text {and }\\\nonumber
&\Omega_{R_1}^{n+1}:=\Big\{\sup\limits_{s\in [0,(n+1)\delta t]} \Big(
(1+\|u(s)\|_{\mathbb H^1})(1+\log(1+\|u(s)\|_{\mathbb H^2}^2))
\\\nonumber
&\qquad\qquad\quad+(1+\|u_{[s]}^M\|_{\mathbb H^1})(1+\log(1+\|u_{[s]}^M\|_{\mathbb H^2}^2))
\Big) \le R_1 \Big\}, \; \text{if } d=2.
\end{align}
It can be seen that $\Omega_{R_1}^{n}$ is increasing w.r.t. $R_1$ and decreasing w.r.t. $n.$
We omit the details in the additive noise case and only present its error representation formula  here.   

\begin{prop}\label{prop-aux}
Let $T>0, p\in \mathbb N^+$ and Assumption \ref{add} hold with $\bs>\frac d2$. 
Then it holds that 
\begin{align}\label{err-for-add}
&\|u_{n+1}^M-P^M u(t_{n+1})\|_{L^p(\Omega_{R_1}^{n+1} ;\mathbb H)}\\\nonumber
& 
\le C(\delta t^{\frac 12} +\lambda_M^{-\frac {\bs} 2}) \sup_{s\in [0,t_{n+1}]}\Big(1+\|u(s)\|^{2\sigma+1}_{L^{(2\sigma+1)p}(\Omega_{R_1}^{n+1};\mathbb H^{\bs})}+\|u_S^M(s)\|^{2\sigma+1}_{L^{(2\sigma+1)p}(\Omega_{R_1}^{n+1};\mathbb H^{\bs})} \Big)\\\nonumber
&+C\int_{0}^{t_{n+1}} R_1^{2\sigma'}\|u_{[r]}-P^M u([r]_{\delta t})\|_{L^p(\Omega_{R_1}^{[r]};\mathbb H)}ds,
\end{align}
where $\sigma'=\frac \sigma 2$ if $d=1$, and $\sigma'=\sigma$ if $d=2.$

\end{prop}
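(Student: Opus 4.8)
The plan is to subtract the spectral-Galerkin projection of the mild formula of $u$ from the integral form of $u_{n+1}^M$ recorded above, and to sort the resulting difference into discretization errors that carry the clean prefactor $(\delta t^{\frac12}+\lambda_M^{-\frac\bs2})$ and one genuinely propagated error that will feed a Gronwall integral. Since in the additive case $g\equiv1$ and $\alpha=0$, the projected exact solution reads
\[
P^Mu(t_{n+1})=S(t_{n+1})P^M\varPsi+\int_0^{t_{n+1}}S(t_{n+1}-s)P^Mf(u(s))\,ds+\int_0^{t_{n+1}}S(t_{n+1}-s)P^M\,dW(s),
\]
and because $u_0^M=P^M\varPsi$ the initial terms cancel against those of $u_{n+1}^M$, leaving a drift difference $D$ and a stochastic difference $\int_0^{t_{n+1}}(S_{N,n+1}^M(s)-S(t_{n+1}-s)P^M)\,dW(s)$ to estimate in $L^p(\Omega_{R_1}^{n+1};\mathbb H)$. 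The stochastic term I would dispose of first: rewriting $\Omega_{R_1}^{n+1}=\{\tau_{R_1}>t_{n+1}\}$ for the first-exit time $\tau_{R_1}$ of the functional defining $\Omega_{R_1}^{n+1}$, the stopped martingale is controlled by Burkholder's inequality, and since the integrand is deterministic, \eqref{smo-semi}, \eqref{smo-spe} and $Q^{\frac12}\in\mathcal L_2^{\bs}$ bound it by $C(\delta t^{\frac12}+\lambda_M^{-\frac\bs2})\|Q^{\frac12}\|_{\mathcal L_2^{\bs}}$, absorbed into the ``$1$'' inside the supremum factor of \eqref{err-for-add}.

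The heart of the argument is the drift difference, which I would split as $D=D_1+D_2+D_3$: the propagator error $D_1$ replacing $S_{N,n+1}^M(s)$ by $S(t_{n+1}-s)P^M$ on the frozen nonlinearity; the frozen-coefficient error $D_2$ comparing $|u_S^M([s]_{\delta t})|^{2\sigma}$ with $|u_S^M(s)|^{2\sigma}$; and the propagated error $D_3=\int S(t_{n+1}-s)P^M\bi\lambda(|u_S^M(s)|^{2\sigma}u_S^M(s)-|u(s)|^{2\sigma}u(s))\,ds$. For $D_1$ I use \eqref{smo-semi}--\eqref{smo-spe} together with the algebra property of $\mathbb H^{\bs}$ (valid since $\bs>\frac d2$), so that $\big\||w|^{2\sigma}w\big\|_{\mathbb H^{\bs}}\le C\|w\|_{\mathbb H^{\bs}}^{2\sigma+1}$ and hence $\|D_1\|\le C(\delta t^{\frac12}+\lambda_M^{-\frac\bs2})\sup_s\|u_S^M(s)\|_{\mathbb H^{\bs}}^{2\sigma+1}$; $D_2$ is treated by the within-step temporal continuity of $u_S^M$ from \eqref{con-est-usn} (under the coupling \eqref{con-spa-tim}) against a Sobolev $L^{\infty}$ bound, yielding a bound of the same shape. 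Thus $D_1$ and $D_2$ both feed the first term of \eqref{err-for-add}.

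The decisive term is $D_3$. Using the local Lipschitz estimate $\big\||a|^{2\sigma}a-|b|^{2\sigma}b\big\|\le C(\|a\|_{L^{\infty}}^{2\sigma}+\|b\|_{L^{\infty}}^{2\sigma})\|a-b\|$ and the unitarity of $S$, the point is to realize the $L^{\infty}$ factor as $R_1^{2\sigma'}$ rather than $R_1^{2\sigma}$: in $d=1$ the Gagliardo--Nirenberg inequality $\|w\|_{L^{\infty}}\le C\|\nabla w\|^{\frac12}\|w\|^{\frac12}$ places the gradient on the $R_1$-bounded factor while the $L^2$ part is absorbed into the constant via the mass bound \eqref{mass-decay} (or, in the additive setting, the finite moments of Proposition \ref{1d-prop}), giving $\sigma'=\frac\sigma2$; in $d=2$ Lemma \ref{cri-sob} and the definition of $\Omega_{R_1}^{n+1}$ give $\|w\|_{L^{\infty}}^{2}\le CR_1^{2}$, i.e. $\sigma'=\sigma=1$. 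Finally I split $\|u_S^M(s)-u(s)\|\le\|u_S^M(s)-u_{[s]}^M\|+\|u_{[s]}^M-P^Mu([s]_{\delta t})\|+\|P^Mu([s]_{\delta t})-u(s)\|$; the first and third summands are within-step increments and projection errors contributing once more to the $(\delta t^{\frac12}+\lambda_M^{-\frac\bs2})$ prefactor, whereas the middle summand is exactly $\|u_{[s]}^M-P^Mu([s]_{\delta t})\|$, and by the monotonicity $\Omega_{R_1}^{n+1}\subseteq\Omega_{R_1}^{[s]}$ it may be measured in $L^p(\Omega_{R_1}^{[s]};\mathbb H)$, producing the Gronwall integral in \eqref{err-for-add}.

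I expect the main obstacle to be precisely this bookkeeping: keeping the $R_1$-dependence confined to the propagated term $D_3$ while forcing every discretization error to carry only $\mathbb H^{\bs}$-norm prefactors, so that the first term of \eqref{err-for-add} remains $R_1$-free. Getting the sharp exponent $\sigma'$ through the mass control (respectively the logarithmic Sobolev inequality of Lemma \ref{cri-sob} in $d=2$), and handling both the event-restricted stochastic integral and the within-step increments through the stopping time $\tau_{R_1}$ so that the monotonicity of $\{\Omega_{R_1}^{n}\}$ lets the recursion close, is the delicate quantitative point that renders the subsequent truncated stochastic Gronwall lemma applicable.
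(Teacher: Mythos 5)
The paper does not actually prove Proposition \ref{prop-aux}: it states only that the additive case ``could be established by directly investigating the mild formulas of $P^Mu(t_n)$ and $u_n^M$'' and omits the details. Your proposal is exactly that intended argument, and its architecture mirrors the multiplicative-case proof of Proposition \ref{prop-err-dec}: cancel the initial data, control the piecewise-constant propagator error and the spectral projection error via \eqref{smo-semi}--\eqref{smo-spe} and the algebra property of $\mathbb H^{\bs}$ (giving the $R_1$-free $(\delta t^{1/2}+\lambda_M^{-\bs/2})$ prefactor with the $L^{(2\sigma+1)p}(\mathbb H^{\bs})$ moments), handle the deterministic-integrand stochastic difference by Burkholder, and isolate the single propagated term whose local Lipschitz constant is realized as $R_1^{2\sigma'}$ on the event so that the monotonicity $\Omega_{R_1}^{n+1}\subseteq\Omega_{R_1}^{[r]}$ closes the Gronwall recursion. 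That bookkeeping, including the absorption of the within-step increments of $u_S^M$ and of $\|(I-P^M)u\|$ into the first term, is sound.

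The one step that does not hold up as written is your derivation of $\sigma'=\tfrac{\sigma}{2}$ for $d=1$. The proposition is stated under Assumption \ref{add}, i.e.\ the additive case, where the mass decay \eqref{mass-decay} is unavailable; your fallback of ``absorbing the $L^2$ factor into the constant via the finite moments of Proposition \ref{1d-prop}'' is not a valid absorption, since $\|u(s)\|^{\sigma}$ is a random variable with finite moments, not a deterministic constant, and any H\"older step in $\omega$ would push the companion error factor $\|u_{[r]}^M-P^Mu([r]_{\delta t})\|$ into a strictly larger $L^{p'}(\Omega_{R_1}^{[r]})$ norm, which breaks the recursion required by Lemma \ref{rough-gronwall}. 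On the event as defined in \eqref{trun-seq} the honest bound is $\|w\|\le\|w\|_{\mathbb H^1}\le R_1$, which yields $\sigma'=\sigma$ rather than $\sigma/2$; to obtain $\sigma/2$ one would have to enlarge the truncation event to also control $\sup_s\|u(s)\|$ by a fixed constant (feasible, given the Gaussian mass tails \eqref{exp-l2}, but it changes the event and its tail estimate). Since the additive branch of Theorem \ref{tm-1d} produces only a logarithmic rate through Proposition \ref{poly-gron}, the value of $\sigma'$ is immaterial downstream, so this is a localized defect in the exponent rather than in the method.
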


\subsubsection{Error decomposition in multiplicative noise case}

To eliminate the interaction effects of the unbounded operator and the multiplicative noise, we introduce the exponential transform 
$v_n(t)=S(-(t-t_n)) u(t), t\in [t_n,t_{n+1}]$ which satisfies the following SODE,
\begin{align*}
dv_n(t)&=S(-(t-t_n))  f(S(t-t_n)v_n(t))dt -\frac 12S(-(t-t_n)) \alpha (S(t-t_n) v_n(t))dt\\
&\quad +S(-(t-t_n)) g(S(t-t_n)v_n(t))dW(t),
\end{align*}
Via the above equation and \eqref{sub-sto}, we can transform the error estimate of SPDEs into that of infinite-dimensional SDEs.
It can be also seen that
\begin{align*}
dP^M v_n(t)&=S(-(t-t_n)) P^M f(S(t-t_n)P^Mv_n(t)+(I-P^M)u(t))dt \\
&-\frac 12S(-(t-t_n)) P^M \alpha (S(t-t_n) v_n(t)+(I-P^M)u(t))dt\\
&\quad +S(-(t-t_n))P^M g(S(t-t_n)v_n(t)+(I-P^M)u(t))dW(t).
\end{align*}
To propose the error decomposition, we also need the sequence of subsets $\Omega_{R_1}^{n}$ defined in \eqref{trun-seq} which is increasing w.r.t. $R_1$ and decreasing w.r.t. $n.$

\begin{prop}\label{prop-err-dec}
Let $T>0, p\in \mathbb N^+$ and Assumption \ref{add} hold with $\bs>\frac d2$.
 It holds that for $\bs_1\in \mathbb N^+$,
\begin{align}\label{err-for-mul}
&\|P^M u(t_{n+1})-u_{n+1}^M\|_{L^{2p}(\Omega^{n+1}_{R_1};\mathbb H)}^2\\\nonumber
&\le 
C(1+R_1^{2\sigma'})\delta t+C\int_{0}^{t_{n+1}}(1+R_1^{2\sigma'}) \|P^Mu([r]_{\delta t})-u_{[r]}^M\|^2_{L^{2p}(\Omega^{[r]}_{R_1};\mathbb H)} dr \\\nonumber
&+C(1+R_1^{2\sigma'})\lambda_M^{-\bs}\int_{0}^{t_{n+1}}\Big(1+\|u(r)\|_{L^{2p}(\Omega^{n+1}_{R_1};\mathbb H^{\bs})}^2\Big)ds\\\nonumber
&+C(\lambda_M^{-\bs_1}+\delta t^{\min(2,\bs_1)})
\sup_{r\in [0,t_{n+1}]}   \Big(\|u(r)\|^2_{L^{2p}(\Omega;\mathbb H^{\bs_1})}+\|u_{S}^M(r)\|_{L^{2p}(\Omega;\mathbb H^{\bs_1})}^2\Big)\\\nonumber
&+ C (\lambda_M^{-\frac {\bs_1}2}+\delta t^{\min(1,\frac {\bs_1}2)})\sup_{r\in [0,t_{n+1}]}   \Big(\|u(r)\|_{L^{2p}(\Omega;\mathbb H^{\bs_1})}+\|u_{S}^M(r)\|_{L^{2p}(\Omega;\mathbb H^{\bs_1})}\Big) \\\nonumber
&\times \Big(\int_0^{t_{n+1}}  \|P^Mu([r]_{\delta t})-u_{[r]}^M\|^2_{L^{2p}((\Omega^{[r]}_{R_1})^c;\mathbb H)} dr\Big)^{\frac 12}, \nonumber
\end{align}
where $\sigma'=\frac \sigma 2$ if $d=1$, and $\sigma'=\sigma$ if $d=2.$
\end{prop}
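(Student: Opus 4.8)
The plan is to localize on the good event $\Omega_{R_1}^{n+1}$ and to convert the SPDE error into an infinite-dimensional SDE error by means of the exponential transform $v_n(t)=S(-(t-t_n))u(t)$ introduced just before the statement, so that over each subinterval $[t_n,t_{n+1}]$ the transformed exact solution $P^M v_n$ and the numerical solution solve Itô equations whose leading-order noise terms coincide up to projection and coefficient-freezing discrepancies. First I would write both $P^M u(t_{n+1})$ and $u_{n+1}^M$ through the mild/integral representations recorded at the start of Subsection \ref{sub-err-dec}, so that their difference decomposes into (i) a nonlinear drift difference, (ii) a damping difference, (iii) a diffusion difference, together with the three structural discrepancies between the splitting operator $S_{N,n+1}^M(\cdot)$ and the true propagator $S(t_{n+1}-\cdot)P^M$, between the frozen coefficient $|u_S^M(t_n)|^{2\sigma}$ and $|u_S^M(s)|^{2\sigma}$, and between the identity and the Galerkin projection $P^M$.

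Next I would apply Itô's formula to $\mathbb I_{\Omega_{R_1}^{n+1}}\|P^M u(t_{n+1})-u_{n+1}^M\|^2$, take $2p$-th moments, and bound the resulting terms individually. The engine is the local Lipschitz estimate for $\xi\mapsto|\xi|^{2\sigma}\xi$: on $\Omega_{R_1}^{n+1}$ the quantity $\||u|^{2\sigma}\|_{L^\infty}$ is controlled by $R_1^{2\sigma'}$, where for $d=1$ the Gagliardo--Nirenberg inequality $\|w\|_{L^\infty}\le C\|\nabla w\|^{1/2}\|w\|^{1/2}$ combined with the mass conservation \eqref{mass-decay} (which keeps the $L^2$-norm a constant, so only $\|\nabla u\|^{\sigma}\lesssim R_1^{\sigma}$ contributes) yields $\sigma'=\sigma/2$, while for $d=2$ Lemma \ref{cri-sob} converts the $\log$-$\mathbb H^2$ bound in the definition of $\Omega_{R_1}^{n+1}$ into $\|u\|_{L^\infty}^{2\sigma}\lesssim R_1^{2\sigma}$, giving $\sigma'=\sigma$. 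This produces the weighted error integral $\int_0^{t_{n+1}}(1+R_1^{2\sigma'})\|P^M u([r]_{\delta t})-u_{[r]}^M\|^2\,dr$. The temporal freezing error is absorbed into $C(1+R_1^{2\sigma'})\delta t$ via the Hölder-in-time estimate \eqref{con-est-usn} and the a priori bound \eqref{gron-wall}; the splitting and projection discrepancies are treated with the smoothing bounds \eqref{smo-semi} and \eqref{smo-spe}, producing the factors $\lambda_M^{-\bs}$ and $\lambda_M^{-\bs_1}+\delta t^{\min(2,\bs_1)}$ multiplied by the $\mathbb H^{\bs}$- and $\mathbb H^{\bs_1}$-regularity norms furnished by Propositions \ref{1d-prop}, \ref{d=2-h2} and \ref{Sta1}; and the diffusion difference is handled by Burkholder's inequality followed by Young's inequality.

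The delicate point is that the localization cannot be propagated backward without loss. Since $\{\Omega_{R_1}^{n}\}$ is only decreasing in $n$, those contributions whose coefficient factor is \emph{not} bounded by $R_1$ (the splitting and projection pieces, which require the full $\mathbb H^{\bs_1}$-regularity on all of $\Omega$) must be paired with the earlier error after writing $\mathbb I_\Omega=\mathbb I_{\Omega_{R_1}^{[r]}}+\mathbb I_{(\Omega_{R_1}^{[r]})^c}$. The good-set part feeds the main error integral, while the complement part survives as the final cross term in \eqref{err-for-mul}; crucially, estimating this pairing by Cauchy--Schwarz (rather than squaring by Young) keeps only \emph{one} power of the smoothing factor, which is why the cross term carries $\lambda_M^{-\bs_1/2}+\delta t^{\min(1,\bs_1/2)}$ against the regularity norm and the $L^{2p}((\Omega_{R_1}^{[r]})^c;\mathbb H)$-norm of the error. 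This is exactly the summand that later interfaces with the truncated stochastic Gronwall lemma, so reproducing its precise form is what makes the subsequent convergence argument close.

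I expect the hardest step to be precisely this separation of good-set and complement contributions while keeping the $R_1$-dependence explicit and the dependence on the error linear: the bookkeeping of which norms may be localized, and thus controlled by $R_1$, versus which must be estimated by unconditional moments at the cost of a power of $\lambda_M$ or $\delta t$, is where all the care lies, since mislocalizing a single term would either destroy the $R_1^{2\sigma'}$ structure or break the Gronwall-type recursion.
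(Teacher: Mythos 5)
Your proposal follows essentially the same route as the paper: the exponential transform $v_n(t)=S(-(t-t_n))u(t)$, the It\^o expansion of the squared error with the same decomposition into nonlinearity/coefficient-freezing/projection/diffusion discrepancies, the $L^{\infty}$-control on $\Omega_{R_1}^{n+1}$ yielding the factor $R_1^{2\sigma'}$ with the correct identification of $\sigma'$ in $d=1$ (Gagliardo--Nirenberg plus mass conservation) and $d=2$ (Lemma \ref{cri-sob}), and the Cauchy--Schwarz pairing over $(\Omega_{R_1}^{[r]})^c$ that produces the final cross term with a single power of the smoothing factor. The only caveat is presentational: the indicator $\mathbb I_{\Omega_{R_1}^{n+1}}$ is $\mathcal F_{t_{n+1}}$-measurable, so the localization must be applied after the It\^o expansion (as the paper does, taking $L^{p}(\Omega_{R_1}^{n+1})$-norms of the already-expanded terms) rather than inserted before It\^o's formula.
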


\begin{proof}
For convenience, let us define a global auxiliary process $v(t)$ by 
$v(t)=v_n(t)$ for $t\in [t_n,t_{n+1})$ and $v(t_{n+1})=S(-\delta t) u(t_{n+1})$. 
For $t\in [t_n,t_{n+1}),$ applying the It\^o formula to $\|P^M v(t)-S(-\delta t) u_S^M (t)\|^2$  and using the fact $\<w,\bi w\>=0$, $w\in \mathbb H$, we obtain 
{\small
\begin{align*}\nonumber
&\|P^M v(t)-P^M S(-\delta t)u_S^M(t)\|^2\\
&=\|P^M u(t_n)-\lim_{t\to t_n}  S(-\delta t) u_S^M(t) \|^2\\\nonumber
&+2\int_{t_n}^{t}\<P^M v(r)-S(-\delta t) u_S^M(r), S(t_n-r) f(S(r-t_n) v(r))-S(-\delta t)\bi \lambda |u_S^M(t_n)|^{2\sigma} u_S^M(r)\>dr\\\nonumber
&+2\int_{t_n}^{t}\<P^M v(r)-S(-\delta t)u_S^M (r),(S(t_n-r) g(S(r-t_n) v(r))-S(-\delta t) g(u_S^M(r)))dW(r) \>\\\nonumber
&=\|P^M u(t_n)-u_n^M\|^2\\\nonumber
&+2\int_{t_n}^{t}\<P^M v(r)-S(-\delta t) u_S^M(r), S(t_n-r) f(S(r-t_n) v(r))-S(-\delta t) f(S(\delta t)v(r))\>dr\\
&+2\int_{t_n}^{t}\<P^M v(r)-S(-\delta t) u_S^M(r), S(-\delta t) f(S(\delta t)v(r))-S(-\delta t) \bi \lambda |u_S^M(t_n)|^{2\sigma} u_S^M(r) \>dr\\
&+2\int_{t_n}^{t}\<P^M v(r)-S(-\delta t) u_S^M(r),(S(t_n-r) g(S(r-t_n) v(r))-S(-\delta t) g(S(\delta t)v(r)))dW(r) \>\\\nonumber
&+2\int_{t_n}^{t}\<P^M v(r)-S(-\delta t) u_S^M(r),(S(-\delta t) g(S(\delta t)(I-P^M)v(r)))dW(r) \>\\
&=:\|P^M u(t_n)-u_n^M\|^2+Err_1^n(t)+Err_2^n(t)+Err_3^n(t)+Err_4^n(t).
\end{align*}
}
Notice that for $t\in [t_n,t_{n+1}]$,
\begin{align}\label{property-v}
&\|v(t)-u(t)\|= \|(S(t-t_n)-I)u(t)\|\le C\delta t^{\min(\frac \bs 2,1)}\|u(t)\|_{\mathbb H^{\bs}},\\\nonumber
&v(t)-v(t_n)=\int_{t_n}^{{t}}\Big(S(t_n-r) f(S(r-t_n) v(r))-\frac 12S(t_n-r)  \alpha S(r-t_n) v(r)\Big)dr\\\nonumber
&\qquad\qquad\qquad +\int_{t_n}^t S(t_n-r)  g(S(r-t_n) v(r)) dW(r),
\end{align}
and 
\begin{align}\label{property-us}
u_S^M(t)-u_S^M(t_n)&=\int_{t_n}^{{t}} P^M \Big(\bi \lambda |u_S^M(t_n)|^{2\sigma}u_S^M(r)-\frac 12 \alpha u_S^M(r)\Big)dr\\\nonumber
&+\int_{t_n}^t P^M g(u_S^M(r)) dW(r).
\end{align}
According to \eqref{property-v} and the fact that $\mathbb H^{\bs}, \bs>\frac d2$ forms an algebra, using the unitarity of $S(\cdot)$, \eqref{smo-semi}, and the Sobolev embedding theorem, we have that 
\begin{align*}
Err_1^n(t) \le C \int_{t_n}^t \|P^M v(r)-S(-\delta t) u_S^M(r)\| \delta t^{\min(1,\frac \bs 2)}(1+\|u(r)\|_{\mathbb H^{\bs}}^{2\sigma+1})dr.
\end{align*}
Similarly, using \eqref{property-us}, \eqref{smo-spe}, Holder's and Young's inequalities, as well as Propositions \ref{1d-prop},  \ref{d=2-h2} and  \ref{Sta1}, it follows that 
\begin{align*}
&Err_2^n(t)\\
 &\le C \int_{t_n}^t \|P^M v(r)-S(-\delta t) u_S^M(r)\|^2 \big(1+\|P^M v(r)\|_{L^{\infty}}^{2\sigma}+\|u_S^M(t_n)\|_{L^{\infty}}^{2\sigma}\big)dr\\
&+ C \int_{t_n}^t \|P^M v(r)-S(-\delta t) u_S^M(r)\| \Big(\|u_S^M(t_n)\|_{L^{\infty}}^{2\sigma} +\|v(r)\|_{L^{\infty}}^{2\sigma} \Big) \|(I-P^M) v(r)\|dr\\
&+ C \int_{t_n}^t \|P^M v(r)-S(-\delta t) u_S^M(r)\| \Big(\|u_S^M(t_n)\|_{L^{\infty}}^{2\sigma} +\|v(r)\|_{L^{\infty}}^{2\sigma} \Big) \|u_S^M(r)-u_S^M(t_n)\|dr\\
&\le C\big(1+R_1^{2\sigma’} \big) \Big[ \int_{t_n}^t \|P^M v(t_n)-S(-\delta t) u_S^M(t_n)\|^2 dr + \int_{t_n}^t\Big(\|v(r)\|_{\mathbb H^{\bs}}^2 \lambda_M^{-\bs} \\
&+\delta t\|u_S^M(t_n)\|_{\mathbb H^{1}}^2+\int_{t_n}^r\|u_S^M(s)\|^{4\sigma+2}_{\mathbb H^1} ds +\|\int_{t_n}^r g(u_S^M(s))dW(s)\|^2 \Big)dr \Big]\\
&+C\int_{t_n}^t \Big|\int_{t_n}^{s} \Big(1+\|v(r)\|_{L^{4\sigma+2}}^{2\sigma+1}+\|u_S^M(r)\|_{L^{4\sigma+2}}^{2\sigma+1}\Big)dr\Big|^2dr\\
&+C\int_{t_n}^t \Big\| \int_{t_n}^r P^M \Big(
S(t_n-s)g(S(s-t_n)v(r))-g(u_S^M(s)) 
 \Big)dW(s)\Big\|^2dr.
\end{align*}
Here the stochastic integral term will be dealt with similar to $Err_3^n$ and $Err_4^n$ and thus its estimate is omitted.

Using  \eqref{property-v} and \eqref{property-us}, as well as Burkholder's inequality, it follows that for $\bs_1\in \mathbb N^+,$
{\small
\begin{align*}
&\|\sum_{j=0}^{n}Err_3^{j}(t_{j+1})+Err_4^{j}(t_{j+1})\|_{L^{p}(\Omega_{R_1}^{n+1};\mathbb R)}\\
&\le 
C\Big(\int_{0}^{t_{n+1}}\Big(\E\Big[ \Big\|P^M u([r]_{\delta t})- u_{[r]}^M \Big\|^p\Big[\|(I-P^M)v(r)Q^{\frac 12}\|_{\mathcal L_2^0}+\|((S(r-t_n)-S(-\delta t))\\
&\qquad  v(r))Q^{\frac 12}\|_{\mathcal L_2^0}+\|(S(r-t_n)-S(-\delta t)) v(r)Q^{\frac 12}\|_{\mathcal L_2^0}\Big]^p \Big)^{\frac 2p}dr\Big)^{\frac 12}\\ 
&+C\Big(\int_{0}^{t_{n+1}} \Big(\E\Big[  \Big\|P^M v(r)-P^M v([r])+S(-\delta t) (u_S^M([r])-u_S^M(r))\Big\|^p \\ 
&\qquad \Big[\|(I-P^M)v(r)Q^{\frac 12}\|_{\mathcal L_2^0}+\|(S(r-t_n)-S(-\delta t)) v(r)Q^{\frac 12}\|_{\mathcal L_2^0}\\
&\qquad+\|(S(r-t_n)-S(-\delta t)) v(r)Q^{\frac 12}\|_{\mathcal L_2^0}\Big] ^p \Big)^{\frac 2p}dr\Big)^{\frac 12}\\
&\le \Big(\sum_{j\le n} \|P^M u(t_j)-u_{j}^M\|_{L^{2p}(\Omega;\mathbb H)}^2 \delta t\Big)^{\frac 12}  (\lambda_M^{-\frac {\bs_1} 2}+\delta t^{\min(\frac {\bs_1} 2,1)})
\sup_{r\in [0,t_{n+1}]}   \|u(r)\|_{L^{2p}(\Omega;\mathbb H^{\bs_1})} \\
 &+ \Big(\int_0^{t_{n+1}} \|P^M v(r)-P^M v([r])+S(-\delta t) (u_S^M([r])-u_S^M(r))\|_{L^{2p}(\Omega;\mathbb H)}^2 dr\Big)^{\frac 12}  \\
 &\times (\lambda_M^{-\frac {\bs_1} 2}+\delta t^{\min(\frac {\bs_1} 2,1)})
\sup_{r\in [0,t_{n+1}]}   \|u(r)\|_{L^{2p}(\Omega;\mathbb H^{\bs_1})}.
\end{align*} 
}
 By applying \eqref{property-v}, \eqref{property-us}, the Sobolev embedding theorem, Propositions \ref{1d-prop},  \ref{d=2-h2} and  \ref{Sta1}, it follows that for $r\in [t_j,t_{j+1}],$
 \begin{align*}
 &\|P^M v(r)-P^M v([r])+S(-\delta t) (u_S^M([r])-u_S^M(r))\|_{L^{2p}(\Omega;\mathbb H)}\\
 &\le 
 \Big\|\int_{t_n}^{{r}}\Big(S(t_n-s) f(S(s-t_n) v(s))-\frac 12S(t_n-s)  \alpha S(s-t_n) v(s)\Big)ds\Big\|_{L^{2p}(\Omega;\mathbb H)}\\\nonumber
 &+ \Big\|\int_{t_n}^{{r}}\Big(\bi \lambda |u_S^M(t_n)|^{2\sigma}u_S^M(s)-\frac 12 \alpha u_S^M(s)\Big) ds\Big\|_{L^{2p}(\Omega;\mathbb H)}\\
& + \Big \|\int_{t_n}^r P^M (S(t_n-s)  g(S(s-t_n) v(s))-g(u_S^M(s)))dW(s) \Big\|_{L^{2p}(\Omega;\mathbb H)}\\
&\le C\delta t +C\delta t^{\frac 12}\|P^Mu(t_j)-u_j^M\|_{L^{2p}(\Omega;\mathbb H)}
\\
&+C\delta t^{\frac 12}(\lambda_M^{-\frac {\bs_1} 2}+\delta t^{\min(\frac {\bs_1} 2,1)})
\sup_{r\in [0,t_{n+1}]}   \Big(\|u(r)\|_{L^{2p}(\Omega;\mathbb H^{\bs_1})}+\|u_{S}^M(r)\|_{L^{2p}(\Omega;\mathbb H^{\bs_1})}\Big).
 \end{align*} 
By expanding $\|P^M v(t)-S(-\delta t) u_S^M (t)\|^2$ at the initial error, then taking $L^p(\Omega_{R_1}^{n+1};\mathbb R)$ and using Propositions \ref{1d-prop},  \ref{d=2-h2} and  \ref{Sta1}, we have that 
\begin{align*}
&\|P^M v(t_{n+1})-S(-\delta t) u_S^M (t_{n+1})\|^2_{L^{2p}(\Omega_{R_1}^{n+1};\mathbb H)}\\
&\le \sum_{j\le n} \Big(\|Err_1^j(t_{j+1})\|_{L^{p}(\Omega_{R_1}^{n+1};\mathbb H)}+\|Err_2^j(t_{j+1})\|_{L^{p}(\Omega_{R_1}^{n+1};\mathbb H)}\Big)\\
&+\|\sum_{j\le n}Err_3^{j}(t_{j+1})+Err_4^{j}(t_{j+1})\|_{L^{p}(\Omega_{R_1}^{n+1};\mathbb R)}.
\end{align*}
Summarizing up the estimates of $Err_1^n-Err_4^n$, and using Young's inequality,
we obtain \eqref{err-for-mul}.
\end{proof}

\subsection{Strong convergence}
In this part, 
we provide a new type of stochastic Gronwall's inequality and combine it with Proposition  \ref{prop-aux} and Proposition \ref{prop-err-dec} to show the strong convergence of the proposed scheme.

\subsubsection{Truncated stochastic Gronwall's inequality}

The following lemma is based on a series of truncated subsets of $\Omega$ which has its own interests and could be used to study the strong convergence problem for general non-monotone SDEs and SPDEs. 

\begin{lm}\label{rough-gronwall}
Let $\{\Omega_{R_1}^{n}\}_{n\le N}$ be a sequence of subsets of $\Omega$ which is decreasing with respect to $n$ and increasing with respect to $R_1\ge 1$,  $\sigma_1\ge 0$ and $\epsilon_N\ge 0.$
Assume that a random positive sequence $\{a_{n}\}_{n\le N}$ in a Banach space $\mathbb E$ satisfies 
\begin{align*}
\sup_{n\le N}\|a_{n}\|_{L^{q_0}(\Omega;\mathbb E)}\le C(q_0), \; \forall  \; q_0\in \mathbb N^+,
\end{align*}
and that for some $C'\ge 0,$ 
{\small
\begin{align*}
\|a_{n+1}\|_{L^p(\Omega_{R_1}^{n+1};\mathbb E)} &\le C(1+R_1^{2\sigma_1}) \sum_{k=0}^n \|a_k\|_{L^p(\Omega_{R_1}^{k};\mathbb E)} \delta t + \epsilon_N
+ C'(\sum_{k=0}^{n} \|a_k\|_{L^p((\Omega_{R_1}^k)^c;\mathbb E)} \delta t)^{\frac 12}\epsilon_{N}^{\frac 12}. 
\end{align*} 
}
Then it holds that for some $l> 1,$
\begin{align*}
\|a_{n}\|_{L^p(\Omega;\mathbb E)} 
&\le \exp(C(T)R_1^{2\sigma_1}) \Big(\epsilon_N +C\epsilon_N^{\frac 12}  \sup_{k\le N}(\mathbb P((\Omega_{R_1}^k)^c))^{\frac 1{pl}}\Big)+C \sup_{k\le N} (\mathbb P((\Omega_{R_1}^k)^c))^{\frac 1{pl}}.
\end{align*}
\end{lm}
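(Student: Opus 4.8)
The plan is to split the estimate on all of $\Omega$ according to the decomposition $\Omega=\Omega_{R_1}^{n}\cup(\Omega_{R_1}^{n})^c$ and to iterate the given recursive bound on the ``good'' sets $\Omega_{R_1}^{n+1}$ via a discrete Gronwall argument, while controlling the contribution of the ``bad'' sets $(\Omega_{R_1}^{k})^c$ by their small probability together with the uniform $L^{q_0}$-moment bound. First I would establish the iterated version of the recursion on the good sets: starting from
\begin{align*}
\|a_{n+1}\|_{L^p(\Omega_{R_1}^{n+1};\mathbb E)} &\le C(1+R_1^{2\sigma_1}) \sum_{k=0}^n \|a_k\|_{L^p(\Omega_{R_1}^{k};\mathbb E)} \delta t + \epsilon_N\\
&\quad + C'\Big(\sum_{k=0}^{n} \|a_k\|_{L^p((\Omega_{R_1}^k)^c;\mathbb E)} \delta t\Big)^{\frac 12}\epsilon_{N}^{\frac 12},
\end{align*}
I first bound the last term. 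By H\"older's inequality with a conjugate pair $(l,l')$, $\tfrac1l+\tfrac1{l'}=1$, and the uniform moment assumption, $\|a_k\|_{L^p((\Omega_{R_1}^k)^c;\mathbb E)}\le \|a_k\|_{L^{pl}(\Omega;\mathbb E)}\,(\mathbb P((\Omega_{R_1}^k)^c))^{1/(pl')}\le C\,(\mathbb P((\Omega_{R_1}^k)^c))^{1/(pl')}$, so that with $\sum_{k}\delta t\le T$ the cross term is bounded by $C'\epsilon_N^{1/2}\sup_{k\le N}(\mathbb P((\Omega_{R_1}^k)^c))^{1/(2pl')}$; renaming $l'$ as $l$ (both are free exponents $>1$) this matches the target form.

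Next I would feed this bound into a discrete Gronwall iteration. Writing $b_n:=\|a_n\|_{L^p(\Omega_{R_1}^n;\mathbb E)}$, the recursion has the shape $b_{n+1}\le C(1+R_1^{2\sigma_1})\sum_{k=0}^n b_k\,\delta t + \widetilde\epsilon_N$ where $\widetilde\epsilon_N:=\epsilon_N+C'\epsilon_N^{1/2}\sup_{k}(\mathbb P((\Omega_{R_1}^k)^c))^{1/(2pl)}$ collects the two inhomogeneous terms. The discrete Gronwall lemma then yields $b_n\le \widetilde\epsilon_N\exp\big(C(1+R_1^{2\sigma_1})\,n\,\delta t\big)\le \exp(C(T)R_1^{2\sigma_1})\,\widetilde\epsilon_N$, using $n\delta t\le T$ and absorbing the constant into $C(T)$. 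This gives exactly the first group of terms in the claimed bound, namely $\exp(C(T)R_1^{2\sigma_1})(\epsilon_N+C\epsilon_N^{1/2}\sup_k(\mathbb P((\Omega_{R_1}^k)^c))^{1/(pl)})$, modulo adjusting the free exponent in the probability factor.

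Finally I would reassemble the estimate on all of $\Omega$. Writing $\|a_n\|_{L^p(\Omega;\mathbb E)}\le \|a_n\|_{L^p(\Omega_{R_1}^n;\mathbb E)}+\|a_n\|_{L^p((\Omega_{R_1}^n)^c;\mathbb E)}$, the first summand is controlled by the Gronwall step above, and the second is again bounded by $C\,(\mathbb P((\Omega_{R_1}^n)^c))^{1/(pl)}$ using H\"older's inequality and the uniform moment bound, producing the trailing term $C\sup_{k\le N}(\mathbb P((\Omega_{R_1}^k)^c))^{1/(pl)}$. The main subtlety I expect is bookkeeping the several free exponents consistently: the H\"older splitting on the bad sets introduces an exponent $l>1$ that must be the same one appearing in the final statement, and one must check that the power $1/(pl)$ of the probability is achievable simultaneously in both the Gronwall term and the remainder term while keeping the uniform $L^{pl}$-moments finite (which holds since the moment hypothesis is assumed for \emph{all} $q_0\in\mathbb N^+$). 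No single step is analytically hard; the content lies in organizing the truncation so that the smallness $\epsilon_N$ and the tail probability $\mathbb P((\Omega_{R_1}^k)^c)$ enter multiplicatively rather than being amplified by the Gronwall exponential.
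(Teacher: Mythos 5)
Your proposal is correct and follows essentially the same route as the paper: Hölder (with the uniform moment bound) to absorb the bad-set term into a quantity of order $\epsilon_N^{1/2}\sup_k(\mathbb P((\Omega_{R_1}^k)^c))^{1/(pl)}$, then the discrete Gronwall inequality on the good sets, then adding back the $(\Omega_{R_1}^n)^c$ contribution via the same Hölder/Chebyshev argument. Your extra care with the exponent bookkeeping (obtaining $1/(2pl')$ and renaming $2l'$ as $l$) is a harmless refinement of what the paper writes directly as $1/(pl)$.
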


\begin{proof}
By using the uniform boundedness of $a_n$ in $L^p(\Omega;\mathbb E)$, H\"older's inequality and the Chebyshev inequality, we have that for $l> 1,$
\begin{align*}
\|a_{n+1}\|_{L^p(\Omega_{R_1}^{n+1};\mathbb E)} \le  C (1+R_1^{2\sigma_1}) \sum_{k\le n} \|a_k\|_{L^p(\Omega_{R_1}^k;\mathbb E)} \delta t + C\epsilon_N^{\frac 12}\sup_{k\le N}   (\mathbb P((\Omega_{R_1}^k)^c))^{\frac 1{pl}}+\epsilon_N.
\end{align*} 
By using the discrete Gronwall's inequality, we achieve that 
\begin{align*}
\|a_{n}\|_{L^p(\Omega_{R_1}^n;\mathbb E)} \le \exp(C(T)R_1^{2\sigma_1})\Big(\epsilon_N +C\epsilon_N^{\frac 12} \sup_{k\le N}   (\mathbb P((\Omega_{R_1}^{k})^c))^{\frac 1{pl}} \Big).
\end{align*}
Using the Chebyshev inequality and the boundedness of  $a_n$ in $L^p(\Omega;\mathbb E)$
yield that 
\begin{align*}
\|a_{n}\|_{L^p(\Omega;\mathbb E)} 
&\le \exp(C(T)R_1^{2\sigma_1}) \Big(\epsilon_N +C\epsilon_N^{\frac 12}  (\mathbb P((\Omega_{R_1}^N)^c))^{\frac 1{pl}}\Big)+C(\mathbb P((\Omega_{R_1}^N)^c))^{\frac 1{pl}}.
\end{align*}
\end{proof}

\begin{cor}\label{str-con}
Under the assumption of Lemma \ref{rough-gronwall}, if $ \lim\limits_{R_1\to \infty} \lim\limits_{N\to\infty}\mathbb P((\Omega_{R_1}^N)^c)$ $=0,$ and $\lim\limits_{N\to \infty} \epsilon_N =0.$
Then $\sup\limits_{n\le N}\|a_{n}\|_{L^p(\Omega;\mathbb E)}$ is convergent to $0.$
\end{cor}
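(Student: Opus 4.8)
The statement to prove is Corollary \ref{str-con}, which is an immediate consequence of the truncated stochastic Gronwall's inequality in Lemma \ref{rough-gronwall}. The plan is to take the final bound produced by Lemma \ref{rough-gronwall} and extract a convergence statement by sending the parameters $N$ and $R_1$ to infinity in the correct order. First I would recall the conclusion of Lemma \ref{rough-gronwall}: for every $R_1\ge 1$ and some $l>1$,
\begin{align}\label{gron-out}
\sup_{n\le N}\|a_{n}\|_{L^p(\Omega;\mathbb E)}
&\le \exp(C(T)R_1^{2\sigma_1}) \Big(\epsilon_N +C\epsilon_N^{\frac 12}  \sup_{k\le N}(\mathbb P((\Omega_{R_1}^k)^c))^{\frac 1{pl}}\Big)+C \sup_{k\le N} (\mathbb P((\Omega_{R_1}^k)^c))^{\frac 1{pl}}.
\end{align}
The prefactor $\exp(C(T)R_1^{2\sigma_1})$ is constant in $N$, so for each \emph{fixed} $R_1$ the entire right-hand side is controlled once we understand the $N\to\infty$ behavior of $\epsilon_N$ and of $\sup_{k\le N}\mathbb P((\Omega_{R_1}^k)^c)$.

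The key observation is that $\{\Omega_{R_1}^k\}_{k\le N}$ is decreasing in $k$, so $\sup_{k\le N}\mathbb P((\Omega_{R_1}^k)^c)=\mathbb P((\Omega_{R_1}^N)^c)$, i.e.\ the worst complement is the terminal one. Thus \eqref{gron-out} simplifies to a bound in terms of $\epsilon_N$ and $\mathbb P((\Omega_{R_1}^N)^c)$ alone. The argument then proceeds by first fixing $R_1$ and letting $N\to\infty$: since $\lim_{N\to\infty}\epsilon_N=0$ by hypothesis, and $\limsup_{N\to\infty}\mathbb P((\Omega_{R_1}^N)^c)$ is some quantity depending only on $R_1$, the $\epsilon_N$-containing terms vanish while the last term stays bounded. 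Taking $\limsup_{N\to\infty}$ in \eqref{gron-out} leaves
\begin{align*}
\limsup_{N\to\infty}\sup_{n\le N}\|a_{n}\|_{L^p(\Omega;\mathbb E)}
&\le C\Big(\limsup_{N\to\infty}\mathbb P((\Omega_{R_1}^N)^c)\Big)^{\frac 1{pl}}.
\end{align*}
The remaining step is to send $R_1\to\infty$ in this inequality. By the hypothesis $\lim_{R_1\to\infty}\lim_{N\to\infty}\mathbb P((\Omega_{R_1}^N)^c)=0$, the right-hand side tends to $0$, and since the left-hand side is independent of $R_1$ it must itself be $0$, which is exactly the desired convergence of $\sup_{n\le N}\|a_n\|_{L^p(\Omega;\mathbb E)}$ to $0$.

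The only delicate point is the \emph{order} of the double limit, and this is where care is needed rather than heavy computation. Because the prefactor $\exp(C(T)R_1^{2\sigma_1})$ blows up as $R_1\to\infty$, one cannot send $R_1\to\infty$ first; the iterated limit must be taken with $N\to\infty$ on the inside, exactly matching the form of the hypothesis $\lim_{R_1\to\infty}\lim_{N\to\infty}\mathbb P((\Omega_{R_1}^N)^c)=0$. For fixed $R_1$ the factor $\exp(C(T)R_1^{2\sigma_1})$ is a harmless constant multiplying terms that already go to $0$ (through $\epsilon_N$) or that survive only in the form $C(\mathbb P((\Omega_{R_1}^N)^c))^{1/(pl)}$ without that exponential prefactor; I would make this bookkeeping explicit so that the surviving $R_1$-dependent term carries no exploding constant. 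This sequencing of limits, rather than any estimate, is the main obstacle, and it is resolved purely by respecting the nested-limit structure built into Lemma \ref{rough-gronwall} and its hypotheses.
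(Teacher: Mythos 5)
Your proposal is correct and follows essentially the same route as the paper: apply Lemma \ref{rough-gronwall}, let $N\to\infty$ first with $R_1$ fixed so that the $\epsilon_N$-terms (including those carrying the $\exp(C(T)R_1^{2\sigma_1})$ prefactor) vanish, leaving only $C(\mathbb P((\Omega_{R_1}^N)^c))^{\frac 1{pl}}$, and then send $R_1\to\infty$. Your additional remarks on the order of the iterated limit and on the monotonicity of $\Omega_{R_1}^k$ in $k$ simply make explicit what the paper's two-line proof leaves implicit.
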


\begin{proof}
By applying Lemma \ref{rough-gronwall} and letting $\epsilon_N \to 0$ firstly, 
we have that  
\begin{align*}
\|a_{n}\|_{L^p(\Omega;\mathbb E)} 
&\le  C \lim_{N\to\infty }\sup_{k\le N}(\mathbb P((\Omega_{R_1}^k)^c))^{\frac 1{pl}}\le  \lim_{R_1\to \infty}\lim_{N\to \infty} C (\mathbb P((\Omega_{R_1}^N)^c))^{\frac 1{pl}}.
\end{align*}
Taking $R_1$ goes to $\infty$, we complete the proof.
\end{proof}

When one is interested in the convergence rate of the numerical schemes, the following bootstrap type estimates will play a key role.

\begin{prop}\label{poly-gron}
Under the assumption of Corollary \ref{str-con}, suppose that 
$\mathbb P((\Omega_{R_1}^N)^c)$ $ \le C_1 R_1^{-p_1}$ for a large enough $p_1\in \mathbb N^+.$ Then it holds that for $\kappa>1,$ 
\begin{align*}
\|a_n\|_{L^p(\Omega;\mathbb E)}\le 2CC_1^{\frac 1{pl}}  \Big( \frac {\log((\epsilon_N+\epsilon_N^{\frac 12})^{-1})} {\kappa C(T)} \Big)^{-\frac {p_1}{2\sigma_1 p l}}.
\end{align*}
\end{prop}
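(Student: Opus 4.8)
The plan is to feed the polynomial tail hypothesis into the conclusion of Lemma~\ref{rough-gronwall} and then to choose the free radius $R_1$ so as to balance the exponential prefactor $\exp(C(T)R_1^{2\sigma_1})$ against the algebraic decay $R_1^{-p_1/(pl)}$. First I would record that, since $\{\Omega_{R_1}^k\}_{k\le N}$ is decreasing in $k$, the complements $(\Omega_{R_1}^k)^c$ are increasing in $k$, whence $\sup_{k\le N}\mathbb P((\Omega_{R_1}^k)^c)=\mathbb P((\Omega_{R_1}^N)^c)\le C_1R_1^{-p_1}$. Substituting this into the bound furnished by Lemma~\ref{rough-gronwall} gives, for every admissible $R_1\ge 1$,
\begin{align*}
\|a_n\|_{L^p(\Omega;\mathbb E)}
&\le \exp(C(T)R_1^{2\sigma_1})\Big(\epsilon_N+CC_1^{\frac1{pl}}\epsilon_N^{\frac12}R_1^{-\frac{p_1}{pl}}\Big)
+CC_1^{\frac1{pl}}R_1^{-\frac{p_1}{pl}}.
\end{align*}

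Next I would make the key choice $R_1:=\big(\log((\epsilon_N+\epsilon_N^{1/2})^{-1})/(\kappa C(T))\big)^{1/(2\sigma_1)}$, which is admissible once $N$ is large enough that the argument of the logarithm exceeds $1$. With this choice $\exp(C(T)R_1^{2\sigma_1})=\exp\big(\tfrac1\kappa\log((\epsilon_N+\epsilon_N^{1/2})^{-1})\big)=(\epsilon_N+\epsilon_N^{1/2})^{-1/\kappa}$, while $R_1^{-p_1/(pl)}=\big(\log((\epsilon_N+\epsilon_N^{1/2})^{-1})/(\kappa C(T))\big)^{-p_1/(2\sigma_1 pl)}$ is exactly the quantity appearing on the right-hand side of the claim. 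Hence the last summand above already equals the target bound $CC_1^{1/(pl)}(\cdots)^{-p_1/(2\sigma_1 pl)}$, and it remains only to show that the first summand is no larger, for $N$ large.

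For that I would split the first summand into the two pieces generated by $\epsilon_N$ and by $\epsilon_N^{1/2}R_1^{-p_1/(pl)}$. Using $\epsilon_N\le \epsilon_N+\epsilon_N^{1/2}$, the first piece is at most $(\epsilon_N+\epsilon_N^{1/2})^{1-1/\kappa}$, a genuine power of $\epsilon_N+\epsilon_N^{1/2}$ (since $\kappa>1$), which therefore tends to $0$ strictly faster than the target $(\log(\cdots))^{-p_1/(2\sigma_1 pl)}$ as $\epsilon_N\to0$. For the second piece, $\epsilon_N^{1/2}\le(\epsilon_N+\epsilon_N^{1/2})^{1/2}$ gives the factor $(\epsilon_N+\epsilon_N^{1/2})^{-1/\kappa}\epsilon_N^{1/2}\le \epsilon_N^{1/2-1/(2\kappa)}\to0$ (again because $\kappa>1$), so this piece equals $\epsilon_N^{1/2-1/(2\kappa)}$ times the target and is eventually dominated by it. Consequently, once $N$ is large enough both pieces together are bounded by $CC_1^{1/(pl)}(\cdots)^{-p_1/(2\sigma_1 pl)}$, and adding the last summand yields the factor $2$ in the assertion.

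The main obstacle I anticipate is the calibration in the middle step: the exponential prefactor and the polynomial tail pull in opposite directions, and the whole estimate only closes because the admissible decay of the target is merely logarithmic whereas the spurious contributions carry honest powers of $\epsilon_N$. Getting the exponent bookkeeping right---in particular verifying that $1-1/\kappa>0$ and $1/2-1/(2\kappa)>0$ under the sole hypothesis $\kappa>1$, so that the error pieces vanish relative to the slowly decaying main term---is where the argument must be handled with care; everything else is substitution together with the monotonicity of $\Omega_{R_1}^k$.
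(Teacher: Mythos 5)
Your proof is correct and follows essentially the same route as the paper: apply Lemma~\ref{rough-gronwall}, insert the polynomial tail bound, and choose $R_1=\big(\log((\epsilon_N+\epsilon_N^{1/2})^{-1})/(\kappa C(T))\big)^{1/(2\sigma_1)}$ so that the exponential prefactor becomes $(\epsilon_N+\epsilon_N^{1/2})^{-1/\kappa}$. The only difference is that you explicitly verify the balancing condition $\exp(C(T)R_1^{2\sigma_1})(\epsilon_N+\epsilon_N^{1/2})\le CC_1^{1/(pl)}R_1^{-p_1/(pl)}$ via the exponents $1-1/\kappa>0$ and $1/2-1/(2\kappa)>0$, a step the paper simply imposes as a hypothesis on the choice of $R_1$.
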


\begin{proof}
For convenience, we assume that $C (\mathbb P((\Omega_{R_1}^N)^c))^{\frac 1{pl}} \le 1$. 
According to Lemma \ref{rough-gronwall}, letting $\exp(R_1^{2\sigma_1}C(T))(\epsilon_N+\epsilon_N^{\frac 12})\le CC_1^{\frac 1{pl}} R_1^{-\frac {p_1} {pl}}$, 
we have that for $\kappa>0$
\begin{align*} 
\|a_{n}\|_{L^p(\Omega;\mathbb H)} 
&\le 2CC_1^{\frac 1{pl}} R_1^{-\frac {p_1} {pl}}.
\end{align*}
Taking $R_1= [\frac {\log((\epsilon_N+\epsilon_N^{\frac 12})^{-1})} {\kappa C(T)}]^{\frac 1{2\sigma_1}}$ leads to the desired result.
\end{proof}

\begin{prop}\label{exp-con}
Under the assumption of Corollary \ref{str-con}, suppose that 
$\mathbb P((\Omega_{R_1}^N)^c)$ $\le C_1\exp(-\eta R_1^{2\sigma_2})$ with $\eta>0, \sigma_2>0$. Then it holds that 
\begin{align}\label{gen-exp-lm}
\|a_n\|_{L^p(\Omega;\mathbb E)}\le  2C C_1^{\frac 1{pl}} \exp \Big(-\frac  \eta {pl}  \big(\frac {\log((\epsilon_N+\epsilon_N^{\frac 12})^{-1})}{C(T)} \big)^{\frac {\sigma_2}{\sigma_1}} \Big).
\end{align}
In particular, when $\sigma_1< \sigma_2,$  it holds that for any $\gamma_1 \in (0,1),$
\begin{align}\label{exp-gron-lm}
\|a_n\|_{L^p(\Omega;\mathbb E)}\le C \epsilon_N ^{1-\gamma_1}.
\end{align}
\end{prop}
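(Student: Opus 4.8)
The plan is to feed the exponential tail hypothesis directly into the master estimate of Lemma \ref{rough-gronwall} and then optimise over the free truncation parameter $R_1$, exactly in the spirit of Proposition \ref{poly-gron} but now balancing an exponential rather than a polynomial tail. First I would record that, since $\{\Omega_{R_1}^k\}_{k\le N}$ is decreasing in $k$, the complements $(\Omega_{R_1}^k)^c$ increase in $k$, so that $\sup_{k\le N}\mathbb P((\Omega_{R_1}^k)^c)=\mathbb P((\Omega_{R_1}^N)^c)\le C_1\exp(-\eta R_1^{2\sigma_2})$. Substituting this into the conclusion of Lemma \ref{rough-gronwall} produces, for every admissible $R_1\ge 1$, the master inequality
\begin{align*}
\|a_n\|_{L^p(\Omega;\mathbb E)}\le \exp(C(T)R_1^{2\sigma_1})\Big(\epsilon_N+C\epsilon_N^{\frac12}C_1^{\frac1{pl}}\exp(-\tfrac{\eta}{pl}R_1^{2\sigma_2})\Big)+CC_1^{\frac1{pl}}\exp(-\tfrac{\eta}{pl}R_1^{2\sigma_2}).
\end{align*}

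Second, to obtain \eqref{gen-exp-lm} I would choose $R_1$ so that the prefactor $\exp(C(T)R_1^{2\sigma_1})$ cancels the scale of $\epsilon_N$, namely $R_1=\big(\tfrac{\log((\epsilon_N+\epsilon_N^{1/2})^{-1})}{C(T)}\big)^{1/(2\sigma_1)}$, which gives $\exp(C(T)R_1^{2\sigma_1})=(\epsilon_N+\epsilon_N^{1/2})^{-1}$. With this calibration the elementary bounds $(\epsilon_N+\epsilon_N^{1/2})^{-1}\epsilon_N\le\epsilon_N^{1/2}$ and $(\epsilon_N+\epsilon_N^{1/2})^{-1}\epsilon_N^{1/2}\le 1$ collapse the first two contributions, and the identity $R_1^{2\sigma_2}=(R_1^{2\sigma_1})^{\sigma_2/\sigma_1}=\big(\tfrac{\log((\epsilon_N+\epsilon_N^{1/2})^{-1})}{C(T)}\big)^{\sigma_2/\sigma_1}$ reproduces precisely the exponential factor in \eqref{gen-exp-lm}. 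In the regime in which \eqref{gen-exp-lm} records genuine exponential decay, i.e. $\sigma_2/\sigma_1\le 1$, the tail $\exp(-\tfrac{\eta}{pl}R_1^{2\sigma_2})$ decays no faster than a fixed power of $\epsilon_N+\epsilon_N^{1/2}$, so the residual $\epsilon_N^{1/2}$ is absorbed into $2CC_1^{1/(pl)}\exp(-\tfrac{\eta}{pl}R_1^{2\sigma_2})$ after enlarging the constant.

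Third, for the ``in particular'' assertion \eqref{exp-gron-lm} with $\sigma_1<\sigma_2$ I would instead tune $R_1$ to the target polynomial rate: pick $R_1=\big(\tfrac{\gamma_1\log(\epsilon_N^{-1})}{C(T)}\big)^{1/(2\sigma_1)}$ for the prescribed $\gamma_1\in(0,1)$, so that $\exp(C(T)R_1^{2\sigma_1})\epsilon_N=\epsilon_N^{1-\gamma_1}$. Because $\sigma_2/\sigma_1>1$, the quantity $R_1^{2\sigma_2}=(R_1^{2\sigma_1})^{\sigma_2/\sigma_1}$ grows like $(\log\epsilon_N^{-1})^{\sigma_2/\sigma_1}$, strictly faster than $\log\epsilon_N^{-1}$; hence $\exp(-\tfrac{\eta}{pl}R_1^{2\sigma_2})$ tends to zero faster than any power of $\epsilon_N$ and, together with the cross term, is dominated by $\epsilon_N^{1-\gamma_1}$ once $N$ is large (using $\lim_N\epsilon_N=0$ from Corollary \ref{str-con}). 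Collecting the three contributions of the master inequality then yields $\|a_n\|_{L^p(\Omega;\mathbb E)}\le C\epsilon_N^{1-\gamma_1}$.

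I expect the main obstacle to be the calibration of $R_1$ and the bookkeeping that decides which competing term dominates: the prefactor $\exp(C(T)R_1^{2\sigma_1})$ and the tail $\exp(-\tfrac{\eta}{pl}R_1^{2\sigma_2})$ pull in opposite directions, and the crossover is governed exactly by the sign of $\sigma_2-\sigma_1$. The delicate point is to verify that the stray $\epsilon_N^{1/2}$ and the cross term $\exp(C(T)R_1^{2\sigma_1})\epsilon_N^{1/2}\exp(-\tfrac{\eta}{pl}R_1^{2\sigma_2})$ are genuinely absorbed rather than spoiling the clean form of the estimates; it is precisely here that the case distinction $\sigma_1\ge\sigma_2$ (for \eqref{gen-exp-lm}) versus $\sigma_1<\sigma_2$ (for \eqref{exp-gron-lm}) enters, and a correct choice of $R_1$ in each regime is what makes the argument close.
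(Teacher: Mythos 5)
Your proposal is correct in substance, and it splits naturally into two halves. For \eqref{gen-exp-lm} you follow essentially the paper's route: feed the exponential tail into the conclusion of Lemma \ref{rough-gronwall} and calibrate $R_1$ by $\exp(C(T)R_1^{2\sigma_1})=(\epsilon_N+\epsilon_N^{1/2})^{-1}$, which reproduces the stated exponent $R_1^{2\sigma_2}=\big(\log((\epsilon_N+\epsilon_N^{1/2})^{-1})/C(T)\big)^{\sigma_2/\sigma_1}$. Your worry about the stray $\epsilon_N^{1/2}$ is well placed: it is absorbed only when $\exp(-\tfrac{\eta}{pl}R_1^{2\sigma_2})$ decays no faster than $\epsilon_N^{1/2}$, i.e.\ when $\sigma_2<\sigma_1$, or $\sigma_2=\sigma_1$ with $\eta/(plC(T))\le 1/2$; the paper's own proof hides this behind the unverified constraint $\exp(R_1^{2\sigma_1}C(T))(\epsilon_N+\epsilon_N^{1/2})\le CC_1^{1/(pl)}\exp(-\tfrac{\eta}{pl}R_1^{2\sigma_2})$, so you inherit rather than create this imprecision.

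For \eqref{exp-gron-lm} your argument is genuinely different from, and cleaner than, the paper's. The paper first discards the tail-probability factor in the cross term (bounding it crudely by $\epsilon_N^{1/2}$), obtains only an $O\big((\epsilon_N+\epsilon_N^{1/2})^{1-\gamma}\big)$ bound, and then runs an iterated bootstrap, re-inserting each improved bound for $\|a_k\|_{L^p((\Omega_{R}^k)^c;\mathbb E)}$ into the hypothesis of Lemma \ref{rough-gronwall} to push the exponent up to $1-\gamma_1$. You instead keep the factor $\exp(-\tfrac{\eta}{pl}R_1^{2\sigma_2})$ attached to the cross term and observe that, with $R_1^{2\sigma_1}=\gamma_1\log(\epsilon_N^{-1})/C(T)$ and $\sigma_2/\sigma_1>1$, this factor equals $\exp(-c(\log\epsilon_N^{-1})^{\sigma_2/\sigma_1})=o(\epsilon_N^m)$ for every $m$, so both the cross term and the pure tail term are dominated by the calibrated first term $\epsilon_N^{1-\gamma_1}$ in a single pass. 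This buys a one-shot proof in place of the iteration; the paper's bootstrap has the compensating virtue of not relying on the cross term carrying the tail probability, which makes it reusable in situations where that factor has already been discarded upstream. Either way the conclusion \eqref{exp-gron-lm} holds, so I see no gap in your plan beyond the (shared) looseness in the absorption step for \eqref{gen-exp-lm}.
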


\begin{proof}
For convenience, we assume that $C (\mathbb P((\Omega_{R_1}^N)^c))^{\frac 1{pl}} \le 1$. 
 Applying Lemma \ref{rough-gronwall} and  letting $\exp(R_1^{2\sigma_1}C(T))(\epsilon_N+\epsilon_N^{\frac 12})\le CC_1^{\frac 1 {pl}}\exp(-\frac  \eta {pl} R_1^{2\sigma_2})$,   we have that 
\begin{align*}
\|a_n\|_{L^p(\Omega;\mathbb H)} \le  2C C_1^{\frac 1 {pl}}\exp(-\frac \eta {pl}  R_1^{2\sigma_2}).
\end{align*}
Taking $R_1=[\frac {(\epsilon_N+\epsilon_N^{\frac 12})^{-1}}{\kappa C(T)}]^{\frac 1{2\sigma_1}}$ yields that 
\begin{align*}
\|a_n\|_{L^p(\Omega;\mathbb E)} \le 2C C_1^{\frac 1{pl}} \exp \Big(-\frac  \eta {pl}  \big(\frac {\log((\epsilon_N+\epsilon_N^{\frac 12})^{-1})}{\kappa C(T)} \big)^{\frac {\sigma_2}{\sigma_1}} \Big).
\end{align*}
It can be seen that for $\sigma_1= \sigma_2$, 
$
\|a_{n}\|_{L^p(\Omega;\mathbb E)} 
\le 2CC_1^{\frac 1{pl}} (\epsilon_N+\epsilon_N^{\frac 12})^{\frac \eta {pl \kappa C(T)}}.
$
When $\sigma_1<\sigma_2$,  we  use the following bootstrap arguments to improve the convergence rate.
Applying Lemma \ref{rough-gronwall} yields that for a large enough $R_1>0$ such that 
\begin{align*}
\|a_{n}\|_{L^p(\Omega;\mathbb E)} 
&\le \exp(R_1^{2\sigma_1}C(T)) \Big(\epsilon_N+C\epsilon_N^{\frac 12} (\mathbb P(\Omega_{R_1}^c))^{\frac 1{pl}} \Big)+C(\mathbb P(\Omega_{R_1}^c))^{\frac 1{pl}}\\
&\le \exp(R_1^{2\sigma_1}C(T))  \Big( \epsilon_N+\epsilon_N^{\frac 12} \Big)
+ CC_1^{\frac 1 {p l}} \exp\Big(-\frac {\eta}{p l} R_1^{2\sigma_2} \Big).
\end{align*}
Taking $\frac {\eta}{pl}R_1^{2\sigma_2}=\log\Big((\epsilon_N+\epsilon_N^{\frac 12})^{-1}\Big),$
we have that 
\begin{align*}
\|a_{n}\|_{L^p(\Omega;\mathbb E)} 
&\le \big(1+\exp(R_1^{2\sigma_1}C(T))\big) \epsilon_N^{\frac 12}\\
&\le \Big(1+\exp\Big( (\frac {pl}{\eta})^{\frac {\sigma_1}{\sigma_2}} \log^{\frac {\sigma_1}{\sigma_2}}\big((\epsilon_N+\epsilon_N^{\frac 12})^{-1}\big)  C(T)\Big) \Big)(\epsilon_N+\epsilon_N^{\frac 12}).
\end{align*}
Noticing that $
\exp\Big( (\frac {pl}{\eta})^{\frac {\sigma_1}{\sigma_2}} \log^{\frac {\sigma_1}{\sigma_2}}\big((\epsilon_N+\epsilon_N^{\frac 12})^{-1}\big)  C(T)\Big)\le  (\epsilon_N+\epsilon_N^{\frac 12})^{-\gamma}$, $\forall \; \gamma\in (0,1),$
it follows that 
\begin{align*}
\|a_{n}\|_{L^p(\Omega;\mathbb E)} \le (\epsilon_N+\epsilon_N^{\frac 12})^{1-\gamma}.
\end{align*}
Now applying again Lemma \ref{rough-gronwall} and Young's inequality, we get for sufficient small $\gamma>0,$
\begin{align*}
\|a_{n}\|_{L^p(\Omega;\mathbb E)} 
&\le \exp(R_2^{2\sigma_1}C(T)) \Big(\epsilon_N +\epsilon_N^{\frac 12}   (\epsilon_N+\epsilon_N^{\frac 12})^{1-\gamma} \Big)+C(\mathbb P((\Omega_{R_2}^N)^c))^{\frac 1{pl}}\\
&\le C\exp(R_2^{2\sigma_1}C(T)) \Big(\epsilon_N +\epsilon_N^{1-\gamma} ) +C(\mathbb P((\Omega_{R_2}^N)^c))^{\frac 1{pl}}.
\end{align*}
By repeating the above procedures and taking $\frac {\eta}{pl}R_2^{2\sigma_2}=\log\Big((\epsilon_N+\epsilon_N^{1-\gamma})^{-1}\Big),$ we complete the proof.
\end{proof}

At the end of this part, we present one more estimate which will be used in studying 2D SNLSEs. 

\begin{prop}\label{prop-log-gron}
Under the assumption of Corollary \ref{str-con}, suppose that $\mathbb P((\Omega^N_{R_1})^c)$ $\le C_1 \log^{-p_1}(R_1)$ for a large enough $p_1\ge 1,$
Then it holds that 
\begin{align}\label{log-gron}
\|a_n\|_{L^{p}(\Omega;\mathbb E)}\le 2C C_1^{\frac 1{pl}} (2\sigma_1)^{\frac {p_1}{pl}} \log^{-\frac {p_1}{pl} }\Big(\log \big((\epsilon_N+\epsilon^{\frac 12} \big)^{-1})-\log(\kappa C(T))\Big).
\end{align}
\end{prop}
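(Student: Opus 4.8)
The plan is to transcribe the arguments of Propositions~\ref{poly-gron} and \ref{exp-con}, replacing the power (resp.\ exponential) tail by the logarithmic one, so that nothing beyond Lemma~\ref{rough-gronwall} is needed. Since $\Omega_{R_1}^k$ is decreasing in $k$, the hypothesis gives $\sup_{k\le N}\mathbb P((\Omega_{R_1}^k)^c)=\mathbb P((\Omega_{R_1}^N)^c)\le C_1\log^{-p_1}(R_1)$, hence $\big(\sup_{k\le N}\mathbb P((\Omega_{R_1}^k)^c)\big)^{\frac 1{pl}}\le C_1^{\frac 1{pl}}\log^{-\frac{p_1}{pl}}(R_1)$. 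Inserting this into the conclusion of Lemma~\ref{rough-gronwall}, and assuming as before that $R_1$ is large enough that $C(\mathbb P((\Omega_{R_1}^N)^c))^{\frac 1{pl}}\le 1$, I obtain
\begin{align*}
\|a_{n}\|_{L^p(\Omega;\mathbb E)}
\le \exp(C(T)R_1^{2\sigma_1})\big(\epsilon_N+\epsilon_N^{\frac 12}\big)
+CC_1^{\frac 1{pl}}\log^{-\frac{p_1}{pl}}(R_1).
\end{align*}

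Next I would choose $R_1$ exactly as in the polynomial case, namely
\begin{align*}
R_1=\Big[\frac{\log((\epsilon_N+\epsilon_N^{\frac 12})^{-1})}{\kappa C(T)}\Big]^{\frac 1{2\sigma_1}},\qquad \kappa>1,
\end{align*}
so that $\exp(C(T)R_1^{2\sigma_1})=(\epsilon_N+\epsilon_N^{\frac 12})^{-\frac 1\kappa}$ and the first summand is of order $(\epsilon_N+\epsilon_N^{\frac 12})^{1-\frac 1\kappa}$, which vanishes as $N\to\infty$ because $\kappa>1$. The hard part will be to verify that this first summand is dominated by the second: the tail term decays only like $\log^{-p_1/(pl)}(R_1)$, i.e.\ doubly logarithmically in $\epsilon_N^{-1}$, whereas the first summand decays like a genuine power of $\epsilon_N+\epsilon_N^{\frac 12}$, so the comparison $\exp(C(T)R_1^{2\sigma_1})(\epsilon_N+\epsilon_N^{\frac 12})\le CC_1^{\frac 1{pl}}\log^{-\frac{p_1}{pl}}(R_1)$ holds for all sufficiently large $N$. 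This is the only point requiring care, and it is exactly where $\kappa>1$ and the largeness of $p_1$ enter. Once the domination is in force the two contributions merge into $\|a_n\|_{L^p(\Omega;\mathbb E)}\le 2CC_1^{\frac 1{pl}}\log^{-\frac{p_1}{pl}}(R_1)$.

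Finally I would substitute the chosen $R_1$ into this last bound. Using
\begin{align*}
\log(R_1)=\frac{1}{2\sigma_1}\log\Big(\frac{\log\big((\epsilon_N+\epsilon_N^{\frac 12})^{-1}\big)}{\kappa C(T)}\Big),
\end{align*}
raising to the power $-\frac{p_1}{pl}$ produces the constant $(2\sigma_1)^{\frac{p_1}{pl}}$ and the asserted logarithmic rate, giving
\begin{align*}
\|a_n\|_{L^{p}(\Omega;\mathbb E)}\le 2C C_1^{\frac 1{pl}} (2\sigma_1)^{\frac {p_1}{pl}} \log^{-\frac {p_1}{pl} }\Big(\log \big((\epsilon_N+\epsilon_N^{\frac 12})^{-1}\big)-\log(\kappa C(T))\Big).
\end{align*}
The whole argument is thus a one-to-one analogue of the polynomial-tail Proposition~\ref{poly-gron}, with $R_1^{-p_1/(pl)}$ systematically replaced by $\log^{-p_1/(pl)}(R_1)$; the only conceptual feature is the very slow, doubly logarithmic decay of the final error, which reflects the weak logarithmic control available on the exceptional sets $(\Omega_{R_1}^N)^c$ in the $2$D regime.
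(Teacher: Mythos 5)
Your proof is correct and follows essentially the same route as the paper's: apply Lemma \ref{rough-gronwall} with the logarithmic tail bound, choose $R_1=[\log((\epsilon_N+\epsilon_N^{1/2})^{-1})/(\kappa C(T))]^{1/(2\sigma_1)}$, and absorb the term $\exp(C(T)R_1^{2\sigma_1})(\epsilon_N+\epsilon_N^{1/2})$ into the doubly logarithmic tail term. Your explicit verification that the power-type first summand $(\epsilon_N+\epsilon_N^{1/2})^{1-1/\kappa}$ is eventually dominated by $\log^{-p_1/(pl)}(R_1)$ is a welcome clarification of a step the paper leaves implicit in the phrase ``letting $\exp(R_1^{2\sigma_1}C(T))(\epsilon_N+\epsilon_N^{1/2})\le CC_1^{1/(pl)}\log^{-p_1/(pl)}(R_1)$.''
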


\begin{proof}
Without loss of generality, we assume that $C  (\mathbb P((\Omega_{R_1}^N)^c))^{\frac 1{pl}} \le 1.$ 
Applying Lemma \ref{rough-gronwall} and  letting $\exp(R_1^{2\sigma_1}C(T))(\epsilon_N+\epsilon_N^{\frac 12})\le CC_1^{\frac 1 {pl}}\log^{-\frac {p_1}{pl}}(R_1)$,   we have that 
\begin{align*}
\|a_n\|_{L^p(\Omega;\mathbb E)} \le  2C C_1^{\frac 1 {pl}}\log^{-\frac {p_1}{pl}}(R_1).
\end{align*}
Taking $R_1=[\frac {\log((\epsilon_N+\epsilon_N^{\frac 12} )^{-1})}{\kappa C(T)}]^{\frac 1{2\sigma_1}}$ yields that 
\begin{align*}
\|a_n\|_{L^p(\Omega;\mathbb E)} &\le 2C C_1^{\frac 1{pl}} \log^{-\frac {p_1}{pl} }\Big(
\Big[\frac {\log((\epsilon_N+\epsilon_N^{\frac 12} )^{-1})}{\kappa C(T)}\Big]^{\frac 1{2\sigma_1}}\Big)\\
&\le 2C C_1^{\frac 1{pl}} (2\sigma_1)^{\frac {p_1}{pl}} \log^{-\frac {p_1}{pl} }\Big(\log \big((\epsilon_N+\epsilon_N^{\frac 12} )\big)^{-1}-\log(\kappa C(T))\Big).
\end{align*}

\end{proof}

\subsubsection{Strong convergence}
By combining the truncated stochastic Gronwall's inequality (Lemma \ref{rough-gronwall}, Propositions \ref{poly-gron}-\ref{exp-con} and Corollary \ref{prop-log-gron}), and the regularity estimates and tail estimates of the exact and numerical solutions in Section 2 and Section 4, we are in a position to show several convergence properties of the proposed schemes for SNLSEs.

\begin{tm}\label{tm-1d}
Let $T>0, p\in \mathbb N^+$, $d=1$ and \eqref{con-spa-tim} hold. Suppose that Assumption \ref{add} or \ref{mul} holds with $\bs \ge 1$. Then 
the  scheme \eqref{spl}  satisfies that
\begin{align}\label{1d-err-add}
&\sup_{n\le N}\|u(t_n)-u_n\|_{L^{2p}(\Omega;\mathbb H)}\le C(T,Q,\varPsi,\lambda,\sigma,p)  \log^{-\kappa_1}\Big((\delta t^{\frac 12}+\lambda_M^{-\frac \bs 2})^{-1}\Big), \forall \; \kappa_1 \ge 1.
\end{align}
Furthermore, for $g(\xi)=\bi \xi$ and $\sigma=1,$ it holds that
\begin{align}\label{1d-err-mul}
&\sup_{n\le N}\|u(t_n)-u_n\|_{L^{2p}(\Omega;\mathbb H)}\le C(T,Q,\varPsi,\lambda,p)\Big(\delta t^{\frac 12}+\lambda_M^{-\frac \bs 2}\Big)^{1-\gamma}, \; \forall \; \gamma\in (0,1).
\end{align}
\end{tm}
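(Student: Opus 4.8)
The plan is to assemble the strong convergence result for 1D SNLSEs by feeding the error decomposition of Section \ref{sub-err-dec} into the truncated stochastic Gronwall framework of Lemma \ref{rough-gronwall} and its corollaries, with the choice of tail-probability decay dictating whether one gets the logarithmic bound \eqref{1d-err-add} or the polynomial rate \eqref{1d-err-mul}. First I would set $a_n := \|P^M u(t_n)-u_n^M\|$ as the random sequence in the Banach space $\mathbb E=\mathbb H$, and verify its uniform $L^{q_0}(\Omega;\mathbb H)$ boundedness for all $q_0$, which follows from Proposition \ref{1d-prop} (regularity of the exact solution) together with Proposition \ref{Sta1} and Proposition \ref{cor-high} (stability and higher regularity of the scheme). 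The triangle inequality $\|u(t_n)-u_n\|\le \|u(t_n)-P^M u(t_n)\|+\|P^M u(t_n)-u_n^M\|$ reduces everything to the projected error, since the spectral projection error $\|u(t_n)-P^Mu(t_n)\|$ is controlled by $C\lambda_M^{-\bs/2}$ via \eqref{strong-spe} and the $\mathbb H^{\bs}$-regularity.

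Next I would identify the abstract hypotheses of Lemma \ref{rough-gronwall} with the concrete error representation. In the additive noise case this is Proposition \ref{prop-aux} (formula \eqref{err-for-add}); in the multiplicative case it is Proposition \ref{prop-err-dec} (formula \eqref{err-for-mul}). Reading off the parameters: the local consistency term is $\epsilon_N \sim \delta t^{1/2}+\lambda_M^{-\bs/2}$, the growth exponent is $\sigma_1=\sigma'=\frac\sigma2$ (for $d=1$), and the truncation sets are precisely $\{\Omega_{R_1}^n\}$ defined in \eqref{trun-seq}, which are decreasing in $n$ and increasing in $R_1$ as required. The crux is then supplying the tail estimate $\mathbb P((\Omega_{R_1}^N)^c)$. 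For the general additive/defocusing case with only polynomial moment control, Corollary \ref{1d-cor} gives $\mathbb P(\sup\|u\|_{\mathbb H^{\bs}}\ge R_1)\le C R_1^{-p_1}$ via \eqref{poly-hs}, and the matching numerical tail estimate \eqref{tail-est-un} of Corollary \ref{tail-num}; combining these with the union over the two events in \eqref{trun-seq} yields $\mathbb P((\Omega_{R_1}^N)^c)\le C_1 R_1^{-p_1}$. Invoking Proposition \ref{poly-gron} with this polynomial decay and optimizing over $R_1$ produces exactly the logarithmic bound \eqref{1d-err-add}, since there the radius is chosen as $R_1=[\log((\epsilon_N+\epsilon_N^{1/2})^{-1})/(\kappa C(T))]^{1/(2\sigma_1)}$ and $p_1$ is free.

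For the sharper multiplicative cubic case $g(\xi)=\bi\xi$, $\sigma=1$, I would instead use the exponential tail estimates: Corollary \ref{1d-cor} provides \eqref{exp-h1}, $\mathbb P(\sup\|u\|_{\mathbb H^1}\ge R_2)\le C\exp(-\eta_1 R_2^2)$, and Proposition \ref{exp-1d} gives the matching numerical estimate. Hence $\mathbb P((\Omega_{R_1}^N)^c)\le C_1\exp(-\eta R_1^{2\sigma_2})$ with $\sigma_2=1$, while the nonlinear growth exponent is $\sigma_1=\sigma'=\frac12<1=\sigma_2$. This strict inequality $\sigma_1<\sigma_2$ is exactly the regime covered by the bootstrap conclusion \eqref{exp-gron-lm} of Proposition \ref{exp-con}, which upgrades the naive logarithmic estimate into $\|a_n\|_{L^{2p}(\Omega;\mathbb H)}\le C\epsilon_N^{1-\gamma}$ for every $\gamma\in(0,1)$, giving \eqref{1d-err-mul} after reinserting $\epsilon_N\sim\delta t^{1/2}+\lambda_M^{-\bs/2}$. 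The main obstacle I anticipate is verifying that the structure of \eqref{err-for-mul} genuinely matches the recursion hypothesis of Lemma \ref{rough-gronwall}: the last term of \eqref{err-for-mul} couples the error on the complement $(\Omega_{R_1}^{[r]})^c$ with the consistency scale through a square root, and one must check that, after absorbing the $\mathbb H^{\bs_1}$-regularity factors (uniformly bounded by Propositions \ref{1d-prop} and \ref{Sta1}) and applying Young's inequality, this precisely reproduces the term $C'(\sum_k\|a_k\|_{L^p((\Omega_{R_1}^k)^c;\mathbb E)}\delta t)^{1/2}\epsilon_N^{1/2}$ rather than a worse nonlinear feedback; getting the bookkeeping of the exponents $\sigma'$, the projection orders $\bs,\bs_1$, and the step-size constraint \eqref{con-spa-tim} mutually consistent is where the care is needed.
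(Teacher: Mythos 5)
Your proposal is correct and follows essentially the same route as the paper: both feed the error decompositions \eqref{err-for-add} and \eqref{err-for-mul} into the truncated stochastic Gronwall framework, taking $\sigma_1=\sigma'$, $\epsilon_N\sim\delta t^{1/2}+\lambda_M^{-\bs/2}$, and the tail estimates of Corollaries \ref{1d-cor} and \ref{tail-num} (polynomial, hence Proposition \ref{poly-gron} and the logarithmic rate) versus the exponential tails of \eqref{exp-h1} and Proposition \ref{exp-1d} with $\sigma_1=\tfrac12<\sigma_2=1$ (hence \eqref{exp-gron-lm} and the rate $\epsilon_N^{1-\gamma}$). The only cosmetic difference is that the paper explicitly takes $a_n=\|u(t_n)-u_n\|^2$ in the multiplicative case to match the squared-norm recursion of \eqref{err-for-mul}, a point you flag as a bookkeeping concern rather than state outright.
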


\begin{proof}
In the additive noise case, applying Lemma \ref{rough-gronwall} and Proposition \ref{poly-gron}  to \eqref{err-for-add} 
with $a_n=\|u(t_n)-u_n\|$, $C'=0$, $\sigma_1=\sigma’$, $\epsilon_N\sim  O(\delta t^{\frac 12}+\lambda_M^{-\frac \bs 2})$, and using the tail estimates in Corollary \ref{1d-cor}   yield \eqref{1d-err-add}.
In the multiplicative noise case, applying Lemma \ref{rough-gronwall} and Proposition \ref{poly-gron} to \eqref{err-for-mul}, taking $a_n=\|u(t_n)-u_n\|^2$, $\bs=\bs_1$ and $\epsilon_N\sim  O(\delta t^{\frac 1 2}+\lambda_M^{-\frac \bs 2}), \sigma_1=\sigma'$, using Corollarys \ref{1d-cor} and \ref{tail-num}, Propositions \ref{1d-prop} and \ref{cor-high}, lead to \eqref{1d-err-add}.

When $g(\xi)=\bi \xi$ and $\sigma=1,$  applying Lemma \ref{rough-gronwall} and \eqref{exp-gron-lm} with $a_n=\|u(t_n)-u_n\|^2$,
$\sigma_1=\sigma',$ $\bs=\bs_1$ and $\epsilon_N\sim O(\delta t^{\frac 1 2}+\lambda_M^{-\frac \bs 2})$, using Corollarys \ref{1d-cor} and \ref{tail-num}, Propositions \ref{1d-prop},  \ref{cor-high} and \ref{exp-1d}, yield \eqref{1d-err-mul}.
\end{proof}

Compared with the existing strongly convergent results for 1D SNLSEs \cite{CH17,CHLZ19}, our strong analysis only requires $\mathbb H^{1}$-regularity to achieve the strong convergence order $\frac 12$ in time. The advantage of the implicit splitting Crank--Nicolson scheme in \cite{CH17, CHLZ19} is its unconditional stability in energy space. By applying the truncated stochastic Gronwall's inequality and Proposition \ref{exp-con} to its error decomposition one can obtain \eqref{1d-err-mul} if $d=1, \sigma<2$ and the convergence rate $O(\lambda_M^{\bs}+\delta t)^{\frac \eta {2pl \kappa C(T)}}$ if $d=1,\sigma=2.$ When $T$, the scale of $W$, or $\|\varPsi\|$ is sufficiently small,  via the large deviation principle of SNLSEs and our current approach, it seems possible to prove the strong convergence estimate \eqref{1d-err-mul} for general $\sigma$.  This will be investigated in the future.

\begin{tm}\label{tm-2d}
Let $T>0, p\in \mathbb N^+$, $d=2$ and \eqref{con-spa-tim} hold. Suppose that Assumption \ref{add} or \ref{mul} holds with $\bs \ge 2$. 
The scheme \eqref{spl} satisfies that for $\kappa_1\ge 1,$
\begin{align}\label{con-2d}
\sup_{n\le N}\|u(t_n)-u_n\|_{L^{2p}(\Omega;\mathbb H)}\le C(T,Q,\varPsi,\lambda,p)\log^{-\kappa_1}\Big(\log\big(\delta t^{\frac 12}+\lambda_M^{-\frac 1 2}\big)^{-1}\Big).
\end{align}
\end{tm}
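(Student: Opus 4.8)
The plan is to mirror the one-dimensional argument of Theorem \ref{tm-1d}, feeding the error representation of Proposition \ref{prop-err-dec} (respectively Proposition \ref{prop-aux} in the additive case) into the truncated stochastic Gronwall machinery, but now invoking the \emph{logarithmic} version Proposition \ref{prop-log-gron} in place of the polynomial one, since in two dimensions the only available tail control on the truncating functional is logarithmic. First I would reduce the estimate of $\|u(t_n)-u_n\|_{L^{2p}(\Omega;\mathbb H)}$ to that of $\|P^M u(t_n)-u_n^M\|_{L^{2p}(\Omega;\mathbb H)}$ by the triangle inequality together with the spectral projection bound \eqref{strong-spe}. A crucial point is that in $d=2$ only the $\mathbb H^1$-norm of $u$ is bounded in $L^{2p}(\Omega)$ (cf. \eqref{pri-ene}), so I take $\bs_1=1$ in the projection term, which then contributes only $\lambda_M^{-1/2}$; this is precisely the origin of the $\lambda_M^{-1/2}$ in \eqref{con-2d}, even though Assumption \ref{add} or \ref{mul} with $\bs\ge 2$ is needed elsewhere for the $\widetilde U$-regularity.

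Next I would set $a_n=\|P^M u(t_n)-u_n^M\|^2$, $\sigma_1=\sigma'=\sigma=1$, and $\epsilon_N\sim O(\delta t^{1/2}+\lambda_M^{-1/2})$, and check that \eqref{err-for-mul} has exactly the structure demanded by Lemma \ref{rough-gronwall} (with the stochastic-integral term producing the $C'$-contribution over $(\Omega^{[r]}_{R_1})^c$). The key local check is that on the truncation set $\Omega_{R_1}^{n+1}$ from \eqref{trun-seq} the cubic nonlinearity is controlled by a polynomial $R_1^{2\sigma'}$ in $R_1$: since the truncating functional bounds $(1+\|u\|_{\mathbb H^1})(1+\log(1+\|u\|_{\mathbb H^2}^2))$, the critical Sobolev interpolation of Lemma \ref{cri-sob} converts this into the $L^\infty$ control $\|u\|_{L^\infty}^2\lesssim R_1$ needed for $f$, and likewise for $u_{[s]}^M$. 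The uniform bound $\sup_n\|a_n\|_{L^{q_0}(\Omega)}\le C(q_0)$ required by Lemma \ref{rough-gronwall} follows from Propositions \ref{d=2-h2}, \ref{Sta1} and \ref{cor-high}.

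The decisive ingredient is the tail bound $\mathbb P((\Omega_{R_1}^N)^c)\le C_1\log^{-p_1}(R_1)$ for arbitrarily large $p_1$. Writing the truncating functional as $(1+\|u\|_{\mathbb H^1})\exp(\widetilde U(u))$ with $\widetilde U(u)=\log(1+\log(1+\|\Delta u\|^2))$ and splitting $\{\,(1+\|u\|_{\mathbb H^1})e^{\widetilde U}\ge R_1\,\}\subset\{\|u\|_{\mathbb H^1}\ge\sqrt{R_1}-1\}\cup\{\widetilde U\ge\frac12\log R_1\}$, the first event has at least a polynomial (indeed exponential, in the multiplicative case) tail by \eqref{poly-hs}/\eqref{exp-h1}, while the second is estimated by the Chebyshev inequality and the $\widetilde U$-moment bound of Proposition \ref{d=2-h2}, giving $\mathbb P(\widetilde U\ge\frac12\log R_1)\le C\log^{-p_1}(R_1)$; the numerical part is treated identically via \eqref{sup-pri-hs-2d} and \eqref{tail-est-un-2d}. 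Feeding this tail and $\epsilon_N$ into Proposition \ref{prop-log-gron} yields $\|a_n\|_{L^p(\Omega;\mathbb H)}\le C\log^{-p_1/(pl)}(\log((\epsilon_N+\epsilon_N^{1/2})^{-1})-\log(\kappa C(T)))$. Since $\epsilon_N+\epsilon_N^{1/2}\sim\epsilon_N^{1/2}$ for small $\epsilon_N$, the inner argument is comparable to $\log((\delta t^{1/2}+\lambda_M^{-1/2})^{-1})$; taking square roots (as $a_n=\|\cdot\|^2$) and using the freedom to choose $p_1$ arbitrarily large to absorb the factor $1/(2pl)$ then establishes \eqref{con-2d} for every $\kappa_1\ge 1$.

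The hard part will be the interplay, absent in one dimension, between the genuinely weak regularity control available in $d=2$ and the Gronwall blow-up. Because the $\mathbb H^2$-bound exists only through the doubly-logarithmic functional $\widetilde U$ and not in any moment of $\|u\|_{\mathbb H^2}$, the complement of the truncation set decays merely like $\log^{-p_1}(R_1)$, while the recursive constant in Lemma \ref{rough-gronwall} grows like $\exp(C(T)R_1^{2\sigma_1})$. Balancing these two through the choice $R_1\sim[\log((\epsilon_N+\epsilon_N^{1/2})^{-1})]^{1/(2\sigma_1)}$ is what forces the doubly-logarithmic convergence rate, and verifying that the nonlinear and stochastic error terms of \eqref{err-for-mul} remain compatible with this delicate choice of $R_1$ (in particular that $R_1^{2\sigma'}\delta t$ and $R_1^{2\sigma'}\lambda_M^{-\bs}$ stay subordinate to $\epsilon_N$ after the optimization) is the step requiring the most care.
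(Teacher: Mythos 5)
Your proposal is correct and follows essentially the same route as the paper: reduce to $P^Mu(t_n)-u_n^M$ via \eqref{strong-spe}, feed the error decomposition of Proposition \ref{prop-err-dec} (resp.\ Proposition \ref{prop-aux}) into the truncated stochastic Gronwall Lemma \ref{rough-gronwall} with $\sigma_1=\sigma'=1$ and $\epsilon_N\sim R_1^{2\sigma'}O(\delta t^{1/2}+\lambda_M^{-1/2})$, and invoke the logarithmic version Proposition \ref{prop-log-gron} together with the tail estimates \eqref{2d-exp-lin} and \eqref{tail-est-un-2d} to obtain the doubly-logarithmic rate. The only slip is cosmetic: Lemma \ref{cri-sob} with the truncation \eqref{trun-seq} gives $\|u\|_{L^{\infty}}\lesssim R_1$ (hence $\|u\|_{L^\infty}^{2\sigma}\lesssim R_1^{2\sigma'}$ with $\sigma'=\sigma$), not $\|u\|_{L^{\infty}}^2\lesssim R_1$, but you use the correct exponent $\sigma_1=1$ in the Gronwall step anyway.
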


\begin{proof}
In the additive noise case, applying Lemma \ref{rough-gronwall} and Proposition \ref{prop-log-gron}  to \eqref{err-for-add} 
with $a_n=\|u(t_n)-u_n\|$, $\bs=1,$ $C'=0$, $\sigma_1=\sigma'$, $\epsilon_N\sim R_1^{2\sigma'} O(\delta t^{\frac 12}+\lambda_M^{-\frac 1 2})$, and using the tail estimates  \eqref{2d-exp-lin} and \eqref{tail-est-un-2d}  yield \eqref{con-2d}.
In the multiplicative noise case, applying Lemma \ref{rough-gronwall} and Proposition \ref{poly-gron} to \eqref{prop-log-gron}, taking $a_n=\|u(t_n)-u_n\|^2$, $\bs_1=\bs=1$ and $\epsilon_N\sim R_1^{2\sigma'} O(\delta t^{\frac 1 2}+\lambda_M^{-\frac 1 2}), \sigma_1=\sigma'$, using \eqref{2d-exp-lin} and \eqref{tail-est-un-2d}, Propositions \ref{d=2-h2}, \ref{Sta1} and \ref{cor-high}, lead to \eqref{1d-err-add}.
\end{proof}

At last, we present the convergence of \eqref{spl} for Eq. \eqref{SCSE} with random coefficients on a compact Riemannian manifold in $d\ge 2$ without boundary.

\begin{tm}\label{3d-tm}
Let $d\ge 2, \lambda=-1,\sigma=1$, $p\in \mathbb N^+,$ $\bs > \frac d2, \bs\in \mathbb N^+$,  $\varPsi\in \mathbb H^{\bs}$, $\mathcal V(\cdot)\in W^{\bs,\infty}$ and \eqref{con-spa-tim} hold. 
Suppose that the scheme \eqref{spl} satisfies  $\lim\limits_{R_2 \to \infty}\lim\limits_{N\to\infty} \mathbb P((\widetilde \Omega_{R_2}^N)^c) =0,$ where 
\begin{align*}
\widetilde \Omega_{R_2}^{n}:=\Big\{\sup\limits_{s\in [0,t_{n}]} \|u_{[s]}^M\|_{\mathbb H^{\bs}} \le R_2, 
\Big\},\; R_2\in \mathbb R^+, \; n\le N.
\end{align*}
Then the numerical solution is strongly convergent to the solution of \eqref{SCSE}, i.e.,
\begin{align*}
\lim_{N\to \infty}\sup_{n\le N}\|u(t_n)-u_n\|_{L^{2p}(\Omega;\mathbb H)}=0.
\end{align*} 
\end{tm}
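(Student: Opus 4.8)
The plan is to deduce convergence from the qualitative form of the truncated stochastic Gronwall lemma, namely Corollary \ref{str-con}, applied to the error sequence $a_n:=\|u(t_n)-u_n^M\|$ in $\mathbb E=\mathbb H$. Three ingredients must be assembled: a uniform $L^{q_0}$-bound on $a_n$, a family of truncation sets increasing in $R_1$ and decreasing in $n$ whose complements vanish in the iterated limit, and a one-step error recursion of the form required by Lemma \ref{rough-gronwall} with residual $\epsilon_N\to0$.

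First I would establish the uniform boundedness of $a_n$. Since \eqref{SCSE} has $\lambda=-1$ and the random forcing enters through the purely imaginary term $g(u)B=\bi u\,\mathcal V\beta$ with $\mathcal V$ real-valued, a direct computation of $\frac{d}{dt}\|u\|^2$ shows that all three contributions $\bi\Delta u$, $\bi\lambda|u|^2u$ and $\bi u\mathcal V\beta$ are orthogonal to $u$ in $\mathbb H$, so the $L^2$-mass is conserved pathwise and $\|u(t_n)\|=\|\varPsi\|$ for every $n$. The same structure of the stochastic subsystem \eqref{sub-sto} (with $\alpha=0$) together with the unitarity of the free flow $\Phi_D$ shows that the scheme conserves mass as well, giving \eqref{mass-decay} and hence $\|u_n^M\|\le\|\varPsi\|$. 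Thus $a_n\le 2\|\varPsi\|$ deterministically, so $\sup_{n\le N}\|a_n\|_{L^{q_0}(\Omega;\mathbb H)}\le 2\|\varPsi\|$ for all $q_0$, which verifies the standing hypothesis of Lemma \ref{rough-gronwall}. Next I would introduce the combined truncation sets
\[
\Omega_{R_1}^n:=\Big\{\sup_{s\in[0,t_n]}\big(\|u(s)\|_{\mathbb H^{\bs}}+\|u_{[s]}^M\|_{\mathbb H^{\bs}}\big)\le R_1\Big\},
\]
which are increasing in $R_1$ and decreasing in $n$. Splitting the complement as $(\Omega_{R_1}^N)^c\subseteq\{\sup_s\|u(s)\|_{\mathbb H^{\bs}}>R_1/2\}\cup(\widetilde\Omega_{R_1/2}^N)^c$ and using the pathwise global well-posedness of Proposition \ref{3d-prop} with the tail estimate of Corollary \ref{3d-cor} for the first event (which is independent of $N$) and the assumed numerical control $\lim_{R_2\to\infty}\lim_{N\to\infty}\mathbb P((\widetilde\Omega_{R_2}^N)^c)=0$ for the second, I obtain $\lim_{R_1\to\infty}\lim_{N\to\infty}\mathbb P((\Omega_{R_1}^N)^c)=0$.

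Then I would derive the error recursion on $\Omega_{R_1}^n$, following the error-decomposition strategy of Proposition \ref{prop-err-dec} but in the pathwise setting. Because the driving term is $B(t)\,dt=\mathcal V\beta(t)\,dt$ rather than an It\^o differential, every stochastic integral in the decomposition becomes an ordinary time integral against the continuous, pathwise bounded coefficient $\mathcal V\beta$; in particular the martingale-type terms $Err_3^n$, $Err_4^n$ collapse to Lebesgue integrals, so one may take $C'=0$ in Lemma \ref{rough-gronwall}. On $\Omega_{R_1}^n$ the $\mathbb H^{\bs}$-norms of $u$ and of $u_{[\cdot]}^M$ are bounded by $R_1$; using that $\mathbb H^{\bs}$ is an algebra for $\bs>\frac d2$, the Bourgain-space cubic estimate of Lemma \ref{lm-non-est}, the smoothing bounds \eqref{smo-semi}--\eqref{smo-spe}, and the unitarity of $S(\cdot)$, the accumulated part of the one-step splitting and projection error is controlled by $C(1+R_1^{2\sigma_1})\delta t$ with $\sigma_1\ge0$ (the local Lipschitz constant of the cubic term being of order $R_1^2$ through the embedding $\mathbb H^{\bs}\hookrightarrow L^{\infty}$), while the consistency and spectral residual contributes $\epsilon_N\sim C(1+R_1^{2\sigma_1})(\delta t^{1/2}+\lambda_M^{-\bs/2})$. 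Under \eqref{con-spa-tim} one has $\lambda_M\to\infty$ as $\delta t\to0$, so $\epsilon_N\to0$ for each fixed $R_1$.

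Finally, with these three facts established I would invoke Corollary \ref{str-con}: letting first $N\to\infty$ (so $\epsilon_N\to0$) and then $R_1\to\infty$ gives $\lim_{N\to\infty}\sup_{n\le N}\|u(t_n)-u_n\|_{L^{2p}(\Omega;\mathbb H)}=0$. The main obstacle is the construction of the pathwise error recursion in the third step: since no higher moment estimate of $\|u(\cdot)\|_{\mathbb H^{\bs}}$ or of $\|u_{[\cdot]}^M\|_{\mathbb H^{\bs}}$ is available (cf.\ the remark preceding Corollary \ref{3d-cor}), all nonlinear and commutator estimates must be localized to $\Omega_{R_1}^n$, and one must verify that the Lie--Trotter splitting consistency error, governed by the non-commutativity of $\bi\Delta$ with the cubic flow, is genuinely $o(1)$ after summation once it is measured through the Bourgain norm rather than through a fixed Sobolev moment. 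This localization is precisely what forces the use of the truncated Gronwall lemma in place of a classical one, and why only the rate-free conclusion of Corollary \ref{str-con} is accessible here.
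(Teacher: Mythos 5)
Your proposal follows essentially the same route as the paper's (sketched) proof: uniform bounds on the error sequence, a truncation set combining the pathwise regularity of the exact solution from Proposition \ref{3d-prop} and Corollary \ref{3d-cor} with the assumed control of $\widetilde\Omega_{R_2}^N$, a localized mild-formulation error recursion with residual $O(\delta t^{\min(1,\bs/2)}+\lambda_M^{-\bs/2})$, and then Corollary \ref{str-con} with $N\to\infty$ before $R_1\to\infty$. One point needs fixing: your truncation set $\Omega_{R_1}^n$ controls only the $\mathbb H^{\bs}$-norms of $u$ and $u_{[\cdot]}^M$, whereas the drift coefficient $\mathcal V\beta(t)$ is random and unbounded over $\Omega$, so the local Lipschitz constants in your recursion would be random variables rather than the deterministic $C(1+R_1^{2\sigma_1})$ required by Lemma \ref{rough-gronwall}; the paper therefore also includes $\sup_{s\in[0,t_n]}\|B(s)\|\le R_1$ in $\Omega_{R_1}^n$, which costs nothing in the limit since $\mathbb P(\sup_{[0,T]}|\beta|>R_1)\to 0$. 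With that addition your argument closes.
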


\begin{proof}
The proof is similar to that of Theorem \ref{tm-1d}. For the sake of simplicity,
we only present a sketch of this proof since the solution of Eq. \eqref{SCSE} with random coefficient is more regular than that of Eq. \eqref{SNLS}.
At first step, one could follow the steps in Section 4 to show that $\E \Big[\sup\limits_{t\in[0,T]}\|u(t)\|_{\mathbb H^1}^{2p}\Big]+\E \Big[\sup\limits_{n\le N} \|u_n^M\|_{\mathbb H^1}^{2p}\Big]<\infty$.
Second, similar to Proposition \ref{prop-aux}, by using the mild formulation of the numerical solution, one obtain the following error decomposition,
\begin{align*}
&\big\|P^Mu(t_{n+1})-u_{n+1}^M\big\|_{L^{2p}(\Omega_{R_1}^{n+1};\mathbb H)}\\\nonumber
&\le 
C(1+R_1^{2}) \int_{0}^{t_{n+1}}\|P^M u([s]_{\delta t})- \widetilde u_{[s]}\|_{L^{2p}(\Omega_{R_1}^{[s]};\mathbb H)} ds+
 C(1+R_1^{6})(\delta t^{\min(1,\frac \bs 2)}+\lambda_M^{-\frac {\bs}2}),
\end{align*}
where $\Omega_{R_1}^n:=\widetilde \Omega_{R_1}^n\cap\{\sup\limits_{s\in [0,t_n]} \|u(s)\|_{\mathbb H^{\bs}}\le R_1, \sup\limits_{s\in [0,t_n]}\|B(s)\|\le R_1 \}.$
Then applying the assumption $\lim\limits_{R_2 \to \infty}\lim\limits_{N\to \infty} \mathbb P((\widetilde \Omega_{R_2}^N)^c) =0$,  Proposition \ref{3d-prop}, Lemma \ref{rough-gronwall} and Corollary \ref{str-con} lead to the desired result.
\end{proof}

\begin{rk}
Based on our analysis, for a given numerical scheme for SNLSE in $d\ge 1$, the logarithmic tail estimate $\mathbb P((\Omega^N_{R_1})^c)\le C_1 \log^{-p_1}(R_1)$ implies the  double logarithmic convergence rate;  the polynomial tail estimate $\mathbb P((\Omega^N_{R_1})^c)\le C_1 R_1^{-\frac 1{p_1}}$ implies the logarithmic  convergence rate; and the exponential tail estimate implies the algebraic convergence rate. 
If there is no explicit decay rate for the tail estimate, we could use Corollary \ref{str-con} to show the strong convergence without an explicit rate. 
One can follow this approach to get the strong convergence result for a large class of SODEs and SPDEs with non-monotone coefficients.
\end{rk}

\bibliography{references}
\bibliographystyle{plain}

\section{Appendix}

\begin{proof}[Proof of Lemma \ref{cri-sob}]
Consider the Fourier series expansion $v=\sum\limits_{i}\<v,e_i\>e_i.$ 
The condition that $u\in \mathbb H^2$ leads to $\sum\limits_{i}\<u,e_i\>^2(1+\lambda_i^2)<\infty.$
Notice that 
$$\|u\|_{L^{\infty}}\le \|\sum\limits_{i}\<u,e_i\>e_i\|_{L^{\infty}}\le \sum_{i}|\<u,e_i\>|.$$
For $\kappa>0,$ according to the Weyl's law, the eigenvalue $\lambda_m \sim m^{\frac d2}$ with $d=2,$
 it holds that 
\begin{align*}
&\sum_{i}|\<u,e_i\>|\\
&=\sum_{\sqrt{|\lambda_i|}< \kappa}|\<u,e_i\>|+\sum_{\sqrt{|\lambda_i|}\ge \kappa}|\<u,e_i\>|\\
&\le \sum_{ \sqrt{|\lambda_i|}< \kappa}(1+\sqrt{|\lambda_i|})|\<u,e_i\>|\frac 1{1+\sqrt{|\lambda_i|}}+\sum_{\sqrt{|\lambda_i|}\ge \kappa}(1+|\lambda_i|)|\<u,e_i\>|\frac 1{1+|\lambda_i|}\\
&\le C\|u\|_{\mathbb H^1}\Big(\sum_{\sqrt{|\lambda_i|}< \kappa}\frac 1{1+|\lambda_i|}\Big)^{\frac 12}+C\|u\|_{\mathbb H^2}\Big(\sum_{\sqrt{|\lambda_i|}\ge\kappa}\frac 1{1+|\lambda_i|^2}\Big)^{\frac 12}\\
&\le C\|u\|_{\mathbb H^1}\Big(\sum_{\sqrt{|\lambda_i|}< \kappa}\frac 1{1+|\lambda_i|}\Big)^{\frac 12}+C\|u\|_{\mathbb H^2}(1+\kappa)^{-1}.
\end{align*}
Taking $\kappa= \|u\|_{\mathbb H^2},$ we complete the proof.
\end{proof}

\begin{proof}[Proof of Proposition \ref{cor-high}]
When $d=1,$ one can use the auxiliary functional $V(w)=\|(-\Delta)^{\frac \bs 2} w\|^2-\lambda \<(-\Delta)^{\bs -1} w, |w|^{2\sigma} w\>$ and follow the arguments in the proof of \cite[Theorem 2.1]{CHL16b} to show that 
\begin{align*}
\E \Big[\sup_{n\le N} \|u_n^M\|_{\mathbb H^{\bs}}^{2p}\Big]<\infty.
\end{align*}
Next, we focus on the case that $d=2$, $\sigma=1.$
Since $v_{D}^M$ preserving $\mathbb H^{\bs}$-norm for any $\bs \in \mathbb N$,
applying the It\^o formula to $\widetilde U^p(u_S^M(t))$ and using the integration by parts, we obtain
{\small
\begin{align*}
 &\widetilde U^p(u_{S}^M(t))\\
 &=\widetilde U^{p}(u_{S}^M(0))
+ \int_0^t p 2 \widetilde U^{p-1}(u_S^M (s)) \frac 1{1+\log(1+\|\Delta u_S^M (s)\|^2)}\frac 1{1+\|\Delta u_S^M (s)\|^2} \\
& \qquad \Big(\<\Delta u_S^M (s), \bi \lambda 2Re(\bar u_S^M (s) \Delta u_S^M (s))u_S^M (s) +\bi \lambda 4Re(\bar u_S^M (s) \nabla u_S^M (s))\nabla u_S^M (s)\\
&\qquad +\bi \lambda 2 |\nabla u_S^M (s)|^2 u_S^M (s)\>\Big)ds\\
&-\int_0^t p \widetilde U^{p-1}(u_S^M (s)) \frac 1{1+\log(1+\|\Delta u_S^M (s)\|^2)} \frac 1{1+\|\Delta u_S^M (s)\|^2}\\
&\qquad \sum_{i\in\mathbb N^+}\Big( \<\Delta u_S^M (s),2 |\nabla  Q^{\frac 12}e_i|^2  u_S^M (s) +2\nabla u_S^M (s) \nabla Q^{\frac 12} e_i e_i +\Delta u_S^M (s) |Q^{\frac 12}e_i|^2\\
&\qquad+2u_S^M (s)\Delta Q^{\frac 12}e_iQ^{\frac 12}e_i\> \Big)ds\\
&+\int_0^t 2p\widetilde  U^{p-1}(u_S^M (s)) \frac 1{1+\log(1+\|\Delta u_S^M (s)\|^2)}  \frac 1{1+\|\Delta u_S^M (s)\|^2}\< \Delta u_S^M (s),\bi \Delta (u_S^M (s)dW(s))\>\\
&+\int_0^t p \widetilde U^{p-1}(u_S^M (s)) \frac 1{1+\log(1+\|\Delta u_S^M (s)\|^2)} \frac 1{1+\|\Delta u_S^M (s)\|^2}\\ 
&\qquad \sum_{i\in \mathbb N^+} \Big( \|P^M \nabla u_S^M (s) \nabla Q^{\frac 12}e_i\|^2 +\|P^M \Delta u_S^M (s) Q^{\frac 12}e_i\|^2+\|P^M u_S^M (s) \Delta Q^{\frac 12}e_i\|^2\\
&\qquad +
2\< P^M u_S^M (s) \Delta Q^{\frac 12}e_i, \nabla u_S^M (s) \nabla Q^{\frac 12}e_i\>
+2\<P^M\nabla u_S^M (s)\nabla Q^{\frac 12}e_i,\Delta u_S^M (s) Q^{\frac 12}e_i\>\\
&\qquad 
+2\<P^M u_S^M (s) \Delta Q^{\frac 12}e_i,\Delta u_S^M (s) Q^{\frac 12}e_i\>\Big)ds\\
&+\int_0^t -2p \widetilde U^{p-1}(u_S^M (s))\frac 1{1+\log(1+\|\Delta u_S^M (s)\|^2)} \frac 1{(1+\|\Delta u_S^M (s)\|^2)^2}\\ 
&\qquad \Big(1+\frac 1{1+\log(1+\|\Delta u_S^M (s)\|^2)}
 -(p-1)\Big) 
\sum_{i\in \mathbb N^+}\Big(\<\Delta u_S^M (s), \bi \nabla u_S^M (s)\nabla Q^{\frac 12}e_i\>\\
&\qquad +\<\Delta u_S^M (s),\bi u_S^M (s) \Delta Q^{\frac 12}e_i\>\Big)^2ds.
 \end{align*}
 }
 Similar arguments as in the proof of Proposition \ref{d=2-h2} yield that  for a small $\epsilon\in (0,1),$ 
 \begin{align*}
 &\E \Big[\sup_{t\in [0,t_1]}\widetilde U^p(u_S^M(t)\Big]\\
 &\le \E [\widetilde U^{p}(u_S^M(0))]+\epsilon \E [\sup_{t\in [0,t_1]}\widetilde U^p(u_S^M(t)]+C(\epsilon)\int_0^{t_1} \E [\widetilde U^{p}(u_S^M(s))]ds\\
 &+C(\epsilon) \int_0^t \E \Big[ 1+\|u_S^M(s)\|^{2p} +\|\nabla u_S^M(s)\|^{2p} \Big]ds.
 \end{align*}
The Gronwall's inequality and  Proposition \ref{Sta1} yield that 
 \begin{align*}
 \E \Big[\sup_{t\in [0,T]} \widetilde U^p(u_S^M(t)\Big]\le C(T,Q,\varPsi,\lambda,\sigma,p),
 \end{align*}
 which completes the proof.
\end{proof}

\end{document}